
\documentclass[11pt,a4paper,twoside]{article}

\usepackage[T1]{fontenc}
\usepackage[cp1250]{inputenc} 
\usepackage{times}

\usepackage[leqno]{amsmath} 
\usepackage{amsthm,amsfonts,amssymb} 
\usepackage{bm} 

\input xy  \xyoption{all}

\usepackage{geometry}
\usepackage{color}
\usepackage{enumerate} 
\usepackage{hyperref} 

\theoremstyle{plain}
\newtheorem{theorem}{Theorem}[section]
\newtheorem{lemma}[theorem]{Lemma}
\theoremstyle{definition}
\newtheorem{definition}[theorem]{Definition}
\newtheorem{example}[theorem]{Example}
\newtheorem{remmark}[theorem]{Remark}
\newtheorem{corollary}[theorem]{Corollary}

\newcommand{\reftext}[1]{#1}

\pagestyle{myheadings}
\markboth{M. J\'{o}\'{z}wikowski, M. Rotkiewicz}{A note on actions of some monoids}
\geometry{a4paper,pdftex,tmargin=3cm,bmargin=3cm,footskip=1cm,lmargin=25mm,rmargin=25mm,twoside}
\setlength{\parindent}{15pt} 

\hyphenation{Rot-kie-wicz}

\title{A note on actions of some monoids\footnote{This research was supported by  the  Polish National Science Centre grant under the contract number DEC-2012/06/A/ST1/00256. Published in Differential Geometry and Its Applications \textbf{47} (2016) pp. 212--245 (doi: 10.1016/j.difgeo.2016.04.003).
}}
\author{
Micha\l\ J\'{o}\'{z}wikowski\footnote{\emph{Institute of Mathematics. Polish Academy of Sciences. \'{S}niadeckich 8, 00-656 Warsaw, Poland} (email: \texttt{mjozwikowski@gmail.com})}\ \ and
Miko\l aj Rotkiewicz\footnote{\emph{Faculty of Mathematics, Informatics and Mechanics. University of Warsaw.  Banacha 2, 02-097 Warsaw, Poland} (email: \texttt{mrotkiew@mimuw.edu.pl})},
}
\date{\today}

\begin{document}
\maketitle
\begin{abstract} 
Smooth actions of the multiplicative monoid $(\mathbb{R},\cdot)$ of
real numbers on manifolds lead to an alternative, and for some reasons
simpler, definitions of a vector bundle, a double vector bundle and
related structures like a graded bundle (Grabowski and Rotkiewicz
(2011) \cite{JG_MR_gr_bund_hgm_str_2011}). For these reasons it is natural to study
smooth actions of certain monoids closely related with the monoid
$(\mathbb{R},\cdot)$. Namely, we discuss geometric structures
naturally related with: smooth and holomorphic actions of the monoid of
multiplicative complex numbers, smooth actions of the monoid of second
jets of punctured maps $(\mathbb{R},0)\rightarrow(\mathbb{R},0)$,
smooth actions of the monoid of real 2 by 2 matrices and smooth actions
of the multiplicative reals on a supermanifold. In particular cases we
recover the notions of a holomorphic vector bundle, a complex vector
bundle and a non-negatively graded manifold.
\end{abstract}
%
%
%
%

\paragraph{MSC 2010:} 
57S25 (primary), 32L05, 58A20, 58A50 (secondary)

\paragraph{Keywords:} 
Monoid action,Graded bundle, Graded manifold, Homogeneity
structure, Holomorphic bundle, Supermanifold
%
%

\section{Introduction}

\paragraph*{Motivation}
Our main motivation to undertake this study were the results of
Grabowski and Rotkiewicz \cite{JG_MR_higher_vec_bndls_and_multi_gr_sym_mnflds,JG_MR_gr_bund_hgm_str_2011}
concerning action of the multiplicative monoid of real numbers
$(\mathbb{R},\cdot)$ on smooth manifolds. In the first of the cited
papers the authors effectively characterized these smooth actions of
$(\mathbb{R},\cdot)$ on a manifold $M$ which come from homotheties of
a vector bundle structure on~$M$. In particular, it turned out that the
addition on a vector bundle is completely determined by the
multiplication by reals (yet the smoothness of this multiplication at
$0\in\mathbb{R}$ is essential). This, in turn, allowed for a
simplified and very elegant treatment of double and multiple vector bundles.

These considerations were further generalized in~\cite{JG_MR_gr_bund_hgm_str_2011}. The main result of that paper (here we
recall it as \reftext{Theorem~\ref{thm:eqiv_real}}) is an equivalence, in the
categorical sense, between smooth actions of $(\mathbb{R},\cdot)$ on
manifolds (\emph{homogeneity structures} in the
language of~\cite{JG_MR_gr_bund_hgm_str_2011}) and \emph{graded bundles}. Graded
bundles (introduced for the first time in \cite{JG_MR_gr_bund_hgm_str_2011}) can be viewed as a natural generalization
of vector bundles. In short, they are locally trivial fibered bundles
with fibers possessing a structure of a \emph{graded space}, i.e. a
manifold diffeomorphic to $\mathbb{R}^{n}$ with a distinguished class
of global coordinates with positive integer weights assigned. In a
special case when these weights are all equal to 1, a graded space
becomes a standard vector space and a graded bundle -- a vector bundle.

Surprisingly, graded bundles gained much more attention in
supergeometry, where they are called $N$-manifolds. One of the reasons
is that various important objects in mathematical physics can be seen
as $N$-manifolds equipped with an odd homological vector field. For
example, a Lie algebroid is a pair $(E, X)$ where $E$ is an
$N$-manifold of degree $1$, thus an anti-vector bundle, and $X$ is a
homological vector field on $E$ of weight~$1$. A~much deeper result
relates Courant algebroids and $N$-manifolds of degree $2$~\cite{Roytenberg}.

\paragraph*{Goals}
It is natural to ask about possible extensions of the results of
\cite{JG_MR_higher_vec_bndls_and_multi_gr_sym_mnflds,JG_MR_gr_bund_hgm_str_2011}
discussed above. There are two obvious directions of studies:
\begin{enumerate}[(Q1)]
\item[(Q1)] What are geometric structures naturally related with
smooth monoid actions on manifolds for monoids $\mathcal{G}$ other
than $(\mathbb{R},\cdot)$?
\item[(Q2)] How to characterize smooth actions of the multiplicative
reals $(\mathbb{R},\cdot)$ on supermanifolds?
\end{enumerate}
In this paper we provide answers to the above problems. Of course it is
hopeless to discuss (Q1) for an arbitrary monoid $\mathcal{G}$.
Therefore we concentrate our attention on several special cases, all
being natural generalizations of the monoid $(\mathbb{R},\cdot)$ of the
multiplicative reals:
\begin{itemize}
\item $\mathcal{G}=(\mathbb{C},\cdot)$ is the multiplicative monoid
of complex numbers;
\item $\mathcal{G}=\mathcal{G}_{2}$ is the monoid of the 2nd-jets
of punctured maps $\gamma:(\mathbb{R},0)\rightarrow
(\mathbb{R},0)$ (note that $(\mathbb{R},\cdot)$ can be viewed as the
monoid of the 1st-jets of such maps);
\item $\mathcal{G}=\operatorname{M}_{2}(\mathbb{R})$ is the monoid
of 2 by 2 real matrices.
\end{itemize}
Observe that all these examples contain $(\mathbb{R},\cdot)$ as a
submonoid. Therefore, by the results of \cite{JG_MR_gr_bund_hgm_str_2011}, every manifold with a smooth $\mathcal
{G}$-action will be canonically a graded bundle. This fact will be
often of crucial importance in our analysis.

\paragraph*{Main results} Below we list the most important results of
this paper regarding problem (Q1):
\begin{itemize}
\item For holomorphic actions of $\mathcal{G}=(\mathbb{C},\cdot)$ we
proved \reftext{Theorem~\ref{thm:equiv_holom}}, a direct analog of
\reftext{Theorem~\ref{thm:eqiv_real}}. Such actions (\emph{holomorphic homogeneity
structures} -- see \reftext{Definition~\ref{def:hgm_str_complex}}) are
equivalent (in the categorical sense) to \emph{holomorphic graded
bundles} (see \reftext{Definition~\ref{def:cplx_gr_bundle}}) -- a natural
extension of the notion of a (real) graded bundle to the holomorphic setting.
\item \reftext{Theorem~\ref{thm:equiv_complex}} is another analog of \reftext{Theorem~\ref{thm:eqiv_real}}. It characterizes \emph{complex graded bundles}
(defined analogously to the graded bundles in the real case -- see
\reftext{Definition~\ref{def:cplx_gr_bundle}}) in terms of \emph{complex
homogeneity structures} (i.e., smooth actions of $(\mathbb{C},\cdot)$
-- see \reftext{Definition~\ref{def:hgm_str_complex}}). It turns out that
complex graded bundles are equivalent (in the categorical sense) to a
special class of \emph{nice} complex homogeneity structures, i.e.
smooth $(\mathbb{C},\cdot)$-actions in which the imaginary part is in
a natural sense compatible with the action of $(\mathbb{R},\cdot
)\subset(\mathbb{C},\cdot)$ -- cf. \reftext{Definition~\ref{def:nice_hgm_str}}.
\item $\mathcal{G}_{2}$-actions on smooth manifolds are the main topic
of Section~\ref{sec:g2}. Since $\mathcal{G}_{2}$ is non-Abelian we
have to distinguish between left and right actions of $\mathcal
{G}_{2}$. As already mentioned, since $(\mathbb{R},\cdot)\subset
\mathcal{G}_{2}$, any manifold with $\mathcal{G}_{2}$-action is
naturally a (real) graded bundle. A crucial observation is that
$\mathcal{G}_{2}$ contains a group of additive reals $(\mathbb{R},+)$
as a submonoid. This fact allows to relate with every smooth right
(resp., left) $\mathcal{G}_{2}$-action a canonical complete vector
field (note that a smooth action of $(\mathbb{R},+)$ is a flow) of
weight $-$1 (resp., $+$1) with respect to the above-mentioned graded bundle
structure (\reftext{Lemma~\ref{lem:G2_actions_infinitesimally}}).

Unfortunately, the characterization of manifolds with a smooth $\mathcal
{G}_{2}$-action as graded bundles equipped with a weight $\pm1$
vector field is not complete. In general, such a data allows only to
define the action of the group of invertible elements ${\mathcal
{G}}^{\text{inv}}_{2}\subset\mathcal{G}_{2}$ on the considered
manifold (\reftext{Lemma~\ref{lem:G_2_two}}), still leaving open the problem of
extending such an action to the whole $\mathcal{G}_{2}$ in a smooth
way. For right $\mathcal{G}_{2}$-actions this question can be locally
answered for each particular case.
In \reftext{Lemma~\ref{lem:right_g2_deg_less_4}} we formulate such a result for graded bundles
of degree at most~3. The case of a left $\mathcal{G}_{2}$-action is
much more difficult and we were able to provide an answer (in an
elegant algebraic way) only for the case of graded bundles of degree
one (i.e., vector bundles) in \reftext{Lemma~\ref{lem:left_g2_action_vb}}.

\item As a natural application of our results about $\mathcal
{G}_{2}$-actions we were able to obtain a characterization of the
smooth actions of the monoid $\mathcal{G}$ of 2 by 2 real matrices in
\reftext{Lemma~\ref{lem:matrix}}. This is due to the fact that $\mathcal
{G}_{2}$ can be naturally embedded into $\mathcal{G}$. In the
considered case the action of $\mathcal{G}$ on a manifold provides it
with a double graded bundle structure together with a pair of vector
fields $X$ and $Y$ of bi-weights, respectively, $(-1,1)$ and $(1,-1)$
with respect to the bi-graded structure. Moreover, the commutator
$[X,Y]$ is related to the double graded structure on the manifold.
Unfortunately, this characterization suffers the same problems as the
one for $\mathcal{G}_{2}$-actions: not every structure of such type
comes from a $\mathcal{G}$-action.
\end{itemize}

Problem (Q2) is addressed in Section~\ref{sec:super} where we prove,
in \reftext{Theorem~\ref{thm:main_super}}, that supermanifolds equipped with a
smooth action of the monoid $(\mathbb{R}, \cdot)$ are \emph{graded
bundles in the category of supermanifold} in the sense of \reftext{Definition~\ref{def:s_grd_bndl}}
(the latter notion differs from the notion of an
$N$-manifold given in \cite{Roytenberg}: the parity of local
coordinates needs not to be equal to their weights modulo two). We are
aware that this result should be known to the experts. In \cite{Severa}
Severa states (without a proof) that ``an $N$-manifold (shorthand for
`non-negatively graded manifold') is a supermanifold with an action of
the multiplicative semigroup $(\mathbb{R}, \cdot)$ such that $-1$
acts as the parity operator'', which is a statement slightly weaker
than our result. Also recently we found a proof of a result similar to
Theorem~5.8 in \cite{BGG_grd_bndls_Lie_grpds},
Remark 2.2. Nevertheless, a rigorous proof of
\reftext{Theorem~\ref{thm:main_super}} seems to be missing in the literature (a version from
\cite{BGG_grd_bndls_Lie_grpds} is just a sketch). Therefore we decided
to provide it in this paper. It is worth to stress that our proof was
obtained completely independently to the one from \cite{BGG_grd_bndls_Lie_grpds}
and, unlike the latter, does not refer to the
proof of \reftext{Theorem~\ref{thm:eqiv_real}}.

\vspace*{-5pt}

\paragraph*{Literature}
Despite a vast literature on Lie theory for semi-groups (see e.g.,
\cite{HHL_Lie_semigroups,HH_Lie_semigroups}) we could not find
anything that deals with smooth actions of the monoid $(\mathbb{R},
\cdot)$ or its natural extensions. This can be caused by the fact that
the monoid $(\mathbb{R}, \cdot)$ is not embeddable to any group.

\vspace*{-5pt}

\paragraph*{Organization of the paper}
In Section~\ref{sec:perm} we briefly recall the main results of
 \cite{JG_MR_gr_bund_hgm_str_2011}, introducing (real) graded spaces, graded
bundles, homogeneity structures, as well as the related notions and
constructions. We also state \reftext{Theorem~\ref{thm:eqiv_real}} providing a
categorical equivalence between graded bundles and homogeneity
structures (i.e., smooth $(\mathbb{R},\cdot)$-actions). Later in this
section we introduce the monoid $\mathcal{G}_{2}$ and discuss its
basic properties.

Section~\ref{sec:complex} is devoted to the study of $(\mathbb
{C},\cdot)$-actions. Basing on analogous notions from Section~\ref{sec:perm} we define complex graded spaces, complex and holomorphic
graded bundles, as well as complex and holomorphic homogeneity
structures. Later we prove \reftext{Theorems~\ref{thm:equiv_holom} and \ref{thm:equiv_complex}} (these were already discussed above) providing
effective characterizations of holomorphic and complex graded bundles,
respectively, in terms of $(\mathbb{C},\cdot)$-actions.

The content of Sections~\ref{sec:g2} and \ref{sec:super} was
discussed in detail while presenting the main results of this paper.

\section{Preliminaries}
\label{sec:perm}
\paragraph*{Graded spaces}
We shall begin by introducing, after \cite{JG_MR_gr_bund_hgm_str_2011}, the notion of a (real) graded space.
Intuitively, a graded space is a manifold diffeomorphic to $\mathbb
{R}^{n}$ and equipped with an atlas of global graded coordinate
systems. That is, we choose coordinate functions with certain positive
integers (weights) assigned to them and consider transition functions
respecting these weights (they have to be polynomial in graded
coordinates). Thus a graded space can be understood as a natural
generalization of the notion of a vector space. Indeed, on a vector
space we can choose an atlas of global linear (weight one) coordinate
systems. Clearly, every passage between two such systems is realized by
a weight preserving (that is, linear) map. Below we provide a rigorous
definition of a graded space.

\begin{definition}\label{def:gr_space}
Let $\mathbf{d}= (d_{1}, \ldots, d_{k})$ be a sequence of
non-negative integers, let $I$ be a set of cardinality $|\mathbf
{d}|:=d_{1}+\ldots+d_{k}$, and let $I\ni\alpha\mapsto w^{\alpha}\in
\mathbb{Z}_{+}$ be a map such that $d_{i}= \#\{\alpha\in I: w^{\alpha}=i
\}$ for each $1\leq i\leq k$.

A \emph{graded space of rank $\mathbf{d}$} is a smooth manifold
$\mathrm{W}$ diffeomorphic to $\mathbb{R}^{|\mathbf{d}|}$ and
equipped with an equivalence class of graded coordinates. By
definition, a \emph{system of graded coordinates} on $\mathrm{W}$ is
a global coordinate system $(y^{a})_{a\in I}: \mathrm{W}\xrightarrow
{\simeq} \mathbb{R}^{|\mathbf{d}|}$ with \emph{weight} $w^{\alpha
}$ assigned to each function $y^{\alpha}$, $\alpha\in I$. To indicate
the presence of weights we shall sometimes write $y^{\alpha
}_{w^{\alpha}}$ instead of $y^{\alpha}$ and $\mathbf{w}(\alpha)$ to
denote the weight of $y^{\alpha}$.

Two systems of graded coordinates, $(y^{\alpha}_{w^{\alpha}})$ and
$(\underline{y}^{\alpha}_{\underline{w}^{\alpha}})$ are \emph
{equivalent} if there exist constants $c_{\alpha_{1} \ldots\alpha
_{j}}^{\alpha}\in\mathbb{R}$, defined for indices such that
$\underline{w}^{\alpha}=w^{\alpha_{1}} +\ldots+ w^{\alpha_{j}}$, satisfying
%
\begin{equation}\label{eqn:graded_transf_R}
\underline{y}^{\alpha}_{\underline{w}^{\alpha}} = \sum_{\substack
{j=1,2, \ldots\\ \underline{w}^{\alpha}=w^{\alpha_{1}}+\ldots
+w^{\alpha_{j}}}} c_{\alpha_{1} \ldots\alpha_{j}}^{\alpha
}y^{\alpha_{1}}_{w^{\alpha_{1}}} \ldots y^{\alpha_{k}}_{w^{\alpha_{k}}}.
\end{equation}
The highest coordinate weight (i.e., the highest number $i$ such that
$d_{i}\neq0$) is called the \emph{degree} of a graded space $\mathrm{W}$.

By a \emph{morphism between graded spaces} $\mathrm{W}_{1}$ and
$\mathrm{W}_{2}$ we understand a smooth map $\Phi:\mathrm
{W}_{1}\rightarrow\mathrm{W}_{2}$ which in some (and thus any) graded
coordinates writes as a polynomial homogeneous in weights $w^{\alpha}$.
\end{definition}

\begin{example} Consider a graded space $\mathrm{W}=\mathbb
{R}^{(2,1)}$ with coordinates $(x_{1},x_{2},y)$ of weights 1, 1 and 2,
respectively. A map $\Phi(x_{1},x_{2},y)=(3 x_{2},
x_{1}+2x_{2},y+x_{1}x_{2}+5(x_{2})^{2})$ is an automorphism of $\mathrm{W}$.
\end{example}

Observe that any graded space $\mathrm{W}$ induces an action
$h^{\mathrm{W}}: \mathbb{R}\times\mathrm{W}\rightarrow\mathrm{W}$
of the multiplicative monoid $(\mathbb{R}, \cdot)$ defined by
%
\begin{equation}
\label{eqn:hgm_structure}
h^{\mathrm{W}}(t, (y^{\alpha}_{w})) = (t^{w} \cdot y^{\alpha}_{w}).
\end{equation}
Indeed, it is straightforward to check that the formula for $h^{\mathrm
{W}}$ does not depend on the choice of graded coordinates $(y^{\alpha
}_{w})$ in a given equivalence class. We shall call $h^{\mathrm{W}}$
the action by \emph{homotheties} of $\mathrm{W}$. We will also use
notation $h^{\mathrm{W}}_{t}(\cdot)$ instead of $h^{\mathrm
{W}}(t,\cdot)$. The multiplicativity of $h^{\mathrm{W}}$ reads as
$h^{\mathrm{W}}_{s}\circ h^{\mathrm{W}}_{t}=h^{\mathrm{W}}_{t\cdot
s}$ for every $t,s\in\mathbb{R}$.

Obviously a morphism $\Phi:\mathrm{W}_{1}\rightarrow\mathrm{W}_{2}$
between two graded spaces intertwines the actions $h^{\mathrm{W}_{1}}$
and $h^{\mathrm{W}_{2}}$, that is
\[
h^{\mathrm{W}_{2}}_{t}(\Phi(v))=\Phi(h^{\mathrm{W}_{1}}_{t}(v))
\]
for every $v\in\mathrm{W}_{1}$ and every $t\in\mathbb{R}$. For
degree $1$ graded spaces we recover the notion of a linear map \cite
{JG_MR_higher_vec_bndls_and_multi_gr_sym_mnflds}.

\begin{example} The graded spaces $W_{1}= \mathbb{R}^{(1,0)}$ and
$W_{2}=\mathbb{R}^{(0,1)}$ are different although their underlying
manifolds are the same. Indeed, there is no diffeomorphism $f:\mathbb
{R}\to\mathbb{R}$ such that $f(t x) = t^{2} f(x)$ for every $t, x\in
\mathbb{R}$, that is one intertwining the associated actions of
$\mathbb{R}$ (cf. \reftext{Lemma~\ref{lem:hol_hmg_fun_R}}).
\end{example}

Using the above construction it is natural to introduce the following:
%
\begin{definition}\label{def:hgm_real}
A function $\phi: \mathrm{W}\rightarrow\mathbb{R}$ defined on a
graded space $\mathrm{W}$ is called \emph{homogeneous of weight} $w$
if for every $v\in\mathrm{W}$ and every $t\in\mathbb{R}$
%
\begin{equation}\label{eqn:def_hmg_fun_R}
\phi(h^{\mathrm{W}}(t, v)) = t^{w} \, \phi(v)\ .
\end{equation}

In a similar manner one can associate weights to other geometrical
objects on $\mathrm{W}$. For example a smooth vector field $X$ on $\mathrm
{W}$ is called \emph{homogeneous of weight $w$} if for every $v\in
\mathrm{W}$ and every $t>0$
%
\begin{equation}
\label{eqn:def_hgm_vf}
(h_{t})_{\ast}X(v)=t^{-w} X(h_{t}(v))\ .
\end{equation}
\end{definition}

We see that the coordinate functions $y^{\alpha}_{w}$ are functions of
weight $w$ and the field $\partial_{y^{\alpha}_{w}}$ is of weight
$-w$ in the sense of the above definition. In fact it can be proved that

\begin{lemma}[\cite{JG_MR_gr_bund_hgm_str_2011}]\label{lem:hol_hmg_fun_R}
Any homogeneous function on a graded space $\mathrm
{W}$ is a polynomial in the coordinate functions $y^{\alpha}_{w}$
homogeneous in weights~$w$.
\end{lemma}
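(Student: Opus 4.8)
The plan is to show that a homogeneous function $\phi$ of weight $w$ on $\mathrm{W}\cong\mathbb{R}^{|\mathbf d|}$ must be a weight-homogeneous polynomial in the graded coordinates $(y^\alpha_{w^\alpha})$. First I would fix a graded coordinate system $(y^\alpha)_{\alpha\in I}$ and regard $\phi$ as a smooth function of these variables. The key is the defining identity $\phi(h^{\mathrm{W}}(t,v))=t^w\phi(v)$ for \emph{all} $t\in\mathbb{R}$, in particular for $t$ near $0$; restricting to $v$ fixed, the map $t\mapsto \phi(t^{w^1}y^1,\dots,t^{w^n}y^n)$ is smooth on all of $\mathbb{R}$ and equals $t^w\phi(v)$. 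I would then Taylor-expand $\phi$ around the origin (which is the fixed point $h^{\mathrm{W}}(0,\cdot)$ of the action) to finite order $N>w$ with remainder: $\phi(y)=\sum_{|\beta|\le N}c_\beta\, y^\beta + R_N(y)$, where $R_N$ vanishes to order $N+1$ at $0$.

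Next I would substitute $y\mapsto h^{\mathrm W}_t(y)=(t^{w^\alpha}y^\alpha)$ into this expansion. Each monomial $y^\beta=\prod_\alpha (y^\alpha)^{\beta_\alpha}$ acquires a factor $t^{\langle \mathbf w,\beta\rangle}$ where $\langle\mathbf w,\beta\rangle=\sum_\alpha w^\alpha\beta_\alpha$ is its \emph{weighted} degree, and the remainder term $R_N(h^{\mathrm W}_t(y))$ is $O(t^{N+1})$ as $t\to 0$ uniformly on compacta (since $|h^{\mathrm W}_t(y)|\le |t|\cdot(\text{stuff})$ for $|t|\le1$, using $w^\alpha\ge1$). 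Comparing with $t^w\phi(y)=t^w\sum c_\beta y^\beta + t^w R_N(y)$ and matching powers of $t$ in this polynomial-plus-$o(t^N)$ identity: every monomial $y^\beta$ with weighted degree $\langle\mathbf w,\beta\rangle\ne w$ must have $c_\beta=0$, and the surviving terms are exactly those of weighted degree $w$. This forces $\phi(y)=\sum_{\langle\mathbf w,\beta\rangle=w}c_\beta y^\beta$ plus a remainder $\widetilde R(y)$ which is both $o(|y|^N)$-flat at $0$ and satisfies the same homogeneity $\widetilde R(h^{\mathrm W}_t y)=t^w\widetilde R(y)$.

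The final step is to kill the remainder: $\widetilde R$ is homogeneous of weight $w$ but vanishes to order $N+1>w$ at the origin. Evaluating $\widetilde R(h^{\mathrm W}_t y)=t^w\widetilde R(y)$ and using that $\widetilde R(z)=O(|z|^{N+1})$ while $|h^{\mathrm W}_t y|$ can be taken $\ge |t|^k$-small only in a controlled way, I get $|t^w\widetilde R(y)|=|\widetilde R(h^{\mathrm W}_t y)|\le C|h^{\mathrm W}_t y|^{N+1}\le C' |t|^{N+1}$ for $|t|\le 1$ (the worst scaling of each coordinate is $|t|^1$), so $\widetilde R(y)=O(|t|^{N+1-w})\to0$ as $t\to1$... more carefully, dividing by $t^w$ and letting $t\to0$ shows $\widetilde R(y)\equiv 0$ since $N+1-w\ge1$. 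Hence $\phi$ is the weight-$w$-homogeneous polynomial $\sum_{\langle\mathbf w,\beta\rangle=w}c_\beta y^\beta$, as claimed; coordinate-independence of this conclusion follows from the polynomial (weight-preserving) form of the transition maps \eqref{eqn:graded_transf_R}.

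The main obstacle is the bookkeeping with the remainder term: one must choose the Taylor order $N$ large enough relative to $w$, verify the uniform $O(t^{N+1})$ decay of the remainder under the (non-isotropic, but still contracting for $|t|\le1$) scaling $h^{\mathrm W}_t$, and then argue that a weight-$w$-homogeneous function flat to high order at the fixed point is identically zero — this is where the hypothesis that the action extends smoothly through $t=0$ (so that $0$ is genuinely a fixed point and the expansion is legitimate) is essential.
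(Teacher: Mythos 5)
The paper does not prove this lemma at all --- it is imported from \cite{JG_MR_gr_bund_hgm_str_2011} with no argument given --- so there is no in-paper proof to compare against; your Taylor-expansion argument at the fixed point $0=h^{\mathrm W}_0(\mathrm W)$, with matching of powers of $t$ in the homogeneity identity, is the standard route and is correct in substance. Two remarks. First, your last step is redundant: once you know that $t^w\phi(y)-\sum_{\langle\mathbf w,\beta\rangle\le N}c_\beta t^{\langle\mathbf w,\beta\rangle}y^\beta$ is a polynomial in $t$ of degree at most $N$ which is $O(t^{N+1})$ for each fixed $y$, it vanishes identically, and reading off the coefficient of $t^w$ already yields $\phi(y)=\sum_{\langle\mathbf w,\beta\rangle=w}c_\beta y^\beta$ with no leftover $\widetilde R$ to dispose of. Second, if you do keep $\widetilde R$, the claim that it vanishes to Euclidean order $N+1$ at the origin is not quite right when some weights exceed $1$: $\widetilde R$ still contains monomials $c_\beta y^\beta$ with $|\beta|\le N$ but weighted degree $\langle\mathbf w,\beta\rangle>N$, and these are only $O(|y|^{|\beta|})$. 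The estimate that actually matters, namely $\widetilde R(h^{\mathrm W}_t y)=O(t^{N+1})$ for fixed $y$, does hold (each such monomial picks up $t^{\langle\mathbf w,\beta\rangle}$ with exponent $>N$, and the genuine Taylor remainder contributes $O(t^{N+1})$ because $|h^{\mathrm W}_t y|=O(|t|)$ for $|t|\le1$ as all $w^\alpha\ge1$), so dividing by $t^w$ and letting $t\to0$ still forces $\widetilde R\equiv0$. You are also right to single out smoothness of the action through $t=0$ as the essential hypothesis; without it the statement fails (e.g.\ $f(ty)=t^2f(y)$ for $t>0$ admits non-polynomial solutions).
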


\paragraph*{Graded bundles and homogeneity structures}
Since, as indicated above, a graded space can be seen as a
generalization of the notion of a vector space, it is natural to define
\emph{graded bundles} per analogy to vector bundles. A graded bundle
is just a fiber-bundle with the typical fiber being a graded space and
with transition maps respecting the graded space structure on fibers.

\begin{definition} A \emph{graded bundle of rank $\mathbf{d}$} is a
smooth fiber bundle $\tau: E\to M$ over a real smooth manifold $M$
with the typical fiber $\mathbb{R}^{\mathbf{d}}$ considered as a
graded space of rank $\mathbf{d}$. Equivalently, $\tau$ admits local
trivializations $\psi_{U}: \tau^{-1}(U)\to U \times\mathbb
{R}^{\mathbf{d}}$ such that transition maps $g_{UU'}(q):= \psi
_{U'}\circ\psi_{U}^{-1}|_{\{q\}\times\mathbb{R}^{\mathbf{d}}}:
\mathbb{R}^{\mathbf{d}}\to\mathbb{R}^{\mathbf{d}}$ are isomorphism
of graded spaces smoothly depending on $q\in U\cap U'$. By a \emph
{degree} of a graded bundle we shall understand the degree of the
typical fiber $\mathbb{R}^{\mathbf{d}}$.

A \emph{morphism of graded bundles} is defined as a fiber-bundle
morphism being a graded space morphism on fibers. Clearly, graded
bundles together with their morphisms form a \emph{category}.
\end{definition}

\begin{example}\label{ex:TkM}
A canonical example of a graded bundle is provided by the concept of a
\emph{higher tangent bundle}. Let $M$ be a smooth manifold and let
$\gamma,\delta:(-\varepsilon,\varepsilon)\rightarrow M$ be two
smooth curves on $M$. We say that $\gamma$ and $\delta$ have the same
$k$th-\emph{jet at} $0$ if, for every smooth function
$\phi:M\rightarrow\mathbb{R}$, the difference $\phi\circ\gamma
-\phi\circ\delta:(-\varepsilon,\varepsilon)\rightarrow\mathbb
{R}$ vanishes at $0$ up to order $k$. Equivalently, in any local
coordinate system on $M$ the Taylor expansions of $\gamma$ and $\delta
$ agree at $0$ up to order $k$. The $k$th-jet of $\gamma$
at $0$ shall be denoted by $\mathbf{t}^{k}\gamma(0)$. As a set the
$k$th-order tangent bundle $\mathrm{T}^{k}M$ consists of
all $k$th-jets of curves on $M$. It is naturally a bundle
over $M$ with the projection $\mathbf{t}^{k}\gamma(0)\mapsto\gamma
(0)$. It also has a natural structure of a smooth manifold and a graded
bundle of rank $(\underbrace{m,m,\ldots,m}_{k})$ with $m=\dim M$.
Indeed, given a local coordinate system $(x^{i})$ with $
i=1,\ldots, m$ on $M$ we define the so-called \emph{adapted
coordinate system} $(x^{i,(\alpha)})$ on $\mathrm{T}^{k}M$ with $i=
1,\ldots,m$, $\alpha=0,1,\ldots,k$ via the formula
\[
x^{i}(\gamma(t))=x^{i,(0)}(\mathbf{t}^{k}\gamma(0))+t\cdot
x^{i,(1)}(\mathbf{t}^{k}\gamma(0))+\ldots+\frac
{t^{k}}{k!}x^{i,(k)}(\mathbf{t}^{k}\gamma(0))+o(t^{k})\ .
\]
That is, $x^{i,(\alpha)}$ at $\mathbf{t}^{k}\gamma(0)$ is the
$\alpha$th-coefficient of the Taylor expansion of
$x^{i}(\gamma(t))$. We can assign weight $\alpha$ to the coordinate
$x^{i,(\alpha)}$. It is easy to check that a smooth change of local
coordinates on $M$ induces a change of the adapted coordinates on
$\mathrm{T}^{k}M$ which respects this grading.
\end{example}

Note that every graded bundle $\tau:E\rightarrow M$ induces a smooth
action $h^{E}:\mathbb{R}\times E\rightarrow E$ of the multiplicative
monoid $(\mathbb{R},\cdot)$ defined fiber-wise by the canonical
actions $h^{\mathrm{V}}$ given by \reftext{\eqref{eqn:hgm_structure}} with
$\mathrm{V}=\tau^{-1}(p)$ for $p\in M$. We shall refer to this as to
the action by \emph{homotheties of $E$}.

Clearly, $M=h^{E}_{0}(E)$ and any graded bundle morphisms $\Phi
:E_{1}\rightarrow E_{2}$ intertwine the actions $h^{E_{1}}$ and
$h^{E_{2}}$, i.e.,
\[
\Phi(h^{E_{1}}_{t}(e))=h^{E_{2}}_{t}(\Phi(e)),
\]
for every $e\in E_{1}$ and every $t\in\mathbb{R}$.

The above construction justifies the following:

\begin{definition}\label{def:hgm_str}
A \emph{homogeneity structure} on a manifold $E$ is a smooth action of
the multiplicative monoid $(\mathbb{R},\cdot)$
\[
h:\mathbb{R}\times E \longrightarrow E\ .
\]
A \emph{morphism} of two homogeneity structures $(E_{1},h^{1})$ and
$(E_{2},h^{2})$ is a smooth map $\Phi:E_{1}\rightarrow E_{2}$
intertwining the actions $h^{1}$ and $h^{2}$, i.e.,
\[
\Phi(h^{1}_{t}(e))=h^{2}_{t}(\Phi(e)),
\]
for every $e\in E_{1}$ and every $t\in\mathbb{R}$. Clearly,
homogeneity structures with their morphisms form a \emph{category}.

In the context of homogeneity structures we can also speak about \emph
{homogeneous functions} and \emph{homogeneous vector fields}. They are
defined analogously to the notions in \reftext{Definition~\ref{def:hgm_real}}.
\end{definition}

As we already observed graded bundles are naturally homogeneity
structures. The main result of \cite{JG_MR_gr_bund_hgm_str_2011}
states that the opposite is also true: there is an equivalence between
the category of graded bundles and the category of homogeneity
structures (when restricted to connected manifolds).

\begin{theorem}[\cite{JG_MR_gr_bund_hgm_str_2011}] \label{thm:eqiv_real}
The category of (connected) graded bundles is equivalent to the
category of (connected) homogeneity structures. At the level of objects
this equivalence is provided by the following two constructions:
\begin{itemize}
\item With every graded bundle $\tau:E\rightarrow M$ one can associate
the homogeneity structure $(E,h^{E})$, where $h^{E}$ is the action by
homotheties of $E$.
\item Given a homogeneity structure $(M,h)$, the map
$h_{0}:M\rightarrow M_{0}:=h_{0}(M)$ provides $M$ with a canonical
structure of a graded bundle such that $h$ is the related action by homotheties.
\end{itemize}
And at the level of morphism:
\begin{itemize}
\item Every graded bundle morphism $\Phi:E_{1}\rightarrow E_{2}$ is a
morphism of the related homogeneity structures $(E_{1},h^{E_{1}})$ and
$(E_{2},h^{E_{2}})$.
\item Every homogeneity structure morphism $\Phi
:(E_{1},h^{1})\rightarrow(E_{2},h^{2})$ is a morphism of graded
bundles $h^{1}_{0}:E_{1}\rightarrow h^{1}_{0}(M)$ and
$h^{2}_{0}:E_{2}\rightarrow h^{2}_{0}(M)$.
\end{itemize}
\end{theorem}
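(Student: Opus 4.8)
The plan is to prove Theorem~\ref{thm:eqiv_real} (following \cite{JG_MR_gr_bund_hgm_str_2011}) by establishing the two object-level constructions are mutually inverse and then checking functoriality on morphisms. The only nontrivial direction is the second construction: given a homogeneity structure $(M,h)$, one must show that $h_0:M\to M_0:=h_0(M)$ is a locally trivial fiber bundle whose typical fiber carries a canonical graded space structure with $h$ acting by homotheties. The morphism statements and the fact that a graded bundle is a homogeneity structure have already been recorded in the excerpt, so the heart of the argument is purely local.

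First I would analyze the action near a fixed point. Since $h_s\circ h_t=h_{ts}$, the map $h_0$ is idempotent, so $M_0$ is the fixed-point set of $h_0$; one checks it is a closed submanifold (e.g. $h_0$ is a smooth retraction, and the rank of $\mathrm{d}h_0$ is locally constant by the idempotency $\mathrm{d}h_0\circ\mathrm{d}h_0=\mathrm{d}h_0$ together with connectedness, whence $M_0$ is an embedded submanifold and $h_0$ a submersion onto it). Fix $p\in M_0$ and work in the fiber $E_p:=h_0^{-1}(p)$. The key device is to differentiate the action: the \emph{weight vector field} (Euler vector field) $\Delta_E$ on $M$, defined by $\Delta_E(x)=\frac{\mathrm{d}}{\mathrm{d}t}\big|_{t=1}h_t(x)$, is complete, its flow is $h_{e^s}$, and on each fiber it has a unique zero at the image of the fixed point with nonnegative integer eigenvalues of the linearization. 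Diagonalizing the linear part and using the $(\mathbb R,\cdot)$-action (not just $(\mathbb R_{>0},\cdot)$, which forces the eigenvalues to be integers rather than arbitrary positive reals and kills fractional-power behavior), one produces, via a Sternberg–type linearization adapted to homogeneity, global coordinates $(y^\alpha_{w^\alpha})$ on $E_p$ in which $h_t(y^\alpha_w)=t^w y^\alpha_w$. Concretely, one builds homogeneous functions: for a smooth $\phi$ on $M$, the components $\phi^{(w)}(x)$ appearing in the (polynomial, by properness of the fixed point and integrality of weights) expansion $\phi(h_t(x))=\sum_w t^w\phi^{(w)}(x)$ are homogeneous of weight $w$, and Lemma~\ref{lem:hol_hmg_fun_R}-type reasoning shows these generate enough coordinates.

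Next I would upgrade this pointwise picture to local triviality. Choosing a chart $(x^i)$ on $M_0$ near $p$ and combining its pullbacks $\tau^*x^i$ with a fiber-wise graded coordinate system depending smoothly on the base point, one gets a local trivialization $\tau^{-1}(U)\xrightarrow{\simeq}U\times\mathbb R^{\mathbf d}$ intertwining $h$ with the standard homotheties; the smoothness of $h$ at $t=0$ is what guarantees the fiber coordinates can be chosen to vary smoothly and that the transition maps $g_{UU'}(q)$ are graded-space isomorphisms (polynomial, weight-preserving) depending smoothly on $q$. That the two constructions are inverse to each other is then immediate: starting from a graded bundle, its homotheties have $h_0$ equal to the bundle projection and the recovered graded structure is the original one; starting from a homogeneity structure, the homotheties of the reconstructed graded bundle coincide with $h$ by construction. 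Finally, on morphisms: a graded-bundle morphism intertwines homotheties (already noted), and conversely a smooth $\Phi$ intertwining $h^1,h^2$ automatically satisfies $\Phi\circ h^1_0=h^2_0\circ\Phi$, hence maps fibers to fibers over the induced base map, and in homogeneous coordinates an intertwining smooth map is forced to be polynomial homogeneous in weights, i.e.\ a graded-bundle morphism.

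The main obstacle is the local normal form: showing that a smooth $(\mathbb R,\cdot)$-action can be linearized/graded-normalized on each fiber, producing global graded coordinates with \emph{integer} weights and smooth dependence on the base. This is where smoothness of $h$ at $0$ is essential — it is what rules out exotic fibers and non-integer or non-polynomial behavior — and it is the technical core carried out in \cite{JG_MR_gr_bund_hgm_str_2011}; everything else (idempotency of $h_0$, submanifold and submersion properties, the inverse-construction bookkeeping, and the morphism statements) is comparatively routine once the normal form is in hand.
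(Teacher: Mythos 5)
The paper does not actually reprove Theorem~\ref{thm:eqiv_real}: it cites \cite{JG_MR_gr_bund_hgm_str_2011} and records only the key idea, namely the embedding $p\mapsto \mathbf{t}^{k}_{t=0}h_{t}(p)$ of $M$ into $\mathrm{T}^{k}M$ (for $k$ large enough), from which the graded bundle structure on $M$ is induced from the canonical one on $\mathrm{T}^{k}M$ (cf. Example~\ref{ex:TkM} and the proof of Theorem~\ref{thm:equiv_holom}). Your outline largely converges to the same mechanism: the homogeneous functions $\phi^{(w)}$ you extract from the expansion of $\phi(h_{t}(x))$ in powers of $t$ are exactly the pullbacks, under that jet map, of the adapted coordinates $x^{i,(\alpha)}$ on $\mathrm{T}^{k}M$ (this is also how Lemma~\ref{lem:better_coord_for_h} proceeds in the super setting), and your preliminary steps (idempotency of $h_{0}$, the constant-rank argument for $M_{0}$, the weight vector field with integer eigenvalues forced by smoothness of the action at $t=0$ and its extension to $t<0$) are all consistent with the paper's sketch. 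So at the level of strategy the two routes essentially agree.

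The one place where you substitute a genuinely different --- and, as stated, unjustified --- device is the appeal to a ``Sternberg-type linearization'' of the weight vector field to produce the fiberwise graded coordinates. Sternberg's theorem is a local statement at a fixed point under non-resonance hypotheses, whereas here the resonances $w^{\alpha}=w^{\alpha_{1}}+\ldots+w^{\alpha_{j}}$ are precisely what the admissible transition maps \eqref{eqn:graded_transf_R} exploit; moreover one needs coordinates that are global on each fiber and vary smoothly with the base point, which no off-the-shelf normal-form theorem delivers. The jet-embedding route avoids this entirely: globality on fibers comes from the fact that the whole fiber is swept out from any neighbourhood of the zero section by the action itself (the same argument as in Lemma~\ref{lem:complex_gr_sspace}). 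Relatedly, your assertion that $\phi(h_{t}(x))=\sum_{w}t^{w}\phi^{(w)}(x)$ is a \emph{finite} polynomial in $t$ --- equivalently, that the degree is finite and locally constant, so that the $k$-jet map is injective for some fixed $k$ --- is the actual technical core of \cite{JG_MR_gr_bund_hgm_str_2011}; the phrase ``by properness of the fixed point and integrality of weights'' is not an argument for it. You do correctly flag this as the step deferred to the reference, which is exactly what the paper itself does, so the proposal is acceptable as a sketch provided the Sternberg framing is replaced by the jet-prolongation construction you in fact describe in coordinates.
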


Let us comment briefly on the proof.
The passage from graded bundles to homogeneity structures is obtained
by considering the natural action by homotheties discussed above. The
crucial (and difficult) part of the proof is to show that for every
homogeneity structure $(M,h)$, the manifold $M$ has a graded bundle
structure over $h_{0}(M)$ compatible with the action $h$. The main idea
is to associate to every point $p\in M$ the $k$th-jet at
$t=0$ (for $k$ big enough) of the curve $t\mapsto h_{t}(p)$. In this
way we obtain an embedding $M\hookrightarrow\mathrm{T}^{k}M$, and the
graded bundle structure on $M$ can now be naturally induced from the
canonical graded bundle structure on $\mathrm{T}^{k} M$ (cf. Ex. \ref{ex:TkM}
and the proof of \reftext{Theorem~\ref{thm:equiv_holom}}).

The assumption of connectedness has just a technical character: we want
to prevent a situation when the fibers of a graded bundle over
different base components have different ranks.

\paragraph*{The weight vector field, the core and the natural affine
fibration}
Let us end the discussion of graded bundles by introducing several
constructions naturally associated with this notion.

Observe first that the homogeneity structure $h:\mathbb{R}\times
M\rightarrow M$ provides $M$ with a natural, globally-defined, action
of the additive group $(\mathbb{R},+)$ by the formula $(t,p)\mapsto
h_{e^{t}}(p)$. Clearly such an action is a flow of some (complete)
vector field.

\begin{definition}\label{def:euler_vf}
A (complete) vector field $\Delta_{M}$ on $M$ associated with the flow
$(t,p)\mapsto h_{e^{t}}(p)$ is called the \emph{weight vector field}
of $M$. Alternatively
$\Delta_{M}(p)=\frac{\mathrm{d}}{\mathrm{d}\,t}\big
|_{t=1}h_{t}(p)$.
\end{definition}

It is easy to show that, in local graded coordinates $(y^{\alpha
}_{w^{\alpha}})$ on $M$ (such coordinates exist since $M$ is a graded
bundle by \reftext{Theorem~\ref{thm:eqiv_real}}), the weight vector field reads as
\[
\Delta_{M}=\sum_{\alpha}w^{\alpha}y^{\alpha}_{w^{\alpha}}\partial
_{y^{\alpha}_{w^{\alpha}}}\ .
\]
Actually, specifying a weight vector field is equivalent to defining
the homogeneity structure on $M$. The passage from the weight vector
field to the action of $(\mathbb{R},\cdot)$ is given by
$h_{e^{t}}:=\exp(t\cdot\Delta_{M})$.

\begin{remmark}\label{rem:euler_weight}
The assignment $\phi\mapsto w\phi$ where $\phi:M\rightarrow\mathbb
{R}$ is a homogeneous function of weight $w$ can be extended to a
derivation in the algebra of smooth functions on $M$, thus a vector
field. Clearly, it coincides with the weight vector field $\Delta_{M}$
what justifies the name for $\Delta_{M}$.
Besides, the notion of a weight vector field can be used to study the
weights of certain geometrical objects defined on $M$ (cf. \reftext{Definitions~\ref{def:hgm_real} and \ref{def:hgm_str}}). This should be clear,
since the homogeneity structure used to define the weights can be
obtained by integrating the weight vector field.

For example, $X$ is a weight $w$ vector field on $M$ if and only if
\[
[\Delta_{M},X]=w\cdot X\ .
\]
Using the results of \reftext{Lemma~\ref{lem:hol_hmg_fun_R}}, it is easy to see
that, in local graded coordinates $(y^{\alpha}_{w^{\alpha}})$, such a
field has to be of the form
\[
X=\sum_{\alpha}P_{\alpha}\cdot\partial_{y^{\alpha}_{w^{\alpha
}}}\ ,
\]
where $P_{\alpha}$ is a homogeneous function of weight $w+w^{\alpha}$
for each index $\alpha$. Thus $X$, regarded as a derivation, takes a
function of weight $w'$ to a function of weight $w+w'$.
\end{remmark}

A graded bundle $\tau:E^{k}\rightarrow M$ of degree $k$ ($k\geq1$) is
fibrated by submanifolds defined (invariantly) by fixing values of all
coordinates of weight less or equal $j$ ($0\leq j\leq k$) in a given
graded coordinate system. The quotient space is a graded bundle of
degree $j$ equipped with an atlas inherited from the atlas of $E^{k}$
in an obvious way. The obtained bundles will be denoted by $\tau^{j}:
E^{j}\rightarrow M$. They can be put together into the following
sequence called \emph{the tower of affine bundle projections
associated with $E^{k}$}:
%
\begin{equation}\label{eqn:tower_grd_spaces}
E^{k}\xrightarrow{\tau^{k}_{k-1}} E^{k-1}\xrightarrow{\tau
^{k-1}_{k-2}} E^{k-2}\xrightarrow{\tau^{k-2}_{k-3}} \ldots
\xrightarrow{\tau^{2}_{1}} E^{1}\xrightarrow{\tau^{1}} M.
\end{equation}
Define (invariantly) a submanifold $\widehat{E^k}\subset E^{k}$ of a
graded bundle $\tau: E\rightarrow M$ of rank $(d_{1}, \ldots, d_{k})$
by setting to zero all fiber coordinates of degree less than $k$. It is
a graded subbundle of rank $(0,\ldots, 0, d_{k})$ but we shall
consider it as a vector bundle with homotheties $(t, (z^{\alpha}_{k}))
\mapsto(t \cdot z^{\alpha}_{k})$ and call it the \emph{core} of
$E^{k}$. It is worth to note that a morphism of graded bundles respects
the associated towers of affine bundle projections and induces a vector
bundle morphism on the core bundles.

\begin{example}\label{ex:TkM_tower_core} The core of $\mathrm{T}^{k}
M$ is $\mathrm{T}M$, while $\tau^{j}_{j-1}$ in the tower of affine
projections associated with $\mathrm{T}^{k} M$ is just the natural
projection to lower-order jets $\mathrm{T}^{j} M \rightarrow\mathrm
{T}^{j-1} M$.
\end{example}

\paragraph*{The monoid $\mathcal{G}_{k}$}
We shall end this introductory part by introducing $\mathcal{G}_{2}$,
the monoid of the 2nd-jets of punctured maps $\gamma
:(\mathbb{R},0)\rightarrow(\mathbb{R},0)$. Actually it is a special
case of
%
\begin{equation}\label{eqn:def_Gk}
\mathcal{G}_{k} := \{[\phi]_{k}\ |\ \phi:\mathbb{R}\rightarrow
\mathbb{R},\ \phi(0)=0\}\ ,
\end{equation}
the monoid of the $k$th-jets of punctured maps $\phi:
(\mathbb{R}, 0)\rightarrow(\mathbb{R}, 0)$. Here $[\phi]_{k}$
denotes an equivalence class of the relation
\[
\phi\sim_{k} \psi\quad\text{if and only if}\quad\phi
^{(j)}(0)=\psi^{(j)}(0)\quad\text{for every $j=1,2,\ldots, k$.}
\]
The natural multiplication on $\mathcal{G}_{k}$ is induced by the
composition of maps
\[
[\phi]_{k}\cdot[\psi]_{k}:=[\phi\circ\psi]_{k}\ ,
\]
for every $\phi,\psi:(\mathbb{R},0)\rightarrow(\mathbb{R},0)$.

Thus, a class $[\phi]_{k}$ is fully determined by the coefficients of
the Taylor expansion of $\phi$ at $0$ up to order $k$  and,
consequently, $\mathcal{G}_{k}$ can be seen as a set of polynomials of
degree less or equal $k$ vanishing at $0$ equipped with a natural
multiplication defined by composing polynomials and then truncating
terms of order greater than $k$:
\[
\mathcal{G}_{k} \simeq\{a_{1} t + \frac{1}{2} a_{2} t^{2} + \ldots+
\frac{1}{k!} a_{k} t^{k} + o(t^{k}): a_{1}, \ldots, a_{k}\in\mathbb
{R}\} \simeq\mathbb{R}^{k}.
\]

\begin{remmark}\label{rem:g_k_k=1_2}
Note that $\mathcal{G}_{1} \simeq(\mathbb{R}, \cdot)$ is just the
monoid of multiplicative reals, while the multiplication in $\mathcal
{G}_{2}\simeq\mathbb{R}^{2}$ is given by
\[
(a,b)(A,B)=(aA,a B+bA^{2})\ .
\]

Obviously,
\[
(\mathbb{R}, \cdot)\simeq\{[\phi]_{k}: \phi(t)=at, a\in\mathbb
{R}\}
\]
is a submonoid of $\mathcal{G}_{k}$ for every $k$.
\end{remmark}

Consider the set of algebra endomorphisms $\operatorname{End}(\mathbb
{D}^{k})$ of the Weil algebra $\mathbb{D}^{k}=\mathbb{R}[\varepsilon]/
\left\langle\varepsilon^{k+1}\right\rangle$. Every such an
endomorphism is uniquely determined by its value on the generator
$\varepsilon$, i.e., a map of the form
%
\begin{equation}
\label{eqn:basic_trans}
\varepsilon\longmapsto a_{1} \varepsilon+ \frac{1}{2}
a_{2}\varepsilon^{2} + \ldots+\frac{1}{k!} a_{k} \varepsilon^{k}.
\end{equation}
It is an automorphism if $a_{1}\neq0$.
Thus we may identify $\operatorname{End}(\mathbb{D}^{k})$ with
$\mathcal{G}_{k}$, taking into account that the multiplication
obtained by composing two endomorphisms of the form \reftext{\eqref{eqn:basic_trans}} is opposite to the product in $\mathcal{G}_{k}$
based on the identification \reftext{\eqref{eqn:def_Gk}}, i.e.
\[
\operatorname{End}(\mathbb{D}^{k})^{\text{op}} \simeq\mathcal{G}_{k}.
\]

\paragraph*{Left and right monoid actions}
Let $\mathcal{G}$ be an arbitrary monoid. By a \emph{left $\mathcal
{G}$-action} on a manifold $M$ we understand a map $\mathcal{G}\times
M\rightarrow M$ denoted by $(g, p)\mapsto g.p$ such that
$h.(g.p)=(h\cdot g).p$ for every $g,h\in\mathcal{G}$ and $p\in M$.
Here $h\cdot g$ denotes the multiplication in $\mathcal{G}$. \emph
{Right $\mathcal{G}$-actions} $M\times\mathcal{G}\rightarrow M$ are
defined analogously. Note that if the monoid multiplication is Abelian
(as is for example in the case of the multiplicative reals $(\mathbb
{R},\cdot)$ and the multiplicative complex numbers $(\mathbb{C},\cdot
)$), then every left action is automatically a right action and vice versa.

Note that any left $\mathcal{G}$-action $(g, p)\mapsto g.p$, gives
rise to a right $\mathcal{G}^{\text{op}}$-action on the same manifold
$M$ given by the formula $p.g = g.p$. However, unlike the case of
groups actions, in general, there is no canonical correspondence
between left and right $\mathcal{G}$-actions. For example, that is the
case if $\mathcal{G}=\mathcal{G}_{k}$ for $k\geq2$, since the
monoids $\mathcal{G}_{k}$ and $\mathcal{G}_{k}^{\text{op}}$ are not
isomorphic.

All monoids considered in our paper will be smooth, i.e., we restrict
our attention to monoids $\mathcal{G}$ which are smooth manifolds and
such that the multiplication $\cdot:\mathcal{G}\times\mathcal
{G}\rightarrow\mathcal{G}$ is a smooth map. We shall study smooth
actions, of these monoids on manifolds, i.e. the actions $\mathcal
{G}\times M\rightarrow M$ (or $M\times\mathcal{G}\rightarrow M$)
which are smooth maps.

We present now two canonical examples of left and right $\mathcal
{G}_{k}$-actions.
%
\begin{example}\label{ex:TkM_Gk} The natural composition of
$k$th-jets
\[
[\gamma]_{k}\circ[\phi]_{k} \mapsto[\gamma\circ\phi]_{k},
\]
for $\phi:\mathbb{R}\rightarrow\mathbb{R}$, $\gamma: \mathbb
{R}\rightarrow M$ where $\phi(0)=0$ defines a right $\mathcal
{G}_{k}$-action on the manifold $\mathrm{T}^{k} M$.
\end{example}

\begin{example}\label{ex:Gk_Tk_starM}
Following Tulczyjew's notation \cite{Tulczyjew_preprint}, the higher
cotangent space to a manifold $M$ at a point $p\in M$, denoted by
$\mathrm{T}_{p}^{k\ast} M$, consists of $k$th-jets at
$p\in M$ of functions $f: M\rightarrow\mathbb{R}$ such that $f(p)=0$.
The higher cotangent bundle $\mathrm{T}^{k\ast} M$ is a vector bundle
whose fibers are $\mathrm{T}_{p}^{k\ast} M$ for $p\in M$. The natural
composition of $k$th-jets
\[
[\phi]_{k}\circ[(f, p)]_{k} \mapsto[(\phi\circ f, p)]_{k},
\]
where $\phi:\mathbb{R}\rightarrow\mathbb{R}$ is as before, and $f:
M\rightarrow\mathbb{R}$ is such that $f(p)=0$ defines a left
$\mathcal{G}_{k}$-action on $\mathrm{T}^{k\ast}M$.
\end{example}

\section{On actions of the monoid of complex numbers}
\label{sec:complex}

In this section we study actions of the multiplicative monoid $(\mathbb
{C},\cdot)$ on manifolds. We begin by recalling some basic
construction from complex analysis, including the bundle of holomorphic
jets, and by introducing the notions of a complex graded space, per
analogy to \reftext{Definition~\ref{def:gr_space}} in the real case. Later,
again using the analogy with Section~\ref{sec:perm}, we define complex
and holomorphic graded bundles and homogeneity structures. The main
results of this section are contained in \reftext{Theorems~\ref{thm:equiv_holom} and \ref{thm:equiv_complex}}. The first states that
there is an equivalence between the categories of holomorphic
homogeneity structures (holomorphic action of the monoid $(\mathbb
{C},\cdot)$) and the category of holomorphic graded bundles. This
result and its proof is a clear analog of \reftext{Theorem~\ref{thm:eqiv_real}},
which deals with the smooth actions of the monoid $(\mathbb{R},\cdot
)$. In \reftext{Theorem~\ref{thm:equiv_complex}} we establish a similar
equivalence between smooth actions of $(\mathbb{C},\cdot)$ and
complex graded bundles. However, for this equivalence to hold we need
to input additional conditions (concerning, roughly speaking the
compatibility between the real and the complex parts of the action) on
the action of $(\mathbb{C},\cdot)$ in this case. We will call such
actions nice\ complex homogeneity structures (see \reftext{Definition~\ref{def:nice_hgm_str}}).

Throughout this section $N$ will be a complex manifold. We shall write
$N_{\mathbb{R}}$ for the smooth manifold associated with $N$, thus
$\dim_{\mathbb{R}}N_{\mathbb{R}}= 2\cdot\dim_{\mathbb{C}}N$. Let
$\mathcal{J}: \mathrm{T}N_{\mathbb{R}}\rightarrow\mathrm
{T}N_{\mathbb{R}}$ be the integrable almost complex structure defined
by~$N$.

\paragraph*{Holomorphic jet bundles} Following \cite
{Green_Griffiths_jet_bundles} we shall define $\mathrm{J}^{k}N$, the
space of $k$th-jets of holomorphic curves on $N$, and
recall its basic properties.

Consider a point $q\in N$, and let $\Delta_{R}\subset\mathbb{C}$
denote a disc of radius $0<R\leq\infty$ centered at $0$. Let
\[
\gamma:\Delta_{R}\rightarrow N
\]
be a holomorphic curve such that $\gamma(0)=q$. In a local holomorphic
coordinate system $(z^{j})$ around $q\in N$ the curve $\gamma$ is
given by a convergent series
\[
\gamma^{j}(\xi) := z^{j}(\gamma(\xi)) = a^{j}_{0} + a^{j}_{1} \xi+
a^{j}_{2} \xi^{2} + \ldots, \quad|\xi|<r
\]
where the coefficient $a^{j}_{l}$ equals $\frac{1}{l!} \frac{\mathbf
{d}^{l} \gamma^{j}}{\mathbf{d}\xi^{l}}(0)$ and $r\leq R$ is a
positive number. We say that curves $\gamma:\Delta_{R}\rightarrow N$,
and $\widetilde{\gamma}: \Delta_{\widetilde{R}}\rightarrow N$ \emph
{osculate to order $k$} if $\gamma(0)=\widetilde{\gamma}(0)$ and
$\frac{\mathbf{d}^{l} \gamma^{j}}{\mathbf{d}
\xi^{l}}(0)= \frac{\mathbf{d}^{l} \widetilde{\gamma}^{j}}{\mathbf
{d}\xi^{l}}(0)$ for any $j$ and $l=1,2, \ldots, k$. This property
does not depend on the choice of a holomorphic coordinate system around
$q$. The equivalence class of $\gamma$ will be denoted by $\mathbf
{j}^{k} \gamma(0)$ and called the $k$th-\emph{jet of a
holomorphic curve $\gamma$ at $q=\gamma(0)$}, while the set of all such
$k$th-jets at a given point $q$ will be denoted by
$\mathrm{J}^{k}_{q} N$. The totality
\[
\mathrm{J}^{k} N = \bigcup_{q\in M} \mathrm{J}^{k}_{q} N
\]
turns out to be a holomorphic (yet, in general, not linear) bundle over
$N$. Indeed, the local coordinate system $(z^{j})$ for $N$ gives rise
to a local \emph{adapted coordinate system} $(z^{j, (\alpha)})_{0\leq
\alpha\leq k}$ for $\mathrm{J}^{k} N$ where $z^{j, (\alpha)}(\mathbf
{j}^{k} \gamma(0)) = \frac{\mathbf{d}^{l} \gamma^{j}}{\mathbf
{d}\xi^{l}}(0)$. It is easy to see that transition functions between
two such adapted coordinate systems are holomorphic. The bundle
$\mathrm{J}^{k} N$ is called the \emph{bundle of holomorphic} $k$th
\textit{jets of}~$N$.

We note also that a complex curve $\gamma:\Delta_{R}\rightarrow N$
lifts naturally to a (holomorphic) curve $\mathbf{j}^{k} \gamma:
\Delta_{R}\rightarrow\mathrm{J}^{k} N$. Moreover, a holomorphic map
$\Phi: N_{1}\rightarrow N_{2}$ induces a (holomorphic) map $\mathrm
{J}^{k} \Phi: \mathrm{J}^{k} N_{1}\rightarrow\mathrm{J}^{k} N_{2}$
defined analogously as in the category of smooth manifolds, i.e.,
$\mathrm{J}^{k} \Phi(\mathbf{j}^{k} \gamma):= \mathbf{j}^{k}(\Phi
\circ\gamma)$. This construction makes $\mathrm{J}^{k}$ a functor
from the category of complex manifolds into the category of holomorphic bundles.

Finally, let us comment that we do not work with any fixed radius $R$
of the disc $\Delta_{R}$, being the domain of the holomorphic curve
$\gamma$. Instead, by letting $R$ be an arbitrary positive number, we
work with germs of holomorphic curves. This allows to avoid technical
problems related with the notion of holomorphicity.
For example, according to Liouville's theorem, any holomorphic function
$\phi:\Delta_{R}\rightarrow\mathbb{C}$ satisfies
\[
|\phi^{(j)}(0)| \leq\frac{j!}{R^{j}} \operatorname{sup}_{\xi\in
\Delta_{R}} |\phi(\xi)|.
\]
It follows that, in general for a fixed radius $R$, coefficients
$a^{j}_{i}$ cannot be arbitrary.
In particular, it may happen that there are no holomorphic curves
$\gamma: \Delta_{\infty}\rightarrow M$ except for constant ones, or
that for a fixed value $R<\infty$ there are no holomorphic curves
$\gamma:\Delta_{r}\rightarrow M$ such that the derivatives $\frac
{\mathbf{d}^{l}\gamma^{j}}{\mathbf{d}\xi^{l}}(0)$ have given
earlier prescribed values (see for the concept of hyperbolicity and
Kobayashi metric \cite{Kobayashi_book,Ch_Wong_Finsler_geom_hol_jet_bndls}). \medskip

It is a well-known fact from analytic function theory, that a
holomorphic map $\gamma:\Delta_{R}\rightarrow N$ is uniquely
determined by its restriction to the real line $\mathbb{R}\subset
\mathbb{C}$. Therefore it should not be surprising that the
holomorphic jet bundle $\mathrm{J}^{k} N$ can be canonically
identified with the higher tangent bundle $\mathrm{T}^{k} N_{\mathbb
{R}}$. The latter is naturally equipped with an almost complex
structure $\mathcal{J}^{k}: \mathrm{T}\mathrm{T}^{k} N_{\mathbb
{R}}\rightarrow\mathrm{T}\mathrm{T}^{k} N_{\mathbb{R}}$ induced
from $\mathcal{J}$, the almost complex structure on $N_{\mathbb{R}}$.
Namely, $\mathcal{J}^{k}:= \kappa_{k}\circ\mathrm{T}^{k} \mathcal
{J}\circ\kappa_{k}^{-1}$, where $\kappa_{k}: \mathrm{T}^{k} \mathrm
{T}N_{\mathbb{R}}\rightarrow\mathrm{T}\mathrm{T}^{k} N_{\mathbb
{R}}$ is the canonical flip. In local adapted coordinates $(x^{j,
(\alpha)}, y^{j, (\alpha)})$ on $\mathrm{T}^{k} N_{\mathbb{R}}$
such that $z^{j} = x^{j} + \sqrt{-1} y^{j}$ we have
\[
\mathcal{J}^{k} \left( \frac{\partial}{\partial x^{j, (\alpha
)}}\right) = \frac{\partial}{\partial y^{j, (\alpha)}}, \quad
\mathcal{J}^{k}\left( \frac{\partial}{\partial y^{j, (\alpha
)}}\right) = - \frac{\partial}{\partial x^{j, (\alpha)}}.
\]
Thus the local functions $z^{j}_{\alpha}:= x^{j, (\alpha)} + \sqrt
{-1} y^{j, (\alpha)}$ form a system of local complex coordinates on
$\mathrm{T}^{k} N_{\mathbb{R}}$, and hence we may treat $\mathrm
{T}^{k} N_{\mathbb{R}}$ as a complex manifold. We identify $\mathrm
{J}^{k} N$ with $\mathrm{T}^{k} N_{\mathbb{R}}$ as complex manifolds,
by means of the map $\mathbf{j}^{k} \gamma\mapsto\mathbf{t}^{k}
\gamma_{|\mathbb{R}}$, where $\gamma_{|\mathbb{R}}$ is the
restriction of $\gamma$ to the real line $\mathbb{R}\subset\mathbb
{C}$. In local coordinates this identification looks rather trivially:
$(z^{j, (\alpha)})\mapsto(z^{j}_{\alpha})$. This construction is
clearly functorial, i.e., for any holomorphic map $\Phi:
N_{1}\rightarrow N_{2}$ the following diagram commutes:
\[
\xymatrix{
\mathrm{J}^k N_1 \ar[d]^{\simeq}\ar[rr]^{\mathrm{J}^k \Phi} &&
\mathrm{J}^k N_2\ar[d]^{\simeq} \\
\mathrm{T}^k (N_1)_{\mathbb{R}} \ar[rr]^{\mathrm{T}^k \Phi} &&
\mathrm{T}^k (N_2)_{\mathbb{R}}.
}
\]

For $k=1$ there is another canonical identification of the real tangent
bundle $\mathrm{T}N_{\mathbb{R}}$ with the, so-called, \emph
{holomorphic tangent bundle} of $N$. Consider, namely, the
complexification $\mathrm{T}^{\mathbb{C}}N:= \mathrm{T}N_{\mathbb
{R}}\otimes\mathbb{C}$ and extend $\mathcal{J}$ to a $\mathbb
{C}$-linear endomorphism $\mathcal{J}^{\mathbb{C}}$ of $\mathrm
{T}^{\mathbb{C}}N$. The $(+i)$- and $(-i)$-eigenspaces of $\mathcal
{J}^{\mathbb{C}}$ define the canonical decomposition
\[
\mathrm{T}^{\mathbb{C}}N = \mathrm{T}'N \oplus\mathrm{T}'' N
\]
of $\mathrm{T}^{\mathbb{C}}N$ into the direct sum of complex
subbundles $\mathrm{T}' N$, $\mathrm{T}'' N$ called, respectively,
\emph{holomorphic} and \emph{anti-holomorphic tangent bundles of
$N$}. It is easy to see, that the composition $\mathrm{T}N_{\mathbb
{R}}\subset\mathrm{T}^{\mathbb{C}}N \twoheadrightarrow\mathrm
{T}'N$ gives a complex bundle isomorphism $\mathrm{T}N_{\mathbb
{R}}\simeq\mathrm{T}' N$. Let $\Phi: (N_{1})_{\mathbb
{R}}\rightarrow(N_{2})_{\mathbb{R}}$ be a smooth map and denote by
$\mathrm{T}^{\mathbb{C}}\Phi: \mathrm{T}^{\mathbb{C}}N_{1}
\rightarrow\mathrm{T}^{\mathbb{C}}N_{2}$ the $\mathbb{C}$-linear
extension of $\mathrm{T}\Phi$. It is well known that $\Phi$ is
holomorphic if and only if $\mathrm{T}\Phi$ is $\mathbb{C}$-linear.
The latter is equivalent to $\mathrm{T}^{\mathbb{C}}\Phi(\mathrm
{T}'N_{1})\subset\mathrm{T}' N_{2}$. In such a case we denote
$\mathrm{T}' \Phi: = \mathrm{T}^{\mathbb{C}}\Phi|_{\mathrm{T}'
N_{1}}: \mathrm{T}' N_{1}\rightarrow\mathrm{T}' N_{2}$.

Thus, under the canonical identifications discussed above, all three
constructions $\mathrm{J}^{1} \Phi$, $\mathrm{T}\Phi$ and $\mathrm
{T}'\Phi$ coincide (although the functor $\mathrm{T}$ is applicable
to a wider class of maps than $\mathrm{J}^{1}$ and $\mathrm{T}'$).

In what follows, given a smooth map $\phi: N\rightarrow\mathbb{C}$,
the real differential of $\phi$ at point $q\in N$ is denoted by
$\mathbf{d}_{q} \phi: \mathrm{T}N_{\mathbb{R}}\rightarrow\mathbb
{C}$. If $\phi$ happens to be holomorphic, then $\mathbf{d}_{q}\phi$
is $\mathbb{C}$-linear.

\paragraph*{Holomorphic and complex graded bundles}
A notion of a graded space has its obvious complex counterpart. It is
a generalization of a complex vector space. We basically rewrite the
definitions from Section~\ref{sec:perm} in the holomorphic context.

\begin{definition}
Let $\mathbf{d}= (d_{1}, \ldots, d_{k})$ be a sequence of
non-negative integers, let $I$ be a set of cardinality $|\mathbf
{d}|:=d_{1}+\ldots+d_{k}$, and let $I\ni\alpha\mapsto w^{\alpha}\in
\mathbb{Z}_{+}$, be a map such that $d_{i}= \#\{\alpha\in I: w^{a}=i \}$
for each $1\leq i\leq k$.

A \emph{complex graded space of rank $\mathbf{d}$} is a complex
manifold $\mathrm{V}$ biholomorphic with $\mathbb{C}^{|\mathbf{d}|}$
and equipped with an equivalence class of complex graded coordinates.
By definition, a \emph{system of complex graded coordinates} on
$\mathrm{V}$ is a global complex coordinate system $(z^{\alpha
})_{\alpha\in I}: \mathrm{V}\overset{\simeq}{\rightarrow}\mathbb
{C}^{|\mathbf{d}|}$ with \emph{weight} $w^{\alpha}$ assigned to each
function $z^{\alpha}$, $\alpha\in I$. To indicate the presence of
weights we shall sometimes write $z^{\alpha}_{w^{\alpha}}$ instead of
$z^{\alpha}$.

Two systems of complex graded coordinates, $(z^{\alpha}_{w^{\alpha
}})$ and $(\underline{z}^{\alpha}_{\underline{w}^{\alpha}})$ are
\emph{equivalent} if there exist constants $c_{\alpha_{1} \ldots
\alpha_{j}}^{\alpha}\in\mathbb{C}$, defined for indices such that
$\underline{w}^{\alpha}=w^{\alpha_{1}} +\ldots+ w^{\alpha_{j}}$,
satisfying
%
\begin{equation}\label{eqn:graded_transf}
\underline{z}^{\alpha}_{\underline{w}^{\alpha}} = \sum_{\substack
{j=1,2, \ldots\\ \underline{w}^{\alpha}=w^{\alpha_{1}}+\ldots
+w^{\alpha_{j}}}} c_{\alpha_{1} \ldots\alpha_{j}}^{\alpha
}z^{\alpha_{1}}_{w^{\alpha_{1}}} \ldots z^{\alpha_{k}}_{w^{\alpha_{j}}}.
\end{equation}
The highest coordinate weight (i.e., the highest number $i$ such that
$d_{i}\neq0$) is called the \emph{degree} of a complex graded space
$\mathrm{V}$.

By a \emph{morphism between complex graded spaces} $\mathrm{V}_{1}$
and $\mathrm{V}_{2}$ we understand a holomorphic map $\Phi:\mathrm
{V}_{1}\rightarrow\mathrm{V}_{2}$ which in some (and thus any)
complex graded coordinates writes as a polynomial homogeneous in
weights~$w^{\alpha}$.
\end{definition}

We remark that weights $(w^{\alpha})$ which are assigned to
coordinates on $\mathrm{V}$ are a part of the structure of the graded
space $\mathrm{V}$. Note that the set of functions $(\underline
{z}^{\alpha}_{\underline{w}^{\alpha}})$ defined by \reftext{\eqref{eqn:graded_transf}} defines a biholomorphic map if and only if the
matrices $(c^{\alpha}_{\beta})_{j}$ with fixed weights $w^{\alpha
}=w^{\beta}= j$ are non-singular for $j=1, 2, \ldots, k$.

\begin{remmark}
A complex graded space of rank $\mathbf{d}=(k)$ is just a complex
vector space of dimension $k$. Indeed, there is no possibility to
define a smooth map (a hypothetical addition of vectors in $\mathbb
{C}^{k}$) $+': \mathbb{C}^{k}\times\mathbb{C}^{k} \to\mathbb
{C}^{k}$ different from the standard addition of vectors in $\mathbb
{C}^{k}$ and such that $\mathbb{C}^{k}$ equipped with the standard
multiplication by complex numbers and addition $+'$ would satisfy all
axioms of a complex vector space.
This follows immediately from an analogous statement for a real vector
space: $+'$ would define an alternative addition on $\mathbb
{R}^{2n}\approx\mathbb{C}^{n}$, which is impossible.
\end{remmark}

Analogously to the real case, any complex graded space induces
an action $h^{\mathrm{V}}: \mathbb{C}\times\mathrm{V}\to\mathrm
{V}$ of the multiplicative monoid $(\mathbb{C}, \cdot)$ defined in
complex graded coordinates $(z^\alpha_w)$ by
\[
h^{\mathrm{V}}(\xi, (z^{\alpha}_{w})) = (\xi^{w} \cdot z^{\alpha}_{w}).
\]
Here $\xi\in\mathbb{C}$. We shall call $h^{\mathrm{V}}$ the action
by \emph{homotheties} of $\mathrm{V}$. Instead of $h^{\mathrm
{V}}(\xi,\cdot)$ we shall also write $h^{\mathrm{V}}_{\xi}(\cdot)$.

\begin{definition} A smooth function $f: \mathrm{V}\rightarrow\mathbb
{C}$ defined on a graded space $\mathrm{V}$ is called \emph{complex
homogeneous of weight} $w$ if
%
\begin{equation}\label{eqn:def_hmg_fun}
f(h^{\mathrm{V}}(\xi, v)) = \xi^{w} \, f(v)
\end{equation}
for any $v\in\mathrm{V}$ and any $\xi\in\mathbb{C}$.
\end{definition}

We see that the coordinate functions $z^{\alpha}_{w}$ are of
weight $w$ in the sense of above definition.
%
\begin{lemma}\label{lem:hol_hmg_fun}
Any complex homogeneous function on a complex graded space $\mathrm{V}$
is a polynomial in the coordinate functions $z^{\alpha}_{w}$
homogeneous in weights $w$.
\end{lemma}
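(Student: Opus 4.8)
The plan is to reduce the statement to its real counterpart, \reftext{Lemma~\ref{lem:hol_hmg_fun_R}}, and then to use the full multiplicative structure of $\mathbb{C}$ (and not merely of $\mathbb{R}\subset\mathbb{C}$) in order to eliminate anti-holomorphic terms. Write $z^{\alpha}_{w^{\alpha}}=x^{\alpha}_{w^{\alpha}}+\sqrt{-1}\,y^{\alpha}_{w^{\alpha}}$ for the real and imaginary parts of a system of complex graded coordinates on $\mathrm{V}$. First I would check that the underlying real manifold $\mathrm{V}_{\mathbb{R}}$, equipped with the global coordinates $(x^{\alpha}_{w^{\alpha}},y^{\alpha}_{w^{\alpha}})$, each carrying the weight of the corresponding $z^{\alpha}$, is a (real) graded space of rank $(2d_{1},\dots,2d_{k})$ in the sense of \reftext{Definition~\ref{def:gr_space}}: a change of complex graded coordinates \reftext{\eqref{eqn:graded_transf}} is given by polynomials homogeneous in the weights, and taking real and imaginary parts of such polynomials produces real polynomials in the $x$'s and $y$'s which are again homogeneous in the same weights, so the transition maps are automorphisms of the real graded space structure. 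Moreover, the restriction of the $\mathbb{C}$-action $h^{\mathrm{V}}$ to $\mathbb{R}\subset\mathbb{C}$ is precisely the homothety action $h^{\mathrm{V}_{\mathbb{R}}}$ of $\mathrm{V}_{\mathbb{R}}$.

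Next, let $f\colon\mathrm{V}\rightarrow\mathbb{C}$ be complex homogeneous of weight $w$ and write $f=f_{1}+\sqrt{-1}\,f_{2}$ with $f_{1},f_{2}\colon\mathrm{V}_{\mathbb{R}}\rightarrow\mathbb{R}$ smooth. Specializing \reftext{\eqref{eqn:def_hmg_fun}} to $\xi=t\in\mathbb{R}$ and comparing real and imaginary parts shows that $f_{1}$ and $f_{2}$ are homogeneous of weight $w$ on the real graded space $\mathrm{V}_{\mathbb{R}}$ in the sense of \reftext{Definition~\ref{def:hgm_real}}. By \reftext{Lemma~\ref{lem:hol_hmg_fun_R}}, each of $f_{1}$ and $f_{2}$ is a polynomial in $(x^{\alpha}_{w^{\alpha}},y^{\alpha}_{w^{\alpha}})$ homogeneous in the weights $w$; substituting $x^{\alpha}=\tfrac12(z^{\alpha}+\overline{z}^{\alpha})$ and $y^{\alpha}=\tfrac1{2\sqrt{-1}}(z^{\alpha}-\overline{z}^{\alpha})$ we obtain
\[
f=\sum_{A,B}c_{A,B}\,z^{A}\,\overline{z}^{B},\qquad c_{A,B}\in\mathbb{C},
\]
a finite sum over multi-indices $A=(a_{\alpha})$, $B=(b_{\alpha})$ which, by homogeneity in the weights, is supported on pairs with $\langle A\rangle+\langle B\rangle=w$, where $z^{A}:=\prod_{\alpha}\bigl(z^{\alpha}_{w^{\alpha}}\bigr)^{a_{\alpha}}$, $\langle A\rangle:=\sum_{\alpha}w^{\alpha}a_{\alpha}$, and similarly for $B$.

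Finally I would feed this expansion back into \reftext{\eqref{eqn:def_hmg_fun}} for an arbitrary $\xi\in\mathbb{C}$. Since $h^{\mathrm{V}}_{\xi}$ multiplies $z^{A}\overline{z}^{B}$ by $\xi^{\langle A\rangle}\overline{\xi}^{\,\langle B\rangle}$, the homogeneity condition becomes
\[
\sum_{A,B}c_{A,B}\,\xi^{\langle A\rangle}\overline{\xi}^{\,\langle B\rangle}\,z^{A}\overline{z}^{B}=\xi^{w}\sum_{A,B}c_{A,B}\,z^{A}\overline{z}^{B}
\]
for every $\xi\in\mathbb{C}$, and since the monomials $z^{A}\overline{z}^{B}$ are linearly independent as functions on $\mathbb{C}^{|\mathbf{d}|}$ this forces $c_{A,B}\bigl(\xi^{\langle A\rangle}\overline{\xi}^{\,\langle B\rangle}-\xi^{w}\bigr)=0$ for all $\xi$. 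If $c_{A,B}\neq 0$, evaluating on the unit circle $\xi=e^{\sqrt{-1}\theta}$, where $\overline{\xi}=\xi^{-1}$, gives $\langle A\rangle-\langle B\rangle=w$, which combined with $\langle A\rangle+\langle B\rangle=w$ yields $\langle B\rangle=0$; as every $w^{\alpha}$ is a positive integer, $B=0$. Hence $f=\sum_{\langle A\rangle=w}c_{A,0}\,z^{A}$ is a polynomial in the coordinate functions $z^{\alpha}_{w^{\alpha}}$ homogeneous in the weights $w$, which is the assertion. The bookkeeping in the first step is routine; the only conceptual point — and what makes the statement slightly stronger than its real analogue — is the last step, where the availability of all complex scalars, rather than just the real ones, is exactly what forbids anti-holomorphic dependence, and shows a posteriori that $f$ is holomorphic.
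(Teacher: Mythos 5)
Your proof is correct, and while its first half coincides with the paper's (both pass to the underlying real graded space $\mathrm{V}_{\mathbb{R}}$, apply \reftext{Lemma~\ref{lem:hol_hmg_fun_R}} to $\Re f$ and $\Im f$, and conclude that $f$ is a polynomial in $z^{\alpha}$ and $\bar{z}^{\alpha}$ homogeneous of weight $w$), the elimination of the anti-holomorphic variables is done by a genuinely different and more direct argument. The paper proceeds by induction on the weight $w$: a difference-quotient computation shows that $\partial f/\partial z^{\alpha}$ is complex homogeneous of weight $w-w^{\alpha}$, hence by the inductive hypothesis a polynomial in the $z$'s alone; iterating over the indices reduces $f$ to a sum of a holomorphic polynomial and a purely anti-holomorphic remainder $\eta'$, which is then killed by observing that $\eta'$ transforms with $\bar{\xi}^{w}$ rather than $\xi^{w}$. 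You instead plug the full expansion $f=\sum c_{A,B}z^{A}\bar{z}^{B}$ into \eqref{eqn:def_hmg_fun}, use linear independence of the monomials $z^{A}\bar{z}^{B}$ to get $c_{A,B}\bigl(\xi^{\langle A\rangle}\bar{\xi}^{\langle B\rangle}-\xi^{w}\bigr)=0$, and evaluate on the unit circle to force $\langle B\rangle=0$, hence $B=0$ since all weights are positive. This avoids the induction and the difference-quotient computation entirely, at the modest price of invoking linear independence of the monomials $z^{A}\bar{z}^{B}$ (which is standard, e.g.\ by applying $\partial^{A}\bar{\partial}^{B}$ at the origin, or by pulling back to the real monomials in $x^{\alpha},y^{\alpha}$). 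The paper's inductive route has the minor advantage of isolating the useful intermediate fact that $\partial f/\partial z^{\alpha}$ is again complex homogeneous, but as a proof of the stated lemma your argument is cleaner; both correctly exploit, in the final step, that the available scalars form all of $\mathbb{C}$ and not just $\mathbb{R}$.
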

\begin{proof}
Let $\phi:\mathrm{V}\rightarrow\mathbb{C}$ be a complex homogeneous
function of weight $w$. Note that $\mathrm{V}$ can be canonically
treated as a (real) graded space (of rank $2\mathbf{d}$, where
$\mathbf{d}$ is the rank of $\mathrm{V}$), simply by assigning
weights $w^{\alpha}$ to real coordinates $x^{\alpha}$, $y^{\alpha}$
such that $z^{\alpha}= x^{\alpha}+ \sqrt{-1} \, y^{\alpha}$ are
complex graded coordinates of weight $w^{\alpha}$. Let us denote this
graded space by $\mathrm{V}_{\mathbb{R}}$. Clearly, the real and
imaginary parts of $\Re\phi$, $\Im\phi:\mathrm{V}_{\mathbb
{R}}\rightarrow\mathbb{R}$ are of weight $w$, in the sense of
\reftext{Definition~\ref{def:hgm_real}}, with respect to this graded structure.
Thus, in view of \reftext{Lemma~\ref{lem:hol_hmg_fun_R}}, $\Re\phi$ and $\Im
\phi$ are real polynomials (homogeneous of weight $w$) in $x^{\alpha
}$ and $y^{\alpha}$. We conclude that $\phi$ is a polynomial in
$z^{\alpha}$ and $\bar{z}^{\alpha}$ with complex coefficients,
homogeneous of weight $w$ with respect to the non-standard gradation on
$\mathbb{C}[z^{\alpha}, \bar{z}^{\alpha}]$ in which the weight of
$z^{\alpha}$ and $\bar{z}^{\alpha}$ is $w^{\alpha}$.

To end the proof it amounts to show that $\phi$ does not depend on the
conjugate variables $\bar{z}^{\alpha}$. Under an additional
assumption that $\phi$ is holomorphic, this is immediate. For future
purposes we would like, however, to assume only the smoothness of $\phi
$. The argument will be inductive with respect to the weight $w$. Cases
$w=0$ and $w=1$ are trivial. For a general $w$ let us fix an index
$\alpha$, and denote $\phi=\phi(z^{\alpha},z^{\beta}, \bar
{z}^{\gamma})$ where $\beta\neq\alpha$. Denote by $\phi_{\alpha
}$ the derivative of $\phi$ with respect to $z^{\alpha}$. Using the
homogeneity of $\phi$ we easily get that for every $\xi\in\mathbb{C}$
\begin{align*}
\phi_{\alpha}(\xi^{w^{\alpha}}z^{\alpha}, \xi^{w^{\beta}}
z^{\beta}, \bar{\xi}^{w^{\gamma}}\bar{z}^{\gamma})&=\lim_{|h|\to
0}\frac{\phi(\xi^{w^{\alpha}}z^{\alpha}+h,\xi^{w^{\beta}}
z^{\beta},\bar{\xi}^{w^{\gamma}}\bar{z}^{\gamma})-\phi(\xi
^{w^{\alpha}}z^{\alpha}, \xi^{w^{\beta}} z^{\beta},\bar{\xi
}^{w^{\gamma}}\bar{z}^{\gamma})}{h}
\\
&=\lim_{|h'|\to0}\frac{\phi(\xi^{w^{\alpha}}(z^{\alpha}+h'), \xi
^{w^{\beta}} z^{\beta},\bar{\xi}^{w^{\gamma}}\bar{z}^{\gamma
})-\phi(\xi^{w^{\alpha}}z^{\alpha},\xi^{w^{\beta}} z^{\beta
},\bar{\xi}^{w^{\gamma}}\bar{z}^{\gamma})}{\xi^{w^{\alpha}}\cdot
h'}\\
&=\lim_{|h'|\to0}\frac{\xi^{w}\cdot\phi(z^{\alpha}+h', z^{\beta
},\bar{z}^{\gamma})-\xi^{w}\cdot\phi(z^{\alpha}, z^{\beta},\bar
{z}^{\gamma})}{\xi^{w^{\alpha}}\cdot h'}\\
&=\xi^{w-w^{\alpha}}\phi
_{\alpha}(\xi^{w^{\alpha}}z^{\alpha}, \xi^{w^{\beta}} z^{\beta
},\bar{\xi}^{w^{\gamma}}\bar{z}^{\gamma})\ .
\end{align*}
In other words, $\phi_{\alpha}$ is homogeneous of weight $w-w^{\alpha
}<w$ and thus, by the inductive assumption, a~homogeneous polynomial of
weight $w-w^{\alpha}$ in variables $(z^{\alpha},z^{\beta})$. We conclude that
$\phi=\psi+\eta$, where $\psi$ is a homogeneous polynomial of
weight $w$ in variables $(z^{\alpha},z^{\beta})$ and $\eta$ is a homogeneous
polynomial of weight $w$ in variables $z^{\beta}$ and $\bar
{z}^{\gamma}$ where $\beta\neq\alpha$. Repeating the above
reasoning several times for other indices and the polynomial $\eta$ we
will show that
$\phi=\phi'+\eta'$, where $\phi'$ is a homogeneous polynomial of
weight $w$ in variables $z^{\gamma}$ and $\eta'$ is a homogeneous
polynomial of weight $w$ in variables $\bar{z}^{\gamma}$. In such a
case $\eta'$ should be a complex homogeneous function of weight $w$ as
a difference of two complex homogeneous functions $\phi$ and $\psi'$,
both of weight $w$. However, since $\eta'$ is a homogeneous polynomial
in $\bar{z}^{\gamma}$ we have
\[
\eta'(\bar{\theta}^{w^{\gamma}}\bar{z}^{\gamma})=\bar{\theta
}^{w}\cdot\eta'(\bar{z}^{\gamma})\ ,
\]
hence the only possibility for $\eta'$ to be complex homogeneous of
weight $w$ is $\eta'\equiv0$. This ends the proof. 
\end{proof}

Analogously to the real case, we can define a complex graded bundle as
a smooth fiber bundle with the typical fiber being a complex graded
space. In case that the base possesses a complex manifold structure,
and that the local trivializations are glued by holomorphic functions
(in particular, the total space of the bundle is a complex manifold
itself) we speak about holomorphic graded bundles.

\begin{definition}
\label{def:cplx_gr_bundle}
A \emph{complex graded bundle of rank $\mathbf{d}$} is a smooth fiber
bundle $\tau: E\to M$ over a real smooth manifold $M$ with the typical
fiber $\mathbb{C}^{\mathbf{d}}$ considered as a complex graded space
of rank $\mathbf{d}$. Equivalently, $\tau$ admits local
trivializations $\phi_{U}: \tau^{-1}(U)\to U \times\mathbb
{C}^{\mathbf{d}}$ such that transition functions $g_{UU'}(q):= \phi
_{U'}\circ\phi_{U}^{-1}|_{\{q\}\times\mathbb{C}^{\mathbf{d}}}:
\mathbb{C}^{\mathbf{d}}\to\mathbb{C}^{\mathbf{d}}$ are isomorphism
of complex graded spaces smoothly depending on $q\in U\cap U'$. If $M$
and $E$ are complex manifolds and $g_{UU'}$ are holomorphic functions
of $q$, then $\tau$ is called a \emph{holomorphic graded bundle}.

By a \emph{degree} of a complex (holomorphic) graded bundle we shall
understand the degree of its typical fiber~$\mathbb{C}^{\mathbf{d}}$.

A \emph{morphism of complex graded bundles} is defined as a
fiber-bundle morphism being a complex graded-space morphism on fibers.
A \emph{morphism of holomorphic graded bundles} is a holomorphic map
between the total spaces of the considered holomorphic graded bundles
being simultaneously a morphism of complex graded bundles.
Clearly, complex (holomorphic) graded bundles together with their
morphisms form a \emph{category}.
\end{definition}
%
\begin{remmark}\label{rem:complexVSholomorphic}
Note that a complex graded bundle $\tau: E\rightarrow M$ in which $E$
and $M$ are complex manifolds and the projection $\tau$ is a
holomorphic map needs not to be a holomorphic graded bundle. We also
need to assume that the action $h$ is also holomorphic, i.e. the
complex structure on each fiber is actually the restriction of the
holomorphic structure of $E$. To see this consider a complex rank $1$
vector bundle $C \subset E:=\mathbb{C}^{*}\times\mathbb{C}^{2}$
($\mathbb{C}^{*} = \mathbb{C}\setminus\{0\}$) given
in natural holomorphic coordinates $(x; y^{1}, y^{2})$ on $E$ by the equation
\[
C = \{(x; y^{1}, y^{2}): x y^{1} + \bar{x} y^{2} = 0, x\in\mathbb
{C}^{*} \}.
\]
We shall construct a degree $2$ complex (but not holomorphic) graded
bundle structure on $E$. Set $Y^{1}:= x y^{1} + \bar{x} y^{2}$ and
$Y^{2} := -x y^{1} +\bar{x} y^{2}$. We may take $(x; Y^{1}, Y^{2})$ as
a global coordinate system on $E$ and assign weights $1$, $2$ to
$Y^{1}$, $Y^{2}$, respectively, to define a complex graded bundle
structure on the fibration $\tau: E\rightarrow\mathbb{C}^{*}$. Then
$C$ coincides with the core of $E$, which in every holomorphic graded
bundle should be a complex submanifold of the total space. However, $C$
is not a complex submanifold of $E$, and thus it is impossible to find
a homogeneous holomorphic atlas on $E$. The associated action by
homotheties $h: \mathbb{C}\times E \rightarrow E$ reads as
\[
h(\xi, (x; y^{1}, y^{2})) = (x; \frac{1}{2} (\xi+\xi^{2}) y^{1} +
\frac{\bar{x}}{2x}(\xi-\xi^{2}) y^{2}, \frac{x}{\bar{x}} (\xi
-\xi^{2})y^{1} + \frac{1}{2} (\xi+\xi^{2}) y^{2}).
\]
This action is smooth but not holomorphic, hence it induces a complex
but not holomorphic graded bundle structure according to the forthcoming
\reftext{Theorems~\ref{thm:equiv_holom} and \ref{thm:equiv_complex}}.
\end{remmark}

In what follows, to avoid possible confusions, we will use notation
$\tau:E\rightarrow M$ for complex and smooth graded bundles and $\tau
:F\rightarrow N$ for holomorphic graded bundles.

\begin{example}\label{ex:hol_jet_bundle}
The holomorphic jet bundle $\mathrm{J}^{k}N$ is a canonical example of
a holomorphic (and thus also complex) graded bundle. This fact is
justified analogously to the real case (see Ex. \ref{ex:TkM}).
\end{example}

Finally, we can rewrite \reftext{Definition~\ref{def:hgm_str}} in the complex context.

\begin{definition}\label{def:hgm_str_complex}
A \emph{complex} (respectively, \emph{holomorphic}) \emph
{homogeneity structure} on a smooth (resp., complex) manifold $M$ is a
smooth (resp., holomorphic) action of the multiplicative monoid
$(\mathbb{C},\cdot)$
\[
h:\mathbb{C}\times M \longrightarrow M\ .
\]
A \emph{morphism} of two complex (resp., holomorphic) homogeneity
structures $(M_{1},h^{1})$ and $(M_{2},h^{2})$ is a smooth (resp.,
holomorphic) map $\Phi:M_{1}\rightarrow M_{2}$ intertwining the
actions $h^{1}$ and $h^{2}$, i.e.,
\[
\Phi(h^{1}_{\xi}(p))=h^{2}_{\xi}(\Phi(p)),
\]
for every $p\in M_{1}$ and every $\xi\in\mathbb{C}$. Clearly,
complex (resp., holomorphic) homogeneity structures with their
morphisms form a \emph{category}.
\end{definition}

It is clear, that with every complex (holomorphic) graded bundle $\tau
:E\rightarrow M$ one can associate a natural complex (holomorphic)
homogeneity structure $h^{E}:\mathbb{C}\times E\rightarrow E$ defined
fiber-wise by the canonical $(\mathbb{C},\cdot)$ actions $h^{\mathrm
{V}}$ where $\mathrm{V}=\tau^{-1}(p)$ for every $p\in M$. We shall
call it the action by \emph{homotheties} of $E$. In the remaining part
of this section we shall study the relations between the notions of a
homogeneity structure and a graded bundle in the complex and
holomorphic settings. Our goal is to obtain analogs of \reftext{Theorem~\ref{thm:eqiv_real}} in these two situations.

\paragraph*{A holomorphic action of the monoid of complex numbers}
In the holomorphic setting the results of \reftext{Theorem~\ref{thm:eqiv_real}}
have their direct analog.

\begin{theorem}\label{thm:equiv_holom}
The categories of (connected) holomorphic graded bundles and
(connected) holomorphic homogeneity structures are equivalent. At the
level of objects this equivalence is provided by the following two constructions
\begin{itemize}
\item With every holomorphic graded bundle $\tau:F\rightarrow N$ one
can associate the holomorphic homogeneity structure $(F,h^{F})$, where
$h^{F}$ is the action by the homotheties of $F$.
\item Given a holomorphic homogeneity structure $(N,h)$, the map
$h_{0}:N\rightarrow N_{0}:=h_{0}(N)$ provides $N$ with a canonical
structure of a holomorphic graded bundle such that $h$ is the related action by homotheties.
\end{itemize}
At the level of morphisms: every morphism of holomorphic graded bundles
is a morphism of the related holomorphic homogeneity structures and,
conversely, every morphism of holomorphic homogeneity structures
respects the related canonical holomorphic graded bundle structures.
\end{theorem}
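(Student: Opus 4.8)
The plan is to mirror, as closely as possible, the proof strategy sketched for Theorem~\ref{thm:eqiv_real}, checking at each step that the real arguments go through holomorphically. The passage from a holomorphic graded bundle $\tau:F\to N$ to a holomorphic homogeneity structure $(F,h^F)$ is immediate: $h^F$ is glued from the local model actions $h^{\mathrm{V}}(\xi,(z^\alpha_w))=(\xi^w z^\alpha_w)$, which are polynomial in $\xi$ and in the $z^\alpha_w$, hence holomorphic; the cocycle maps $g_{UU'}$ are holomorphic isomorphisms of complex graded spaces, so they intertwine the local actions and the global $h^F$ is well defined and holomorphic. Functoriality on morphisms is the observation already recorded before Definition~\ref{def:hgm_str_complex}: a graded-bundle morphism is fiberwise a complex graded-space morphism, and such morphisms intertwine homotheties by the defining polynomial homogeneity.

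The substance is the converse: given a holomorphic homogeneity structure $h:\mathbb{C}\times N\to N$, produce a holomorphic graded bundle structure on $N$ over $N_0:=h_0(N)$. First I would record the elementary structural facts: $h_1=\mathrm{id}$, $h_\xi\circ h_\zeta=h_{\xi\zeta}$, so each $h_\xi$ with $\xi\neq0$ is a biholomorphism, $h_0$ is an idempotent holomorphic retraction onto the complex submanifold $N_0$, and $h_0\circ h_\xi=h_0$. Then, following the real proof, fix $k$ large (to be pinned down locally by dimension count) and define the map $j^k:N\to\mathrm{J}^kN$ sending $p$ to the $k$th jet of the holomorphic curve $\xi\mapsto h_\xi(p)$, which is genuinely a holomorphic curve $\Delta_R\to N$ for any $R$ (holomorphicity in $\xi$ is part of the hypothesis, so we are in the germ setting of the holomorphic jet bundle $\mathrm{J}^kN\simeq\mathrm{T}^kN_{\mathbb{R}}$ described above). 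The key algebraic identity is the equivariance $j^k\circ h_\xi=$ (the holomorphic-jet action of $\xi$ on $\mathrm{J}^kN$) $\circ\,j^k$, coming from $h_\xi\circ h_\zeta=h_{\xi\zeta}$, together with the fact that $j^k$ is a holomorphic section-like embedding over $N_0$. One then shows $j^k$ is a holomorphic immersion, indeed an embedding onto a graded subbundle of $\mathrm{J}^kN$ stable under the jet-level $(\mathbb{C},\cdot)$-action, and transports the canonical graded-bundle structure of $\mathrm{J}^kN$ (Ex.~\ref{ex:hol_jet_bundle}) back to $N$. Finally, one checks that the original $h$ coincides with the homotheties of the resulting graded bundle; this is forced because both actions agree after composing with the embedding $j^k$ into $\mathrm{J}^kN$, where the action is the standard homothety action in adapted coordinates.

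For morphisms of holomorphic homogeneity structures $\Phi:(N_1,h^1)\to(N_2,h^2)$: intertwining gives $\Phi\circ h^1_0=h^2_0\circ\Phi$, so $\Phi$ maps fibers over $N_{1,0}$ to fibers over $N_{2,0}$; restricting to a fiber and expanding $\Phi\circ h^1_\xi=h^2_\xi\circ\Phi$ in powers of $\xi$ shows, via Lemma~\ref{lem:hol_hmg_fun}, that in graded coordinates $\Phi$ is a weight-respecting holomorphic polynomial on fibers, i.e.\ a morphism of holomorphic graded bundles. Conversely the graded-bundle morphisms intertwine homotheties as noted. Combining the two constructions and checking they are mutually inverse (on objects, $N\mapsto(N,h^N)\mapsto$ its homothety graded bundle returns the original structure by the last point of the previous paragraph; on $F$, analogously) gives the categorical equivalence.

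The main obstacle I expect is verifying that $j^k:N\to\mathrm{J}^kN$ is a holomorphic \emph{embedding} onto a submanifold carrying an induced graded structure, uniformly over the base — exactly the "crucial and difficult" part flagged after Theorem~\ref{thm:eqiv_real}. Concretely one must show that for $k$ at least the degree of the graded bundle (which one does not yet know one has!), the jet coordinates $z^{j,(0)},\dots,z^{j,(k)}$ pulled back along $j^k$ separate points and have independent differentials near each point, and that the image is cut out by weight-homogeneous holomorphic equations so that $\mathrm{J}^kN$ induces graded coordinates on $N$. In the real case this rests on Lemma~\ref{lem:hol_hmg_fun_R}; here one leans on Lemma~\ref{lem:hol_hmg_fun} and the identification $\mathrm{J}^kN\simeq\mathrm{T}^kN_{\mathbb{R}}$ as complex manifolds, so that holomorphicity is automatically carried along by the canonical flip $\kappa_k$ and the functoriality diagram. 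A secondary technical point is the local choice of $k$ and the connectedness hypothesis, needed so that the fiber rank $\mathbf{d}$ is constant; this is handled exactly as in the real case.
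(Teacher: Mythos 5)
Your blueprint is the paper's blueprint — embed $N$ into $\mathrm{J}^kN$ via $q\mapsto\mathbf{j}^k_{\xi=0}h(\xi,q)$ and pull back the canonical graded structure — but the step you yourself flag as "the main obstacle" is left unresolved, and that is a genuine gap. You worry, correctly, that one does not yet know the degree $k$ or that the jet map is an embedding, and you propose to attack this by redoing the separation-of-points and independent-differentials analysis "as in the real case". The paper does something different and much cheaper: it restricts $h$ to the submonoid $(\mathbb{R},\cdot)\subset(\mathbb{C},\cdot)$ and invokes Theorem~\ref{thm:eqiv_real} as a black box. This immediately yields a \emph{real} graded bundle structure on $h_0\colon N\to N_0$, hence the degree $k$, and it yields that $\phi^{\mathbb{R}}\colon N_{\mathbb{R}}\to\mathrm{T}^kN_{\mathbb{R}}$ is a topological embedding; under the identification $\mathrm{J}^kN\simeq\mathrm{T}^kN_{\mathbb{R}}$ the map $\phi^{\mathbb{C}}$ is the same map, so it is a topological embedding which is moreover holomorphic, and its image is therefore a complex submanifold. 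This completely dissolves the circularity you point out ("which one does not yet know one has!"), and you never take this step.

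The second omission is that even granting the embedding, passing from "homothety-invariant complex submanifold of $\mathrm{J}^kN$" to "holomorphic graded subbundle" is not automatic and is where the new holomorphic-specific work of the paper lives: Lemma~\ref{lem:M0_holom} (the image of a holomorphic idempotent is a complex submanifold — you assert this for $N_0$ without proof; it needs a constant-rank argument), Lemma~\ref{lem:complex_gr_sspace} (an $h^{\mathrm{V}}$-invariant complex submanifold of a complex graded space is a complex graded subspace, proved by propagating the independence of the differentials $\mathrm{d}z^{\alpha}_w$ from a neighbourhood of $0$ to the whole fiber using homogeneity), and Corollary~\ref{cor:complex_gr_subbundle} (a complex graded subbundle of a holomorphic graded bundle which is a complex submanifold is a holomorphic graded subbundle). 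You state that "the image is cut out by weight-homogeneous holomorphic equations so that $\mathrm{J}^kN$ induces graded coordinates on $N$" as something "one must show", but supply no argument; these three lemmas are exactly that argument and they are the nontrivial content beyond the real case. Your treatment of the easy direction and of morphisms (via Lemma~\ref{lem:hol_hmg_fun}) matches the paper and is fine.
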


To prove the above theorem we will need two technical results.

\begin{lemma}\label{lem:M0_holom}
Let $N$ be a connected complex manifold and let $\Phi: N\rightarrow N$
be a holomorphic map satisfying $\Phi\circ\Phi=\Phi$. Then the
image $N_{0}:=\Phi(N)$ is a complex submanifold of $N$.
\end{lemma}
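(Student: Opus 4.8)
The statement is the smooth/holomorphic analog of the elementary fact that the image of an idempotent linear map is a subspace; the natural approach is to show that $N_0$ is locally cut out by the vanishing of a submersion, using $\Phi$ itself to build the relevant coordinates. First I would fix a point $p\in N_0$, so $\Phi(p)=p$, and examine the tangent map $\mathrm{T}_p\Phi:\mathrm{T}_pN\to\mathrm{T}_pN$. Differentiating the relation $\Phi\circ\Phi=\Phi$ at $p$ gives $(\mathrm{T}_p\Phi)^2=\mathrm{T}_p\Phi$, so $P:=\mathrm{T}_p\Phi$ is a $\mathbb{C}$-linear projector on $\mathrm{T}_pN$ (here holomorphicity of $\Phi$ ensures $\mathrm{T}_p\Phi$ is $\mathbb{C}$-linear, which is what keeps everything in the holomorphic category). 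Set $r=\operatorname{rank}_\mathbb{C}P$; the plan is to prove that, near $p$, $N_0$ is an $r$-dimensional complex submanifold with tangent space $\operatorname{im}P$ at $p$.

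The key step is a constant-rank argument. Because $\Phi$ is idempotent, at \emph{every} point $q\in N_0$ the map $\mathrm{T}_q\Phi$ is again a projector, and on a connected manifold the rank of $\mathrm{T}_q\Phi$ for $q$ ranging over $N_0$ is constant — one can see this either by noting that $q\mapsto\operatorname{rank}\mathrm{T}_q\Phi$ is lower semicontinuous in general and that idempotency forces it to also be upper semicontinuous along $N_0$ (the image of a projector cannot jump up under small perturbation once the perturbation is still a projector), or more cleanly by observing that $\operatorname{im}\mathrm{T}_q\Phi=\mathrm{T}_q\Phi(\mathrm{T}_qN)$ equals the image of $\mathrm{T}_q(\Phi|_{\text{near }q})$ and varies continuously in a way pinned down by $\Phi\circ\Phi=\Phi$. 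With constant rank $r$ along $N_0$, I would instead argue directly: choose holomorphic coordinates $(z^1,\dots,z^n)$ centered at $p$ in which $P=\mathrm{T}_p\Phi$ is the standard projection $(z^1,\dots,z^n)\mapsto(z^1,\dots,z^r,0,\dots,0)$. Then the map
\[
\Psi:=\big(z^1\circ\Phi,\ \dots,\ z^r\circ\Phi,\ z^{r+1}-z^{r+1}\circ\Phi,\ \dots,\ z^n-z^n\circ\Phi\big)
\]
is holomorphic near $p$, and its differential at $p$ is the identity by the normalization of $P$; hence by the holomorphic inverse function theorem $\Psi$ is a biholomorphism of a neighborhood $U$ of $p$ onto a neighborhood of $0$. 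In the new coordinates $w^a:=\Psi^a$, the idempotency $\Phi\circ\Phi=\Phi$ translates into: a point lies in $\Phi(U)$ iff $w^{r+1}=\dots=w^n=0$ (for $q\in N_0$ one has $z^i\circ\Phi(q)=z^i(q)$, so $w^{r+1}(q)=\dots=w^n(q)=0$; conversely if these coordinates vanish, a short computation using $\Phi\circ\Phi=\Phi$ shows $q=\Phi(q)$). This exhibits $N_0\cap U$ as a complex coordinate slice, hence a complex submanifold.

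The main obstacle I anticipate is making the constant-rank / normalization step fully rigorous without circularity: one must be careful that the coordinates diagonalizing $\mathrm{T}_p\Phi$ are chosen \emph{before} invoking the inverse function theorem, and that the verification "$w^{r+1}=\dots=w^n=0 \Rightarrow q\in N_0$" genuinely uses $\Phi\circ\Phi=\Phi$ rather than being assumed. A clean way to organize this is: (i) reduce to showing each point of $N_0$ has a neighborhood in which $N_0$ is a slice; (ii) at such a point, use $(\mathrm{T}_p\Phi)^2=\mathrm{T}_p\Phi$ to split $\mathrm{T}_pN=\operatorname{im}P\oplus\ker P$ and pick adapted holomorphic coordinates; (iii) apply the inverse function theorem to $\Psi$ as above; (iv) check the slice description. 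Connectedness of $N$ is used only to guarantee $r$ is the same at all points of $N_0$, so that the resulting submanifold has a well-defined dimension — exactly the role played by connectedness elsewhere in the paper. I would also remark that the analogous statement in the smooth category holds verbatim with the same proof (replacing "holomorphic" by "smooth" and the complex inverse function theorem by the real one), which is in fact what gets used implicitly in the real case of Theorem~\ref{thm:eqiv_real}.
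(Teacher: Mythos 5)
Your proof is correct, but it takes a genuinely different route from the paper's. The paper transplants the smooth argument (Theorem 1.13 of the Kol\'{a}\v{r}--Michor--Slov\'{a}k reference it cites): one first shows that $\mathrm{T}\Phi$ has constant rank on a whole neighbourhood of $N_0$ in $N$, and then invokes the constant rank theorem for holomorphic maps to put $\Phi$ in normal form near each fixed point. You instead linearize $\Phi$ directly at a fixed point $p$: your chart $\Psi$ satisfies the intertwining identity $\Psi\circ\Phi=\pi\circ\Psi$, where $\pi(w^1,\dots,w^n)=(w^1,\dots,w^r,0,\dots,0)$, because $z^i\circ\Phi\circ\Phi=z^i\circ\Phi$ for $i\le r$ and $z^i\circ\Phi-z^i\circ\Phi\circ\Phi=0$ for $i>r$. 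Hence in the $w$-coordinates $\Phi$ \emph{is} the linear projector $\mathrm{T}_p\Phi$, and its fixed-point set --- which is all of $N_0$, since idempotency gives $\Phi(N)=\operatorname{Fix}(\Phi)$ --- is the slice $\{w^{r+1}=\dots=w^n=0\}$; this makes your ``short computation'' in the converse direction completely transparent: $w^{r+1}(q)=\dots=w^n(q)=0$ forces $\Psi(\Phi(q))=\Psi(q)$, hence $\Phi(q)=q$ by injectivity of $\Psi$ (after shrinking so that $\Phi$ maps the chart domain into itself). This Cartan-style argument for holomorphic retractions is more self-contained than the paper's: it needs only the inverse function theorem, with no constant rank theorem and no appeal to the smooth-category result. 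Two small remarks: your preliminary paragraph on constancy of $\operatorname{rank}\mathrm{T}_q\Phi$ along $N_0$ is not actually needed for the local slice construction (the rank enters only at the single point $p$), and the semicontinuity sketch there is the one shaky spot of the write-up --- the clean upper bound on the rank near $N_0$ comes from $\mathrm{T}_q\Phi=\mathrm{T}_{\Phi(q)}\Phi\circ\mathrm{T}_q\Phi$, not from a perturbation argument about projectors; and, as you correctly note, connectedness of $N_0=\Phi(N)$ is used only to make the local dimension $r$ globally constant.
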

\begin{proof}
An analogous result for smooth manifolds is given in Theorem 1.13 in
\cite{Kolar_Michor_Slovak_nat_oper_diff_geom_1993}. Its proof can be
almost directly rewritten in the complex setting. Namely, from the
proof given in \cite{Kolar_Michor_Slovak_nat_oper_diff_geom_1993} we
know that there is an open neighborhood $U$ of $N_{0}$ in $N$ such that
the tangent map $\mathrm{T}_{p} \Phi: \mathrm{T}_{p} N_{\mathbb
{R}}\rightarrow\mathrm{T}_{\Phi(p)} N_{\mathbb{R}}$ has a constant
rank while $p$ varies in $U$. Therefore, due to the identification
$T_{p}N_{\mathbb{R}}\approx\mathrm{T}'_{p} N$, the map $\mathrm
{T}'_{p} \Phi: \mathrm{T}'_{p} N \rightarrow\mathrm{T}'_{\Phi(p)}
N$ also has a constant rank.

Now take any $q\in N_{0}$, so $\Phi(q)=q$. From the constant rank
theorem for complex manifolds (see e.g., \cite{Gauthier_SCV} or \cite
{Kaup_book}) we can find two charts $(O, v)$ and $(\underline{O},
\underline{v})$ on $N$, both centered at $q$, such that $\underline
{v}\circ\Phi\circ v^{-1}$ is a projection of the form $(z^{1}, \ldots
, z^{n})\mapsto(z^{\underline{1}}, \ldots,z^{\underline{n}}, 0,
\ldots, 0)$, where $\underline{n}\leq n$ is the rank of $\mathrm
{T}'_{q} \Phi$. We conclude that $O\cap N_{0}$ is a complex
submanifold of $N$ of dimension $\underline{n}$. Since $q\in N_{0}$
was arbitrary and $N_{0}$ is connected, the assertion follows.

\end{proof}

\begin{lemma} \label{lem:complex_gr_sspace}
Let $\mathrm{V}$ be a complex graded space and $\mathrm{V}'\subset
\mathrm{V}$ a complex submanifold invariant with respect to the action
of the homotheties of $\mathrm{V}$, i.e., $h^{\mathrm{V}}_{\xi
}(\mathrm{V}')\subset\mathrm{V}'$ for any $\xi\in\mathbb{C}$.
Then $\mathrm{V}'$ is a complex graded subspace of $\mathrm{V}$.
\end{lemma}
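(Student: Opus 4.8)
The plan is to exploit the $(\mathbb{C},\cdot)$-action by homotheties to linearize the situation, imitating the classical argument that a homogeneous submanifold of a vector space is a linear subspace, but adapted to the graded setting. Write $\mathrm{V}\cong\mathbb{C}^{|\mathbf{d}|}$ with complex graded coordinates $(z^{\alpha}_{w^{\alpha}})$, so that $h^{\mathrm{V}}_{\xi}(z^{\alpha}_{w})=\xi^{w}z^{\alpha}_{w}$. First I would observe that $0\in\mathrm{V}'$: indeed, $\mathrm{V}'$ is closed (a submanifold, but more relevantly it is $h$-invariant, and $\lim_{\xi\to 0}h^{\mathrm{V}}_{\xi}(v)=0$ for any $v$), so picking any $v\in\mathrm{V}'$ and letting $\xi\to 0$ along reals we get $0\in\overline{\mathrm{V}'}$; since $\mathrm{V}'$ is a submanifold invariant under all $h_{\xi}$ and the only fixed point of the action is $0$, a short argument shows $0\in\mathrm{V}'$. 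I would then pass to the weight vector field $\Delta_{\mathrm{V}}=\sum_{\alpha}w^{\alpha}z^{\alpha}_{w^{\alpha}}\partial_{z^{\alpha}_{w^{\alpha}}}$: invariance of $\mathrm{V}'$ under $h^{\mathrm{V}}_{\xi}$ for $\xi=e^{t}$, $t\in\mathbb{R}$, means $\Delta_{\mathrm{V}}$ is tangent to $\mathrm{V}'$ along $\mathrm{V}'$.

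The key step is to analyze $\mathrm{V}'$ near $0$ using the holomorphic ideal $\mathcal{I}$ of germs of holomorphic functions on $\mathrm{V}$ vanishing on $\mathrm{V}'$ at $0$. Since $\mathrm{V}'$ is a complex submanifold through $0$ and is invariant under the holomorphic maps $h^{\mathrm{V}}_{\xi}$ (here one uses that the action is by holomorphic homotheties, so that $h^{\mathrm{V}}_{\xi}$ is biholomorphic for $\xi\neq 0$ and the invariance extends to $\xi=0$ by continuity), the ideal $\mathcal{I}$ is invariant under pullback by $h^{\mathrm{V}}_{\xi}$. Decomposing any $f\in\mathcal{I}$ into its homogeneous components with respect to the grading (where $z^{\alpha}_{w}$ has weight $w$) — that is, expanding $f$ as a convergent series and grouping monomials $z^{\alpha_{1}}_{w_{1}}\cdots z^{\alpha_{j}}_{w_{j}}$ by their total weight $w_{1}+\dots+w_{j}$ — the invariance $(h^{\mathrm{V}}_{\xi})^{*}f=f\ (\mathrm{mod}\ \mathcal{I})$ forces each homogeneous component of $f$ to lie in $\mathcal{I}$ as well (standard Vandermonde/linear-independence-of-characters argument applied to $\xi\mapsto\xi^{m}$). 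Hence $\mathcal{I}$ is generated by weight-homogeneous holomorphic functions, which by Lemma~\ref{lem:hol_hmg_fun} are weight-homogeneous polynomials in the $z^{\alpha}_{w}$.

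It then remains to upgrade "cut out by homogeneous polynomial equations" to "is a graded subspace", i.e.\ to exhibit a graded coordinate change after which $\mathrm{V}'$ becomes a coordinate subspace. Since $\mathrm{V}'$ is smooth at $0$, its tangent space $T_{0}\mathrm{V}'\subset T_{0}\mathrm{V}$ is a complex subspace; being $\Delta_{\mathrm{V}}$-invariant (equivalently invariant under the linearized action $dh^{\mathrm{V}}_{\xi}|_{0}$, which is diagonal with eigenvalue $\xi^{w}$ on $\partial_{z^{\alpha}_{w}}$), it is spanned by a subset of the $\partial_{z^{\alpha}_{w}}$ after a weight-preserving linear change of coordinates. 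After this linear change, split the coordinates into those "along" $T_{0}\mathrm{V}'$ and those "transverse"; the homogeneous generators of $\mathcal{I}$ express each transverse coordinate, modulo higher-weight terms, as a polynomial in the along-coordinates, and one inductively (on weight) absorbs these into a graded change of coordinates of the form \eqref{eqn:graded_transf} to straighten $\mathrm{V}'$ to $\{z^{\alpha}=0:\alpha\ \text{transverse}\}$; the restrictions of the remaining coordinates then give graded coordinates on $\mathrm{V}'$, which is therefore biholomorphic to some $\mathbb{C}^{\mathbf{d}'}$ and is a complex graded subspace.

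I expect the main obstacle to be this last straightening step: one must check that the iterative coordinate corrections can always be chosen within the class of graded transformations \eqref{eqn:graded_transf} (polynomial, weight-homogeneous), and that the process terminates because the degree $k$ of $\mathrm{V}$ is finite — and, more delicately, that connectedness of $\mathrm{V}'$ lets one pass from the local statement near $0$ to all of $\mathrm{V}'$ (here $h$-invariance is again crucial: every point of $\mathrm{V}'$ can be flowed arbitrarily close to $0$, so the local model near $0$ propagates). A clean alternative for this last step is to note that $\mathrm{V}'$, being $h^{\mathrm{V}}$-invariant, inherits a complex homogeneity structure from $\mathrm{V}$, hence by Theorem~\ref{thm:equiv_holom} is itself a holomorphic graded bundle over $h_{0}(\mathrm{V}')=\{0\}$ — i.e.\ a complex graded space — and the inclusion $\mathrm{V}'\hookrightarrow\mathrm{V}$ intertwines the homotheties, so it is a morphism of complex graded spaces; this makes $\mathrm{V}'$ a complex graded subspace essentially by definition, modulo checking the embedding is a graded-space morphism in coordinates.
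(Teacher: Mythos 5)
Your main argument is correct, but it takes a genuinely different route from the paper. The paper's proof works entirely with the ambient graded coordinates $(z^{\alpha}_{w})_{\alpha\in I}$: it picks a subset $I'\subset I$ whose restrictions to $\mathrm{V}'$ have independent differentials at $0$, and then uses the covariance relation $\left\langle \mathrm{d}_{h(\xi,q)}f_{w},(\mathrm{T}h_{\xi})v_{q}\right\rangle=\xi^{w}\left\langle \mathrm{d}_{q}f_{w},v_{q}\right\rangle$ together with the fact that $\mathrm{V}'$ is regenerated from any neighbourhood of $0$ by the (invertible) homotheties to propagate this independence to all of $\mathrm{V}'$, so that $(z^{\alpha}_{w}|_{\mathrm{V}'})_{\alpha\in I'}$ become global graded coordinates on $\mathrm{V}'$. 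You instead work with the vanishing ideal at $0$, decompose its elements into weight-homogeneous components (your Vandermonde argument for why each component again vanishes on $\mathrm{V}'$ is sound), and then straighten $\mathrm{V}'$ to a coordinate subspace by a graded change of coordinates of the form \eqref{eqn:graded_transf}. Your approach yields a slightly stronger normal form ($\mathrm{V}'$ is literally a coordinate subspace in adapted graded coordinates on $\mathrm{V}$), at the cost of the straightening step you rightly identify as delicate; note that this step can be done in one stroke rather than by induction on weight, since homogeneous generators $g_{1},\dots,g_{c}$ of the conormal space at $0$ can simply be adjoined to a complementary subset of the old coordinates to form a new graded system (the paper's remark after \eqref{eqn:graded_transf} gives the invertibility criterion). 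The globalization in both proofs rests on the same mechanism: every point of $\mathrm{V}'$ flows into any neighbourhood of $0$ and back, since $\mathrm{V}'$ is invariant under $h_{\xi}$ for all $\xi$, including $\xi=1/t$.

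One warning: your proposed ``clean alternative'' at the end is circular in the logic of the paper. Theorem~\ref{thm:equiv_holom} is proved \emph{using} Lemma~\ref{lem:complex_gr_sspace} (it is invoked fiberwise on $\mathrm{J}^{k}N|_{N_{0}}$ to show that the image of the embedding is a graded subbundle), so you cannot appeal to that theorem here. Stick with your main ideal-theoretic argument, or with the paper's coordinate-restriction argument.
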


\begin{proof}
Observe first that $0=h^{\mathrm{V}}_{0}(\mathrm{V}')=h^{\mathrm
{V}}_{0}(\mathrm{V})$ lies in $\mathrm{V}'$. Denote by $(z^{\alpha
}_{w})_{\alpha\in I}$ a system of complex graded coordinates on
$\mathrm{V}$. Since $\mathrm{V}'$ is a complex submanifold, we may
choose a subset $I'\subset I$ of cardinality $\dim_{\mathbb
{C}}\mathrm{V}'$ such that the differentials
%
\begin{equation}\label{eqn:differentials_W}
\mathrm{d}_{q} z^{\alpha}_{w}|_{\mathrm{T}'_{q}\ \mathrm
{V}'},\quad\alpha\in I',
\end{equation}
are linearly independent (over $\mathbb{C}$) at $q=0$. In consequence,
the restrictions $(z^{\alpha}_{w}|_{\mathrm{V}'})_{\alpha\in I'}$
form a coordinate system for $\mathrm{V}'$ around $0$. The idea is to
show that these functions form a global graded coordinate system on
$\mathrm{V}'$.

Note that $\mathrm{V}$ has an important property that it can be
recovered from an arbitrary open neighborhood of $0$ by the action of
$h^{\mathrm{V}}$. Since $\mathrm{V}'\subset\mathrm{V}$ is
$h^{\mathrm{V}}$-invariant it also has this property.
As has been already showed, the differentials \reftext{\eqref{eqn:differentials_W}}
are linearly independent for $q\in U\cap\mathrm{V}'$ where $U$ is a
small neighborhood of $0$ in $\mathrm{V}$. Using the equality
\[
\left\langle\mathrm{d}_{h(\xi,q)} f^i_w, (\mathrm{T}h_\xi)
v_q \right\rangle= \xi^{w} \,
\left\langle\mathrm{d}_q f^i_w, v_q\right\rangle
\]
where $f^{i}_{w}: \mathrm{V}\rightarrow\mathbb{C}$ is a function of
weight $w$ and $v_{q}\in\mathrm{T}_{q}\mathrm{V}$, and the property
that $\mathrm{V}'$ is generated by $h^{\mathrm{V}}$ from any
neighborhood of $0$, we conclude that the differentials \reftext{\eqref{eqn:differentials_W}} are linearly independent for any $q\in\mathrm
{V}'$. Thus $(z^{\alpha}_{w}|_{\mathrm{V}'})_{\alpha\in I'}$ is a global
system of graded coordinates for $\mathrm{V}'$. This ends the proof.

\end{proof}

\begin{corollary} \label{cor:complex_gr_subbundle}
Let $\tau': E'\rightarrow M'$ be a complex graded subbundle of a
holomorphic graded bundle $\tau: F \rightarrow M$ such that $E'$ is a
complex submanifold of $F$. Then $E'$ is a holomorphic graded subbundle.
\end{corollary}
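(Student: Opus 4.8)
The plan is to combine the two preceding lemmas with Theorem~\ref{thm:equiv_holom}, passing to connected components if necessary so that all manifolds in sight may be assumed connected. First I would note that $E'$ is invariant under the action $h^{F}$ by homotheties of $F$: by definition $E'$ is, in suitable complex graded trivializations of $F$, locally a product $U\times\mathrm{V}'$ with $\mathrm{V}'\subseteq\mathbb{C}^{\mathbf{d}}$ a complex graded subspace, so each fiber $E'\cap\tau^{-1}(q)$ is a complex graded subspace of $\tau^{-1}(q)$, and such subspaces are visibly preserved by the fiber homotheties; hence $h^{F}_{\xi}(E')\subseteq E'$ for every $\xi\in\mathbb{C}$. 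Set $M':=h^{F}_{0}(E')$. Since $h^{F}_{0}|_{E'}\colon E'\to E'$ is a well-defined holomorphic map with $h^{F}_{0}|_{E'}\circ h^{F}_{0}|_{E'}=h^{F}_{0}|_{E'}$, and $E'$ is a connected complex submanifold of $F$, Lemma~\ref{lem:M0_holom} shows that $M'$ is a complex submanifold of $E'$, and hence of $F$.

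Next I would transport the holomorphic structure onto $E'$. Because $h^{F}$ is holomorphic and restricts to a map $\mathbb{C}\times E'\to E'$ between complex submanifolds, the restriction $h^{E'}:=h^{F}|_{\mathbb{C}\times E'}$ is a holomorphic action of $(\mathbb{C},\cdot)$ on $E'$, i.e.\ a holomorphic homogeneity structure in the sense of Definition~\ref{def:hgm_str_complex}. By Theorem~\ref{thm:equiv_holom}, $h^{E'}$ endows $E'$ with a canonical structure of a holomorphic graded bundle over $h^{E'}_{0}(E')=M'$, whose associated action by homotheties is precisely $h^{E'}$.

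It then remains to check that this structure is the one inherited from $F$, i.e.\ that $E'$ is a holomorphic graded \emph{subbundle}. The inclusion $\iota\colon E'\hookrightarrow F$ is holomorphic and intertwines $h^{E'}$ with $h^{F}$, so by the morphism part of Theorem~\ref{thm:equiv_holom} it is a morphism of holomorphic graded bundles covering the inclusion $M'\hookrightarrow M$. To produce adapted holomorphic graded charts, fix $q\in M'$, choose a holomorphic graded trivialization $\tau^{-1}(U)\xrightarrow{\simeq}U\times\mathbb{C}^{\mathbf{d}}$ of $F$ near $q$, and apply Lemma~\ref{lem:complex_gr_sspace} to the fiber over $q$: the set $E'\cap\tau^{-1}(q)$ is a complex graded subspace $\mathrm{V}'$ of $\mathbb{C}^{\mathbf{d}}$, so after a weight-preserving (hence $\mathbb{C}$-linear and holomorphic) change of graded coordinates we may assume $\mathrm{V}'=\{z^{\alpha}_{w}=0:\alpha\in I\setminus I'\}$ for some $I'\subseteq I$. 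Running the differential argument in the proof of Lemma~\ref{lem:complex_gr_sspace} with the base point as a holomorphic parameter (the differentials $\mathrm{d}\,z^{\alpha}_{w}|_{E'}$, $\alpha\in I'$, remain linearly independent along $E'$, and $h^{F}$-invariance spreads this from a neighborhood of the zero section over each whole fiber), one sees that, after shrinking $U$, the equations $z^{\alpha}_{w}=0$, $\alpha\in I\setminus I'$, cut out $E'\cap\tau^{-1}(U)$; this is the required holomorphic graded subbundle chart.

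The main obstacle is precisely this last step. Lemma~\ref{lem:complex_gr_sspace} is fiberwise, and the real content is a parametrized version of it: verifying that the complex graded subspace structure on the fibers $E'\cap\tau^{-1}(p)$ varies holomorphically with $p$, so that the locally constant choice of $I'$ and the corresponding vanishing equations assemble into a genuine holomorphic graded chart of $F$ adapted to $E'$. Once this is granted, everything else is formal, and the identification of the abstract holomorphic graded bundle structure on $E'$ coming from Theorem~\ref{thm:equiv_holom} with the subbundle structure inside $F$ is immediate because both are determined by the single action $h^{E'}$.
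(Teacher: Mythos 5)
Your final paragraph is essentially the paper's proof, but the route you take to get there has a genuine logical problem and leaves the step you yourself flag as ``the main obstacle'' unproved.

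The problem is circularity. You invoke Theorem~\ref{thm:equiv_holom} twice (once to put an abstract holomorphic graded bundle structure on $E'$ via the restricted action $h^{E'}$, and once, via its morphism part, to see that the inclusion $E'\hookrightarrow F$ is a morphism). But in this paper Corollary~\ref{cor:complex_gr_subbundle} is a lemma \emph{for} Theorem~\ref{thm:equiv_holom}: the theorem's proof embeds $N$ into $\mathrm{J}^{k}N$, uses Lemma~\ref{lem:complex_gr_sspace} fiberwise to see the image is a complex graded subbundle, and then cites precisely this corollary to upgrade it to a holomorphic graded subbundle. So the theorem is not available here. Fortunately your appeal to it is also unnecessary: if you can produce adapted holomorphic graded charts (your last paragraph), that already exhibits $E'$ as a holomorphic graded subbundle, with no need to first manufacture an abstract structure and then compare.

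That leaves the chart construction, which you correctly identify as the real content but only sketch, deferring to ``a parametrized version'' of Lemma~\ref{lem:complex_gr_sspace}. The paper closes this gap as follows. Being a holomorphic graded subbundle is a local property over $M'$, and the transition maps of $E'$ are automatically holomorphic (because $E'$ is a complex submanifold) and polynomial on fibers (by Lemma~\ref{lem:hol_hmg_fun}), so it suffices to build one adapted chart near each $q\in M'$. For that, one picks base coordinates $(x^{a})$ on $M'$ and a subset $I'\subset I$ of the ambient graded fiber coordinates $(z^{\alpha}_{w})$ so that $(x^{a},z^{\alpha}_{w})_{\alpha\in I'}$ is a coordinate system for $E'$ at $q$; the relevant differentials then stay independent on a neighborhood $\tilde U$ of $q$ \emph{in $F$}, which lets the argument of Lemma~\ref{lem:complex_gr_sspace} run on every fiber of $E'$ over $U'':=\tilde U\cap M'$ simultaneously. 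Note the paper does not attempt to arrange $E'$ as the zero set $\{z^{\alpha}_{w}=0,\ \alpha\in I\setminus I'\}$ over a whole neighborhood, as you propose; that would require straightening all fibers at once by a single weight-preserving change of coordinates, which is more than is needed. Restricting the ambient homogeneous coordinates indexed by $I'$ to $E'$ already yields the required holomorphic graded atlas. Without either the (illegitimate) appeal to Theorem~\ref{thm:equiv_holom} or a completed version of this local argument, your proposal does not yet constitute a proof.
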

\begin{proof} First of all the base $M':=M\cap E'$ of $E'$ is a complex
submanifold of $M$. To prove this we apply \reftext{Lemma~\ref{lem:M0_holom}}
with $\Phi= \tau|_{E'}: E'\rightarrow E'$.

Being a holomorphic subbundle is a local property: if any point $q\in
M'$ has an open neighborhood $U'\subset M'$ such that $E'|_{U'}$ is a
holomorphic subbundle, then $E'$ is itself a holomorphic subbundle of
$F$. Indeed, we know that transition maps of $E'$ are holomorphic since
$E'$ is a holomorphic submanifold. Moreover, by \reftext{Lemma~\ref{lem:hol_hmg_fun}}, these maps are also polynomial on fibers.

Thus take $q\in M'$ and denote graded fiber coordinates of $\tau:
F\rightarrow M$ by $(z^{\alpha}_{w})_{\alpha\in I}$. They are
holomorphic functions defined on $F|_{U}$ for some open subset
$U\subset M$, $q\in U$. Let $(x^{a})$ be coordinates on $U'\subset M'$
around~$q$. Take a subset $I'\subset I$ such that functions $(x^{a},
z^{\alpha}_{w})_{\alpha\in I'}$ form a coordinate system for $E$
around~$q$.
Then the differentials ($\alpha\in I'$)
\[
\mathrm{d}_{\tilde{q}} z^{\alpha}_{w}|_{\mathrm{T}'_{q}
E'},\quad\mathrm{d}_{\tilde{q}} x^{a}|_{\mathrm{T}'_{q} E'}
\]
are still linearly independent for $\tilde{q}$ in some open
neighborhood $\tilde{U}$ of $q$ in $F$, possibly smaller than $U$. It
follows from the proof of \reftext{Lemma~\ref{lem:complex_gr_sspace}} that
$(z^{\alpha}_{w})_{\alpha\in I'}$ form a global coordinate system on
each of the fibers of $E'$ over $q'\in U'':= \tilde{U}\cap U'$. Thus
$(x^{a}, z^{\alpha}_{w})_{\alpha\in I'}$ is a graded coordinate
system for the subbundle $E'|_{U''}$ consisting of holomorphic
functions turning it into a holomorphic graded bundle. This finishes
the proof.
\end{proof}

Now we are ready to prove \reftext{Theorem~\ref{thm:equiv_holom}}.

\begin{proof}[Proof of \reftext{Theorem~\ref{thm:equiv_holom}}:]
The crucial step is to show, that a holomorphic action $h:\mathbb
{C}\times N\rightarrow N$ of the multiplicative monoid $(\mathbb
{C},\cdot)$ on a connected complex manifold $N$, determines the
structure of a holomorphic graded bundle on $h_{0}: N\rightarrow
N_{0}:=h_{0}(N)$.

Clearly, since $h$ is holomorphic, so is $h_{0}$. What is more, this
map satisfies $h_{0}\circ h_{0} = h_{0}$ and thus, by \reftext{Lemma~\ref{lem:M0_holom}}, $N_{0}=h_{0}(N)$ is a complex submanifold of $N$. Now
notice that the restriction of $h$ to $\mathbb{R}\times N$ gives an action
\[
h^{\mathbb{R}}: \mathbb{R}\times N\rightarrow N
\]
of the monoid $(\mathbb{R}, \cdot)$. In view of \reftext{Theorem~\ref{thm:eqiv_real}}, $h_{0}:N\rightarrow N_{0}$ is a (real) graded bundle
(cf. the proof of \reftext{Lemma~\ref{lem:hol_hmg_fun}}), say, of degree $k$.

We shall now follow the ideas from the proof of \reftext{Theorem~\ref{thm:eqiv_real}} provided in \cite{JG_MR_gr_bund_hgm_str_2011}. The
crucial step is to embed $N$ into the holomorphic jet bundle $\mathrm
{J}^{k} N$ as a holomorphic graded subbundle. Recall (cf. Ex. \ref{ex:hol_jet_bundle}) that $\mathrm{J}^{k}N$ has a canonical
holomorphic graded bundle structure.
Consider a map
\[
\phi^{\mathbb{C}}:N\rightarrow\mathrm{J}^{k} N, \quad q\mapsto
\mathbf{j}^{k}_{\xi=0} h(\xi, q),\quad\xi\in\mathbb{C},
\]
sending each point $q\in N$ to the $k$th-holomorphic jet
at $\xi=0$ of the holomorphic curve $\mathbb{C}\ni\xi\mapsto h(\xi
,q)\in N$. Clearly, $\phi^{\mathbb{C}}$ is a holomorphic map as a
lift of a holomorphic curve to the holomorphic jet bundle $\mathrm
{J}^{k} N$. The composition of $\phi^{\mathbb{C}}$ with the canonical
isomorphism $\mathrm{J}^{k} N \simeq\mathrm{T}^{k} N_{\mathbb{R}}$
gives the map
\[
\phi^{\mathbb{R}}:N_{\mathbb{R}}\rightarrow\mathrm{T}^{k}
N_{\mathbb{R}}, \quad q\mapsto\mathbf{t}^{k}_{t=0} h(t, q), \quad
t\in\mathbb{R}.
\]
As indicated in the proof of Theorem 4.1 of \cite
{JG_MR_gr_bund_hgm_str_2011}, $\phi^{\mathbb{R}}$ is a topological
embedding naturally related with the real homogeneity structure
$h^{\mathbb{R}}$. Thus also $\phi^{\mathbb{C}}$ is a topological
embedding. Therefore, since $\phi^{\mathbb{C}}$ is also holomorphic,
the image $\widetilde{N}:=\phi^{\mathbb{C}}(N)\subset\mathrm
{J}^{k} N$ is a complex submanifold, biholomorphic with $N$. Let us
denote by $\widetilde{h}: \mathbb{C}\times\widetilde{N}\rightarrow
\widetilde{N}$ the corresponding action on $\widetilde{N}$ induced
from $h$ by means of $\phi^{\mathbb{C}}$. Since $\phi^{\mathbb{C}}$
intertwines the action $h$ and the canonical action by homotheties
\[
h^{\mathrm{J}^{k} N}: \mathbb{C}\times\mathrm{J}^{k} N \rightarrow
\mathrm{J}^{k} N
\]
on the bundle of holomorphic $k$th-jets, the action
$\widetilde{h}$ coincides with the restriction of
$h^{\mathrm{J}^{k} N}$ to $\mathbb{C}\times\widetilde{N}$. Hence,
$\widetilde{N}$ is a complex submanifold of $\mathrm{J}^{k} N$
invariant with respect to the action $h^{\mathrm{J}^{k} N}$.
Using \reftext{Lemma~\ref{lem:complex_gr_sspace}} on each fiber of $\mathrm
{J}^{k} N\big|_{N_{0}}\rightarrow N_{0}$ we conclude that $\widetilde
{N}$ is a complex graded subbundle of $\mathrm{J}^{k} N$.
Since $\widetilde{N}\approx N$ was a complex submanifold of $\mathrm
{J}^{k} N$, it is also a holomorphic graded subbundle of $\mathrm
{J}^{k}N$ due to \reftext{Corollary~\ref{cor:complex_gr_subbundle}}.
Thus we have constructed a canonical holomorphic graded bundle
structure on $N$ starting from a holomorphic homogeneity structure
$(N,h)$. Clearly the action by homotheties $h^{N}$ related with this
graded bundle coincides with the initial action $h$.

The above construction, and the construction of a canonical holomorphic
homogeneity structure $(F,h^{F})$ from a holomorphic graded bundle
$\tau: F\rightarrow N$ are mutually inverse, providing the desired
equivalence of categories at the level of objects. \medskip

To show the equivalence at the level of morphisms consider two
holomorphic graded bundles $\tau_{j}:F_{j}\rightarrow N_{j}$, with
$j=1,2$, and let $h^{j}$, with $j=1,2$, be the related homogeneity
structures. Let $\Phi: F_{1} \rightarrow F_{2}$ be a holomorphic map
such that $\Phi\circ h^{1} = h^{2}\circ\Phi$. It is enough to show
that $\Phi$ is a morphism of holomorphic graded bundles. Since $\Phi$
is holomorphic by assumption it suffices to show that on each fiber
$\Phi: (F_{1})_{p} \rightarrow(F_{2})_{\Phi(p)}$ is a morphism of
complex graded spaces.

Let now $(z^{\alpha}_{w})$ and $(\underline{z}^{\alpha}_{w})$ be
graded coordinates on $(F_{1})_{p}$ and $(F_{2})_{\Phi(p)}$, respectively.
Note that $\Phi^{*}\underline{z}^{\alpha}_{w} = \underline
{z}^{\alpha}_{w} \circ\Phi$ is a $\mathbb{C}$-homogeneous function
on $(F_{1})_{p}$, hence in light of \reftext{Lemma~\ref{lem:hol_hmg_fun}} it is
a homogeneous polynomial in $z^{\alpha}_{w}$. Thus, indeed, $\Phi$
has a desired form.
\end{proof}

\paragraph*{On smooth actions of the monoid $(\mathbb{C}, \cdot)$ on
smooth manifolds}
Our goal in the last paragraph of this section is to study smooth
actions of the monoid $(\mathbb{C},\cdot)$ on smooth manifolds, i.e.,
complex homogeneity structures (see \reftext{Definition~\ref{def:hgm_str_complex}}). Contrary to the holomorphic case, there is no
equivalence between such structures and complex graded bundles. To
guarantee such an equivalence we will need to make additional
assumptions. Informally speaking, the real and the imaginary parts of
the action of $(\mathbb{C},\cdot)$ should be compatible. The
following examples should help to get the right intuitions.

\begin{example}
Consider $M=\mathbb{R}$ and define an action $h:\mathbb{C}\times
M\rightarrow M$ by $h(\xi, y)=(|\xi|^{2}\,y)$, where $y$ is a
standard coordinate on $\mathbb{R}$. Clearly this is a smooth action
of the multiplicative monoid $(\mathbb{C},\cdot)$,
but the fibers $M$ admit no structure of a complex graded bundle. This
is clear from dimensional reasons. Indeed, the base $h_{0}(M)$ is just
a single point $0\in\mathbb{R}$ and thus $M$, as a single fiber,
should admit a structure of a complex graded space. This is impossible
as $M$ is odd-dimensional.
\end{example}

\begin{example} Consider $M=\mathbb{C}$ with a standard coordinate
$z:M\rightarrow\mathbb{C}$ and define a smooth multiplicative action
$h:\mathbb{C}\times M\rightarrow M$ by the formula $h(\xi, z)=|\xi
|^{2} \xi\,z$. We claim that $h$ is not a homothety action related
with any complex graded bundle structure on $M$.

Assume the contrary. The basis of $M$ should be $h_{0}(M)=\{0\}$, i.e.
a single point. Thus $M$ is a complex graded space (a complex graded
bundle over a single point), say, of rank $\mathbf{d}$. Clearly the
restriction $h|_{\mathbb{R}}:\mathbb{R}\times M\rightarrow M$ should
provide $M$ with a structure of a (real) graded space of rank $2\mathbf
{d}$. Observe that $h|_{\mathbb{R}}$ is in fact a homothety action on
$\mathbb{R}^{(0,0,2)}$ and thus we should have $M=\mathbb
{C}^{(0,0,1)}$. In such a case, there should exist a global complex
coordinate $\tilde{z}: M \xrightarrow{\simeq} \mathbb{C}$ which is
homogeneous of degree $3$, and so $h_{\varepsilon_{3}}$, where
$\varepsilon_{3}:=e^{2\pi\sqrt{-1}/3}$ is the third order primitive
root of 1, should be the identity on $M$, as there are no coordinates
on $M$ of other weights. Yet, $h(\varepsilon_{3}, z) = \varepsilon
_{3} z \neq z$, thus a contradiction.
\end{example}

The above examples reveal two important facts concerning a complex
homogeneity structure $h:\mathbb{C}\times M\rightarrow M$. First of
all, the restriction of $h$ to $(\mathbb{R},\cdot)$ makes $M$ a
(real) homogeneity structure. Secondly, the action of the primitive
roots of $1$ on $h|_{\mathbb{R}}$-homogeneous functions allows to
distinguish complex graded bundles among all complex homogeneity structures.

\begin{remmark}\label{rem:induced_action_on_tower}
Let $h:\mathbb{C}\times E^k\rightarrow E^k$ be a complex homogeneity structure such that the restriction $h|_{\mathbb{R}}$ makes $\tau= h_{0}: E^{k} \rightarrow M_{0}$ a (real) graded
bundle of degree $k$. For any $\xi
\in\mathbb{C}$ the action $h_{\xi}$ commutes with the homotheties
$h(t, \cdot)$, $t\in\mathbb{R}$, hence $h_{\xi}: E^{k}\rightarrow
E^{k}$ is a (real) graded bundle morphism, in view of \reftext{Theorem~\ref{thm:eqiv_real}}. Therefore $h$ induces an action of the monoid
$(\mathbb{C}, \cdot)$ on each (real) graded bundle $\tau^{j}:
E^{j}\rightarrow M_{0}$ in the tower \reftext{\eqref{eqn:tower_grd_spaces}}, and
on each core bundle $\hat{E}^{j}$, for $j=1,2,\ldots, k$.
\end{remmark}

These observations motivate the following

\begin{definition}\label{def:nice_hgm_str}
Let $h:\mathbb{C}\times E^{k}\rightarrow E^{k}$ be a complex
homogeneity structure such that $\tau= h_{0}: E^{k} \rightarrow M$ is the
(real) graded bundle of degree $k$ associated with $h|_{\mathbb{R}}$.
Denote by $\varepsilon_{2j}:=e^{2\pi\sqrt{-1}/(2j)}$ the
$2j$th-order primitive root of 1, and by
$J_{2j}:=h(\varepsilon_{2j},\cdot):E^{k}\rightarrow E^{k}$ the action
of $\varepsilon_{2j}$ on $E^{k}$. We say that the complex homogeneity
structure $h$ is \emph{nice} if $J_{2j}$ acts as minus identity on the
core bundle $\hat{E^{j}}$ for every $j=1, 2, \ldots, k$.

Nice\ complex homogeneity structures form a \emph{full subcategory} of
the category of complex homogeneity structures.
\end{definition}

\begin{example}\label{ex:nice_hgm_str}
It is easy to see, using local coordinates, that if $\tau:E\rightarrow
M$ is a complex graded bundle, and $h^{E}:\mathbb{C}\times
E\rightarrow E$ the related complex homogeneity structure, then $h^{E}$
is nice.
\end{example}

It turns out that the converse is also true, that is, for nice\ complex
homogeneity structures we can prove an analog of \reftext{Theorem~\ref{thm:equiv_holom}}:

\begin{theorem}\label{thm:equiv_complex}
The categories of (connected) complex graded bundles and
(connected) nice complex homogeneity structures are equivalent. At the level
of objects this equivalence is provided by the following two constructions
\begin{itemize}
\item With every complex graded bundle $\tau:E\rightarrow M$ one can
associate a nice\ complex homogeneity structure $(E,h^{E})$, where
$h^{E}$ is the action by homotheties of $E$.
\item Given a nice\ complex homogeneity structure $(M,h)$, the map
$h_{0}:M\rightarrow M_{0}:=h_{0}(M)$ provides $M$ with a canonical
structure of a complex graded bundle such that $h$ is the related action by homotheties.
\end{itemize}
At the level of morphisms: every morphism of complex graded bundles is
a morphism of the related nice\ complex homogeneity structures and,
conversely, every morphism of nice\ complex homogeneity structures
respects the canonical complex graded bundle structures.
\end{theorem}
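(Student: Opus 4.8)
The plan is to mimic the strategy used for Theorem~\ref{thm:equiv_holom}, but with the extra bookkeeping forced by the fact that the action $h$ is only smooth, not holomorphic. The key is that the "nice" hypothesis is exactly what is needed to promote the real graded bundle structure (coming from $h|_{\mathbb{R}}$ via Theorem~\ref{thm:eqiv_real}) to a \emph{complex} graded bundle structure compatible with the full $(\mathbb{C},\cdot)$-action.

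\medskip

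\textbf{Objects.} Let $(M,h)$ be a connected nice complex homogeneity structure. By Theorem~\ref{thm:eqiv_real} applied to $h^{\mathbb{R}}:=h|_{\mathbb{R}\times M}$, the map $h_{0}:M\rightarrow M_{0}:=h_{0}(M)$ is a real graded bundle, say of degree $k$, with an associated tower $E^{k}\to E^{k-1}\to\cdots\to M_{0}$ as in \eqref{eqn:tower_grd_spaces} and core bundles $\widehat{E^{j}}$. As noted in Remark~\ref{rem:induced_action_on_tower}, for each $\xi\in\mathbb{C}$ the map $h_{\xi}$ is a real graded bundle automorphism (it commutes with $h_{t}$, $t\in\mathbb{R}$), so $h$ descends to a $(\mathbb{C},\cdot)$-action on every bundle in the tower and on every core $\widehat{E^{j}}$. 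The plan is to argue \emph{by induction on $k$}. For $k=1$ the core is $E^{1}$ itself, a real vector bundle; the nice condition says $J_{2}=h_{-1}$ acts as $-\mathrm{id}$, and one checks that $\mathcal{J}:=h_{\sqrt{-1}}$ is then a fiberwise $\mathbb{R}$-linear endomorphism of $E^{1}$ with $\mathcal{J}^{2}=h_{-1}=-\mathrm{id}$, i.e.\ a complex vector bundle structure; smoothness of $h$ in $\xi$ gives the remaining homothety identities $h_{\xi}=\operatorname{Re}\xi\cdot\mathrm{id}+\operatorname{Im}\xi\cdot\mathcal{J}$. For the inductive step, the core $\widehat{E^{k}}$ carries a weight-$k$ homothety action of $(\mathbb{R},\cdot)$; restricting $h$ to $\widehat{E^{k}}$ and using that $J_{2k}$ acts as $-\mathrm{id}$ there, one builds (as in the rank-$(k)$ case, but with $\xi\mapsto\xi^{k}$) a complex vector bundle structure on $\widehat{E^{k}}$ together with complex-weight-$k$ fiber coordinates. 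The lower truncation $E^{k-1}$ is a nice complex homogeneity structure of degree $k-1$, so by induction it is a complex graded bundle; it remains to splice the two. Concretely, choose local graded coordinates adapted to the tower so that $\widehat{E^{k}}$-coordinates $z^{\alpha}_{k}$ are complex of weight $k$; the remaining real weight-$k$ coordinates on $E^{k}$ then assemble, using the $\mathbb{C}$-action, into complex weight-$k$ coordinates, and Lemma~\ref{lem:hol_hmg_fun} (applied fiberwise to the real homogeneous functions that are components of $h$) forces the transition maps to be $\mathbb{C}$-linear combinations of weight-matching monomials --- i.e.\ complex graded isomorphisms. Thus $\tau=h_{0}:M\to M_{0}$ is a complex graded bundle with $h$ its action by homotheties. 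The reverse construction (complex graded bundle $\leadsto$ nice complex homogeneity structure via homotheties) is Example~\ref{ex:nice_hgm_str}, and the two are mutually inverse by construction.

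\medskip

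\textbf{Morphisms.} Let $\tau_{j}:E_{j}\to M_{j}$ ($j=1,2$) be complex graded bundles with homothety actions $h^{j}$, and let $\Phi:E_{1}\to E_{2}$ be smooth with $\Phi\circ h^{1}_{\xi}=h^{2}_{\xi}\circ\Phi$ for all $\xi\in\mathbb{C}$. Restricting to $\xi\in\mathbb{R}$ and invoking Theorem~\ref{thm:eqiv_real}, $\Phi$ is already a real graded bundle morphism; in particular it covers a map $M_{1}\to M_{2}$ and, for graded fiber coordinates $\underline{z}^{\alpha}_{w}$ on $(E_{2})_{\Phi(p)}$, the pullback $\Phi^{*}\underline{z}^{\alpha}_{w}$ is a real homogeneous function of weight $w$. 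Intertwining with the full $\mathbb{C}$-action upgrades this to $\mathbb{C}$-homogeneity of weight $w$, so by Lemma~\ref{lem:hol_hmg_fun} it is a $\mathbb{C}$-polynomial in the $z^{\alpha}_{w}$, homogeneous in weight; hence $\Phi$ is fiberwise a complex graded space morphism, i.e.\ a morphism of complex graded bundles. The converse inclusion of morphisms is immediate since a complex graded bundle morphism intertwines the homotheties by definition. This gives a full, faithful, essentially surjective functor, establishing the equivalence.

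\medskip

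\textbf{Main obstacle.} The delicate point is the inductive splicing: producing, in a neighborhood of each point of $M_{0}$, a \emph{single} system of complex graded coordinates on $E^{k}$ that simultaneously restricts to the chosen complex coordinates on the core $\widehat{E^{k}}$, projects to the inductively obtained complex coordinates on $E^{k-1}$, and is genuinely $\mathbb{C}$-linearly acted upon by $h$. This requires knowing that the "extra" real weight-$k$ coordinates (those not coming from the core) can be paired up by $\mathcal{J}=h_{\sqrt{-1}}$ --- which is where the nice condition at \emph{every} level $j\le k$, not just $j=k$, is used --- and that smoothness of $\xi\mapsto h_{\xi}$ forces the $\mathbb{C}$-action on each weight-$j$ piece to be through $\xi\mapsto\xi^{j}$ rather than some other smooth character of $(\mathbb{C},\cdot)$ (the pathologies in the two Examples preceding Definition~\ref{def:nice_hgm_str} show what goes wrong without niceness). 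Once the coordinates are in place, Lemma~\ref{lem:hol_hmg_fun} does the rest almost automatically.
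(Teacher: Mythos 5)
Your overall strategy coincides with the paper's: restrict to $h|_{\mathbb{R}}$ and invoke Theorem~\ref{thm:eqiv_real}, induct on the degree $k$, treat the core $\widehat{E^{k}}$ as a complex vector space by decomposing the induced $S^{1}$-representation and using smoothness of $\xi\mapsto h_{\xi}$ at $\xi=0$ to force the character $\xi\mapsto\xi^{k}$, and handle morphisms via Lemma~\ref{lem:hol_hmg_fun}. The morphism half of your argument is complete and matches the paper's.

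However, the step you flag as the ``main obstacle'' is a genuine gap, and it is exactly the step the paper's Lemma~\ref{lem:action_Ek} is designed to close. Your phrase ``the remaining real weight-$k$ coordinates on $E^{k}$ then assemble, using the $\mathbb{C}$-action, into complex weight-$k$ coordinates'' does not work as stated: off the core, $h_{\sqrt{-1}}$ does not act linearly on the real weight-$k$ coordinates (a real homogeneous function of weight $k$ may involve products of lower-weight coordinates), so there is no direct ``pairing up by $\mathcal{J}$.'' What is actually needed is to extend each complex graded coordinate $\widehat{z}^{\mu}$ on the core to a globally defined function $z^{\mu}_{k}:W^{k}\to\mathbb{C}$ satisfying $z^{\mu}_{k}(h_{\xi}(v))=\xi^{k}z^{\mu}_{k}(v)$ for \emph{all} $\xi\in\mathbb{C}$. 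The paper does this by first extending the real and imaginary parts of $\widehat{z}^{\mu}$ arbitrarily to $\mathbb{R}$-homogeneous weight-$k$ functions $\tilde{z}^{\mu}$ (possible since restriction to the core of real weight-$k$ homogeneous functions realizes every linear function on $\widehat{W}^{k}$), and then averaging over the unit circle,
\begin{equation*}
z^{\mu}_{k}(v):=\frac{1}{2\pi}\int_{|\xi|=1}\xi^{-k}\,\tilde{z}^{\mu}(h_{\xi}(v))\,\mathrm{d}\lambda(\xi),
\end{equation*}
which produces an $S^{1}$-equivariant, $\mathbb{R}$-homogeneous (hence fully $\mathbb{C}$-homogeneous, since $S^{1}$ and $\mathbb{R}$ generate $\mathbb{C}$ as a monoid) function that still restricts to $\widehat{z}^{\mu}$ on the core. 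One then checks that the resulting system $(z^{\alpha}_{w},z^{\mu}_{k})$ is a global complex coordinate system because its weight-$j$ members restrict to diffeomorphisms $\widehat{W}^{j}\xrightarrow{\simeq}\mathbb{C}^{d_{j}}$ for each $j$. Without an argument of this kind (or an equivalent one), the inductive step — and hence the object-level equivalence — is not established.
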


Again in the proof we shall need a few technical results. First observe
that $J_{2} = h_{-1}$ acts as minus identity on every vector bundle, so
for $k=1$
the condition in \reftext{Definition~\ref{def:nice_hgm_str}} is trivially
satisfied (i.e., a~degree-one complex homogeneity structure is always
nice). Thus every $(\mathbb{C},\cdot)$-action whose restriction to
$(\mathbb{R},\cdot)$ is of degree one should be a complex bundle.

\begin{lemma}\label{lem:C_action_on_vect_space}
Let $h: \mathbb{C}\times W\rightarrow W$
be a smooth action of the monoid $(\mathbb{C}, \cdot)$ on a real
vector space $W$, such that
$h(t, v) = t\, v$ for every $t\in\mathbb{R}$ and $v\in W$.

Then $h$ induces a complex structure on $W$ by the formula
%
\begin{equation}
h(a+b\sqrt{-1}, v) = a\, v + b\,h(\sqrt{-1}, v)
\end{equation}
for every $a, b\in\mathbb{R}$.
\end{lemma}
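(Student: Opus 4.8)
The plan is to show that the formula $h(a+b\sqrt{-1},v)=a\,v+b\,h(\sqrt{-1},v)$ turns $W$ into a complex vector space with scalar multiplication compatible with the given $(\mathbb C,\cdot)$-action; the only genuinely algebraic input is that $h$ is multiplicative, and the only analytic input is that $h$ is smooth (which we need for the $\mathbb R$-bilinearity in $v$). Write $J:=h(\sqrt{-1},\cdot):W\rightarrow W$. First I would record that for each fixed $\xi\in\mathbb C$ the map $h_\xi:=h(\xi,\cdot)$ is smooth and commutes with the homotheties $h_t=t\cdot\mathrm{id}$, $t\in\mathbb R$, i.e. $h_\xi(t v)=t\,h_\xi(v)$; differentiating at $t=0$ (or invoking the degree-one case of Theorem~\ref{thm:eqiv_real}, which says a homogeneity structure morphism of degree $1$ is a linear map) shows each $h_\xi$ is $\mathbb R$-linear on $W$. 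In particular $J\in\mathrm{GL}(W)$ is $\mathbb R$-linear.

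Next I would use multiplicativity of the monoid action. From $(\sqrt{-1})\cdot(\sqrt{-1})=-1$ in $(\mathbb C,\cdot)$ we get $h_{\sqrt{-1}}\circ h_{\sqrt{-1}}=h_{-1}$, and since $-1\in\mathbb R$ acts by $v\mapsto -v$, this gives $J^2=-\mathrm{id}_W$; thus $J$ is a genuine complex structure on the real vector space $W$. It remains to check that the proposed scalar multiplication $\mathbb C\times W\rightarrow W$, $(a+b\sqrt{-1},v)\mapsto a v+b\,Jv$, satisfies the vector-space axioms and coincides with $h$. Additivity in $v$ and the identity axiom $(1,v)\mapsto v$ are immediate from $\mathbb R$-linearity of $\mathrm{id}$ and $J$; $\mathbb R$-homogeneity $(\lambda\in\mathbb R)$ is clear; and the associativity $(\xi\eta)\cdot v=\xi\cdot(\eta\cdot v)$ for $\xi,\eta\in\mathbb C$ reduces, by $\mathbb R$-bilinearity of both sides in $(\xi,\eta)$, to the single identity $(\sqrt{-1}\cdot\sqrt{-1})\cdot v=\sqrt{-1}\cdot(\sqrt{-1}\cdot v)$, i.e. $-v=J^2 v$, which we have already established.

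Finally I would verify that this complex-linear structure agrees with the original action, i.e. $h(a+b\sqrt{-1},v)=a v+b\,Jv$ for all $a,b\in\mathbb R$. Here I would fix $v$ and consider the smooth map $\mathbb C\ni\xi\mapsto h(\xi,v)\in W$; by multiplicativity and the fact that every $\xi=a+b\sqrt{-1}$ can be written so that $h_\xi$ is built from $h_t$ ($t\in\mathbb R$) and $J$ once we know $h_\xi$ is $\mathbb R$-linear in $v$ — more precisely, using $h_{a+b\sqrt{-1}}=h_a\circ(\mathrm{id}+\text{something})$ is awkward, so instead I would argue directly: the set $S=\{\xi\in\mathbb C: h(\xi,v)=a v+bJv\text{ where }\xi=a+b\sqrt{-1},\ \forall v\}$ contains $\mathbb R$ and $\sqrt{-1}$, and because $h$ is smooth and the candidate formula is real-analytic (indeed real-linear) in $(a,b)$ while both sides are determined by their restriction to the real axis of the complex line $\{t\sqrt{-1}:t\in\mathbb R\}$ together with multiplicativity... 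The cleanest route, which I would actually take, is: for fixed $v$ the map $\xi\mapsto h_\xi(v)$ and the map $\xi\mapsto \mathrm{Re}(\xi)v+\mathrm{Im}(\xi)Jv$ are both $\mathbb R$-linear in $\xi\in\mathbb C\cong\mathbb R^2$ (the first because $h_{\xi+\xi'}=h_\xi+h_{\xi'}$?—this is \emph{not} automatic, monoid addition is not in play), so I must instead verify equality on the additive \emph{basis} $\{1,\sqrt{-1}\}$ of $\mathbb C$ and extend by the already-proven vector-space axioms, noting that $h$ restricted to scalars $t\cdot 1$ and $t\cdot\sqrt{-1}$ is fixed and a general $\xi$ acts as $h_\xi = h_{\mathrm{Re}\xi}+h_{\mathrm{Im}\xi}\circ J$ \emph{after} we know the structure is complex-linear — i.e. this last equality is precisely the content of the vector-space axioms applied to $\xi=\mathrm{Re}\,\xi+ \mathrm{Im}\,\xi\cdot\sqrt{-1}$, which hold by construction. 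I expect the main obstacle to be exactly this last point: disentangling what is \emph{assumed} (multiplicativity, smoothness, $\mathbb R$-action $=$ ordinary scaling) from what must be \emph{proved} (additivity of $\xi\mapsto h_\xi(v)$ in $\xi$), and realizing that the additivity is forced not by the monoid structure but by smoothness via $\mathbb R$-linearity of each $h_\xi$ in $v$ combined with $J^2=-\mathrm{id}$; once that is clearly stated the rest is a routine check.
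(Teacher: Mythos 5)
Your first two steps are fine and match the paper: each $h_\xi$ is $\mathbb{R}$-linear because it commutes with the real homotheties, and $J^2=-\operatorname{id}_W$ follows from multiplicativity, so $J$ is a complex structure and $\xi\cdot v:=\Re\xi\,v+\Im\xi\,Jv$ makes $W$ a complex vector space. The genuine gap is the last step, which is in fact the entire content of the lemma: you never prove that $h(\xi,v)=\xi\cdot v$ for non-real $\xi\notin\mathbb{R}\cup\sqrt{-1}\,\mathbb{R}$. You correctly notice that additivity of $\xi\mapsto h_\xi(v)$ in $\xi$ is not supplied by the monoid structure, but you then assert it is ``forced by smoothness via $\mathbb{R}$-linearity of each $h_\xi$ combined with $J^2=-\operatorname{id}$,'' and that the identity $h_\xi=h_{\Re\xi}+h_{\Im\xi}\circ J$ ``holds by construction'' as a vector-space axiom. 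That is circular: the axioms hold for the \emph{newly defined} multiplication $\xi\cdot v$, not for $h$, and agreement of the two maps on the additive basis $\{1,\sqrt{-1}\}$ of $\mathbb{C}$ says nothing about agreement elsewhere, since $h$ is only multiplicative, not additive, in $\xi$.

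That this cannot be repaired by soft arguments is shown by $W=\mathbb{R}^2\cong\mathbb{C}$ with $h(\xi,v)=\xi^{3}\bar{\xi}^{-2}v$ for $\xi\neq 0$ and $h(0,v)=0$: this is multiplicative, restricts to $h(t,v)=tv$ on $\mathbb{R}$, every $h_\xi$ is $\mathbb{C}$-linear (hence $\mathbb{R}$-linear), and $h_{\sqrt{-1}}=J$ satisfies $J^2=-\operatorname{id}$, yet $h(\xi,v)\neq\xi\cdot v$. The only hypothesis it violates is smoothness at $\xi=0$, which your argument never invokes. The paper closes exactly this gap: it restricts $h$ to $S^1\times W$, decomposes $W$ into isotypic components where $h(t\theta,v)=t\theta^{k}v$ with $k$ odd, rewrites this as $h(\xi,v)=\xi^{k}\bar{\xi}^{-k+1}v$, and uses non-differentiability of $\xi\mapsto\xi^{k}\bar{\xi}^{-k+1}$ at $\xi=0$ for $k\neq1$ to force $k=1$. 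Some argument of this kind --- using smoothness of $h$ in the $\mathbb{C}$ variable at $0$ --- is indispensable and is missing from your proposal.
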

\begin{proof}
Denote $J:=h(\sqrt{-1}, \cdot):W\rightarrow W$. We have $J\circ J =
h_{\sqrt{-1}}\circ h_{\sqrt{-1}} = h_{-1}=-\operatorname{id}_{W}$.
Moreover, $J$~is $\mathbb{R}$-linear, since $J$ commutes with the
homotheties $h_{t}: W\rightarrow W$ for any $t\in\mathbb{R}$ (see
Theorem~2.4 \cite{JG_MR_higher_vec_bndls_and_multi_gr_sym_mnflds}).
Therefore, $J$ defines a complex structure on $W$ and the formula
\[
\xi\, v : =\Re\xi\, v + \Im\xi\,J(v),
\]
where $\xi\in\mathbb{C}$ and $v\in W$, allows us to consider $W$ as
a complex vector space.
Note that $\xi\, v = h(\xi, v)$ for $\xi\in\mathbb{R}$. To prove
that this equality holds for any $\xi\in\mathbb{C}$ we shall study the restriction
\[
h_{|S^{1}\times W}: S^{1}\times W\rightarrow W
\]
which is a group action of the unit circle on a complex vector space
$W$. Indeed, for any $\xi\in\mathbb{C}$, the action
$h(\xi, \cdot): W\rightarrow W$ is a $\mathbb{C}$-linear map, since
it commutes with the complex structure $J$ and the endomorphisms $h(t,
\cdot)$ for $t\in\mathbb{R}$. It follows from the general theory
that $W$ splits into sub-representations $W= \bigoplus_{j=1}^{n}
W_{j}$ such that for any $|\theta|=1$ and any $v\in W_{j}$
\[
h(\theta, v) =\theta^{k_{j}} \, v
\]
where $k_{1}, \ldots, k_{n}$ are some integers. The restriction of $h$
to each summand $W_{j}$ defines an action of the monoid $(\mathbb
{C},\cdot)$ hence, without loss of generality, we may assume that
$W=W_{1}$ and that
%
\begin{equation}\label{eqn:h_theta_k}
h(t\,\theta, v) = t\,\theta^{k}\, v
\end{equation}
for every $\theta\in S^{1}$, $t\in\mathbb{R}$ and $v\in W$. Note
that $k$ should be an odd integer as $J^{2}=-\operatorname{id}_{W}$.
Equivalently, taking $\xi=t\, \theta$, we can denote $h(\xi, v) =
\xi^{k}\bar{\xi}^{-k+1}\, v$ for every $\xi\in\mathbb{C}\setminus
\{0\}$. However, the function $\xi\mapsto\xi^{k}\bar{\xi}^{-k+1}$,
$0\mapsto0$, is not differentiable at $\xi=0$ unless $k=1$.
Therefore, $h(\xi, v)=\xi\, v$, as was claimed.
\end{proof}

Now we shall show that an analogous result holds for nice\ homogeneity
structures of arbitrary degree.

\begin{lemma}\label{lem:action_Ek}
Let $h:\mathbb{C}\times M\rightarrow M$ be a nice\ complex homogeneity
structure such that the restriction $h|_{\mathbb{R}}$ makes $M$ a
(real) graded space of degree $k$. Then $M$ is a complex graded space
of degree $k$ with homotheties given by $h$.
\end{lemma}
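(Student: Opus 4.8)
The plan is to argue directly on the graded algebra of homogeneous functions, which avoids induction on the degree (though the argument is also a natural upgrade of the proof of Lemma~\ref{lem:C_action_on_vect_space}). Write $A=\bigoplus_{w\ge 0}A_w$ for the algebra of $h|_{\mathbb{R}}$-homogeneous functions on $M$, where $A_w$ is the space of real-valued homogeneous functions of weight $w$; by Lemma~\ref{lem:hol_hmg_fun_R} each $A_w$ is the finite-dimensional space of weight-$w$ polynomials in any system of real graded coordinates, so $A$ is the polynomial algebra on such a system. Since $(\mathbb{C},\cdot)$ is abelian, every $h_\xi$ commutes with $h|_{\mathbb{R}}$; hence $f\mapsto h_\xi^{*}f$ is a linear (pullback) map preserving each $A_w$, and for $\xi\neq 0$ it is an algebra automorphism. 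Therefore $h^{*}$ descends to a smooth, linear action of $(\mathbb{C},\cdot)$ on the finite-dimensional quotient $V_w:=A_w/(A_{+}\!\cdot\! A_{+})_w$ of ``new'' weight-$w$ functions (here $(A_+\!\cdot\!A_+)_w$ is spanned by products of homogeneous functions of lower positive weights), with $h_t^{*}=t^{w}\,\mathrm{id}_{V_w}$ for $t\in\mathbb{R}$. Restricting weight-$w$ coordinates to the core identifies $V_w$, $(\mathbb{C},\cdot)$-equivariantly, with the dual $(\widehat{E^{w}})^{*}$ of the core, the $(\mathbb{C},\cdot)$-action on $\widehat{E^{w}}$ being the one of Remark~\ref{rem:induced_action_on_tower} (decomposable weight-$w$ functions vanish on the core, and $h_\xi$ preserves the core).

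Next I would analyse this linear action exactly as in the proof of Lemma~\ref{lem:C_action_on_vect_space}. Restricting to $S^{1}\subset\mathbb{C}$ gives a linear circle action, so $V_w\otimes\mathbb{C}=\bigoplus_{m}(V_w\otimes\mathbb{C})_m$, where $h_\theta^{*}$ acts by $\theta^{m}$ on the summand $(V_w\otimes\mathbb{C})_m$; writing $\xi=t\theta$ with $t=|\xi|$ one finds $h_\xi^{*}=\xi^{(w+m)/2}\bar\xi^{(w-m)/2}$ on that summand, and smoothness of $h$ at $\xi=0$ (the function $\xi\mapsto|\xi|^{w-m}\xi^{m}$ must be smooth at $0$, as in the cited lemma) forces $|m|\le w$ and $m\equiv w$ modulo $2$. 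Now I bring in niceness: by Definition~\ref{def:nice_hgm_str}, $J_{2w}=h(\varepsilon_{2w},\cdot)$ acts as $-\mathrm{id}$ on $\widehat{E^{w}}$, hence on $V_w\otimes\mathbb{C}$; since on the $m$-summand it acts by $\varepsilon_{2w}^{(w+m)/2}\bar\varepsilon_{2w}^{(w-m)/2}=e^{\sqrt{-1}\pi m/w}$, the only summands that survive are $m=w$ and $m=-w$, i.e. $h_\xi^{*}$ acts by $\xi^{w}$ on $(V_w\otimes\mathbb{C})_{w}$ and by $\bar\xi^{w}$ on $(V_w\otimes\mathbb{C})_{-w}$. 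As complex conjugation interchanges these two summands, $d_w:=\dim_{\mathbb{R}}V_w$ is even; set $n_w:=d_w/2$.

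I would then assemble complex graded coordinates. For each $w$ with $d_w>0$ choose $u^{1}_{w},\dots,u^{n_w}_{w}\in A_w\otimes\mathbb{C}$ whose classes form a $\mathbb{C}$-basis of $(V_w\otimes\mathbb{C})_{w}$ --- possible because $A_w\otimes\mathbb{C}\twoheadrightarrow V_w\otimes\mathbb{C}$ is $(\mathbb{C},\cdot)$-equivariant, hence surjective on each isotypic component --- so that $h_\xi^{*}u^{i}_{w}=\xi^{w}u^{i}_{w}$. The classes of $u^{i}_{w}$ and of $\overline{u^{i}_{w}}$ together span $V_w\otimes\mathbb{C}$, so the classes of $\Re u^{i}_{w}$ and $\Im u^{i}_{w}$ form an $\mathbb{R}$-basis of $V_w$; as this holds for every $w$, the collection $\{\Re u^{i}_{w},\Im u^{i}_{w}\}$ is a (real) graded coordinate system on $M$ (elements of $A$ whose leading terms give a basis of every $V_w$ generate $A$ polynomially). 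Hence $\mathbf{u}=(u^{i}_{w})\colon M\to\mathbb{C}^{\mathbf{d}}$, with $\mathbf{d}=(n_1,\dots,n_k)$, is a diffeomorphism; I use it to transport the standard complex structure of $\mathbb{C}^{\mathbf{d}}$ to $M$, so that $(u^{i}_{w})$ becomes a global system of complex coordinates with weight $w$ attached to $u^{i}_{w}$, and by construction $h(\xi,(u^{i}_{w}))=(\xi^{w}u^{i}_{w})$, i.e. $h$ is the action by homotheties. Finally, any two such systems $(u^{i}_{w})$, $(u'^{\,i}_{w})$ are related by $u'^{\,i}_{w}=F^{i}_{w}(\mathbf{u})$ with $F^{i}_{w}$ smooth and complex homogeneous of weight $w$ on the complex graded space $(\mathbb{C}^{\mathbf{d}},(u^{j}_{v}))$, so by Lemma~\ref{lem:hol_hmg_fun} each $F^{i}_{w}$ is a weight-$w$ homogeneous polynomial, a transformation of the form \eqref{eqn:graded_transf}; in particular the complex structure and the equivalence class of graded coordinates are independent of all choices, and $M$ is a complex graded space of degree $k$ (note $n_k=d_k/2\neq 0$) whose homotheties are $h$.

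I expect the representation-theoretic step to be the main obstacle: showing that niceness is precisely what rules out the intermediate $S^{1}$-weights $|m|<w$ on each space $V_w\cong(\widehat{E^{w}})^{*}$ of new weight-$w$ functions, hence the conjugate-linear contamination that would otherwise obstruct a holomorphic model (the examples preceding Definition~\ref{def:nice_hgm_str} show exactly what goes wrong when niceness fails, e.g. a piece with $h_\xi^{*}=\xi^{2}\bar\xi$). The remaining points --- that the extracted $\mathbb{C}$-homogeneous functions genuinely form a global graded coordinate system, and that transition maps between two such systems are automatically polynomial --- are then routine consequences of Lemmas~\ref{lem:hol_hmg_fun_R} and \ref{lem:hol_hmg_fun}.
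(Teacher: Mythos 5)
Your proof is correct, and it takes a genuinely different route from the paper's. The paper proceeds by induction on the degree $k$: it equips $\mathrm{W}^{k-1}$ with a complex graded structure by the inductive hypothesis, runs the $S^1$-representation argument only on the top core $\widehat{\mathrm{W}}^k$ (where niceness forces $h(\xi,v)=\xi^k\ast v$), and then extends core coordinates to complex homogeneous functions on all of $\mathrm{W}^k$ by the explicit averaging integral \eqref{eqn:extensions_of_z_mu}. You instead work once and for all with the graded algebra $A=\bigoplus_w A_w$ of homogeneous functions and its spaces of indecomposables $V_w=A_w/(A_+A_+)_w\cong(\widehat{E^w})^*$, carry out the $S^1$-isotypic analysis on each $V_w$ in parallel (smoothness at $\xi=0$ gives $|m|\le w$, $m\equiv w\bmod 2$; niceness then kills everything except $m=\pm w$), and replace the averaging step by semisimplicity of finite-dimensional $S^1$-representations, lifting eigenvectors through the equivariant surjection $A_w\otimes\mathbb{C}\twoheadrightarrow V_w\otimes\mathbb{C}$ -- which is of course the same averaging in disguise, but packaged so that no induction is needed. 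What your version buys is a more transparent account of what niceness does (it excludes precisely the intermediate circle weights $|m|<w$ on the new level-$w$ coordinates, i.e.\ the antiholomorphic contamination), at the cost of leaning on two facts you state but do not prove in detail: that $h_\xi^*$ acts on $V_w\otimes\mathbb{C}$ by the scalar $|\xi|^{w-m}\xi^m$ on each isotypic piece and that this is smooth at $0$ only for $(w\pm m)/2\in\mathbb{Z}_{\ge 0}$ (the same argument the paper uses in \reftext{Lemma~\ref{lem:C_action_on_vect_space}}), and that a family of homogeneous functions whose classes give a basis of every $V_w$ is automatically a global graded coordinate system (a triangular-with-invertible-diagonal change of coordinates; the paper uses the equivalent criterion of restricting to the cores). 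Both are routine and at the paper's own level of rigor, so I see no gap.
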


\begin{proof}
Denote by $\mathrm{W}^{k}$ the (real) graded space structure on $M$.
By $\mathrm{W}^{j}$ with $j\leq k$ denote the lower levels of the
tower \reftext{\eqref{eqn:tower_grd_spaces}} associated with $\mathrm{W}^{k}$.
We shall proceed by induction on $k$. Case $k=1$ follows immediately
from \reftext{Lemma~\ref{lem:C_action_on_vect_space}}.

Let now $k$ be arbitrary. The basic idea of the proof is to define a
complex graded space structure of degree $k-1$ on $\mathrm{W}^{k-1}$
using the inductive assumption and to construct a complex graded space
structure of rank $(0,\ldots,0,\dim_{\mathbb{C}}\widehat{\mathrm
{W}}^{k})$ on the core $\widehat{\mathrm{W}}^{k}$. Then using both
structures we build complex graded coordinates on $\mathrm{W}^{k}$.

Recall (see \reftext{Remark~\ref{rem:induced_action_on_tower}}) that, for any $\xi
\in\mathbb{C}$, the map $h_{\xi}: \mathrm{W}^{k}\rightarrow\mathrm
{W}^{k}$ is a (real) graded space morphism, and that $h$ induces an action of
the monoid $(\mathbb{C}, \cdot)$ on each graded space $\mathrm
{W}^{j}$ in the tower \reftext{\eqref{eqn:tower_grd_spaces}}. Note that the
induced action on $\mathrm{W}^{k-1}$ satisfies all assumptions of our
lemma hence, by the inductive assumption, we may consider $\mathrm
{W}^{k-1}$ as a complex graded space.

Denote by $(z^{\alpha}_{w})$ complex graded coordinates on $\mathrm
{W}^{k-1}$ and pullback them to $\mathrm{W}^{k}$ by means of the
projection $\tau^{k}_{k-1}: \mathrm{W}^{k}\rightarrow\mathrm
{W}^{k-1}$. Denote the resulting functions again with the same symbols.
This should not lead to any confusion since the pullbacked function
$z^{\alpha}_{w}: \mathrm{W}^{k}\rightarrow\mathbb{C}$ is still of
weight $w$, i.e.
%
\begin{equation}\label{eqn:theta_on_Ek-1}
h_{\xi}^{\ast}(z^{\alpha}_{w}) = \xi^{w}\, z^{\alpha}_{w}.
\end{equation}

On the other hand, as any morphism of graded spaces, $h_{\xi}$ can be
restricted to the core $\widehat{\mathrm{W}}^{k}$.
This defines an action of $(\mathbb{C}, \cdot)$ on $\widehat{\mathrm
{W}}^{k}$,
\[
\widehat{h}:= h_{|\mathbb{C}\times\widehat{\mathrm{W}}^{k}}:
\mathbb{C}\times\widehat{\mathrm{W}}^{k}\rightarrow\widehat
{\mathrm{W}}^{k}.
\]
Recall that $\widehat{\mathrm{W}}^{k}$ is a real vector space with
homotheties defined by
%
\begin{equation}\label{eqn:homotheties_in_core_Ek}
(t, v)\mapsto t\ast v:= h|_{\mathbb{R}}(\sqrt[k]{t}, v)\ ,
\end{equation}
for every $t\geq0$ and every $v\in\widehat{\mathrm{W}}^{k}$.
In a (real) graded coordinate system $(\Re z^{\alpha}_{w}, \Im
z^{\alpha}_{w}, p^{\mu})$ on $\mathrm{W}^{k}$, where $p^{\mu}:
\mathrm{W}^{k}\rightarrow\mathbb{R}$ are arbitrary coordinates of
weight $k$, the map \reftext{\eqref{eqn:homotheties_in_core_Ek}} reads $(t,
(\widehat{p}^{\mu}))\mapsto(t\,\widehat{p}^{\mu})$, where
$\widehat{p}^{\mu}=p^{\mu}|_{\widehat{\mathrm{W}}^{k}}$. Hence it
is a smooth map (with respect to the inherited submanifold structure)
which can be extended also to the negative values of $t$.

Let us denote by $\widehat{J}_{4k}$ the restriction of
$J_{4k}:=h(\varepsilon_{4k},\cdot)$ to $\widehat{\mathrm{W}}^{k}$.
By assumption ($h$ is nice), $-\operatorname{id}_{\widehat{\mathrm
{W}}^{k}} = \widehat{J}_{2k} = \widehat{J}_{4k}\circ\widehat
{J}_{4k}$. Therefore (cf. \reftext{Lemma~\ref{lem:C_action_on_vect_space}}),
$\widehat{J}_{4k}$ defines a complex structure on the real vector
space $\widehat{\mathrm{W}}^{k}$ by the formula
\[
(a+b\sqrt{-1})\ast v:= a\ast v + b\ast\widehat{J}_{4k}(v)\ ,
\]
for every $a,b\in\mathbb{R}$ and $v\in\widehat{\mathrm{W}}^{k}$.

Note that the homotheties $h_{\xi}$ commute also with $\widehat
{J}_{4k}$, therefore $h_{\xi}|_{\widehat{\mathrm{W}}^{k}}$ is a
$\mathbb{C}$-linear endomorphism of $\widehat{\mathrm{W}}^{k}$. By
restricting to $|\xi|=1$ we obtain a
representation of the unit circle group $S^{1}$ in $\operatorname
{GL}_{\mathbb{C}}(\widehat{\mathrm{W}}^{k})$. As in the proof of
\reftext{Lemma~\ref{lem:C_action_on_vect_space}}, without loss of generality we
may assume that there exists an integer $m$ such that
\[
\widehat{h}(\theta, v)= \theta^{m}\ast v
\]
for every $|\theta|=1$. Taking $\theta=\varepsilon_{2k}$ we see that
$\varepsilon_{2k}^{m}=-1$, hence $m\equiv k \mod2k$. It follows that
$\widehat{h}(t\, \theta, v) = t^{k}\,\theta^{m} \ast v$, for every
$t\in\mathbb{R}$ and $\theta\in S^{1}$.
However, the function $\xi:=t\,\theta\mapsto t^{k}\theta^{m} = \xi
^{k} \, (\xi/\bar{\xi})^{m-k}$ is smooth only if $m=k$, since $m-k$
is a multiplicity of $2k$. Therefore,
%
\begin{equation}\label{eqn:theta_on_core}
h(\xi, v) = \xi^{k}\ast v,
\end{equation}
for every $\xi\in\mathbb{C}$ and every $v\in\widehat{\mathrm
{W}}^{k}$. In other words, $\widehat{W}^{k}$ is a complex graded space
of rank $\mathbf{d}=(0,0,\ldots,0,\dim_{\mathbb{C}}\widehat
{\mathrm{W}}^{k})$.\smallskip

Let $(\widehat{z}^{\mu}: \widehat{\mathrm{W}}^{k}\rightarrow
\mathbb{C})$ be a system of complex graded coordinates on $\widehat
{\mathrm{W}}^{k}$. We shall show that it is possible to extend each
$\widehat{z}^{\mu}$ to a complex function $z^{\mu}_{k}: W^{k}
\rightarrow\mathbb{C}$ in such a way that
%
\begin{equation}\label{eqn:theta_on_Ek}
z^{\mu}_{k}(h(\xi, v)) = \xi^{k}\, z^{\mu}_{k}(v)
\end{equation}
hold for any $\xi\in\mathbb{C}$ and $v\in\mathrm{W}^{k}$, i.e.,
$z^{\mu}_{k}$ are complex homogeneous function of weight $k$. First see
that we can find extensions of $\widehat{z}^{\mu}$ which satisfy \reftext{\eqref{eqn:theta_on_Ek}} for $\xi\in\mathbb{R}$. Indeed, the
restriction to $\widehat{W}^{k}$ of a real homogeneous weight $k$
function $W^{k}\rightarrow\mathbb{R}$ can be an arbitrary linear
function on $\widehat{W}^{k}$. Thus we extend the real and imaginary
parts of $\widehat{z}^{\mu}$ separately and get $\mathbb
{R}$-homogeneous extensions, say $\tilde{z}^{\mu}: W^{k} \rightarrow
\mathbb{C}$. Now consider a function
%
\begin{equation}\label{eqn:extensions_of_z_mu}
z^{\mu}_{k}(v) := \frac{1}{2\pi} \int_{|\xi|=1} \xi^{-k} \tilde
{z}(h_{\xi}(v)) \mathrm{d}\lambda(\xi),
\end{equation}
where $v\in W^{k}$ and $\lambda$ is a homogeneous measure on the circle
$S^{1}=\{|\xi|=1\}$ with $\lambda(S^{1})=2\pi$.
Clearly, $z^{\mu}_{k}: W^{k} \rightarrow\mathbb{C}$ is smooth and
(as $h_{\xi}(h_{\theta}(v)) = h_{\xi\theta}(v)$ for any $\theta,\xi\in
\mathbb{C}$) we have
\[
z^{\mu}_{k}(h_{\theta}(v)) = \theta^{k} \frac{1}{2\pi} \int_{|\xi
|=1} \theta^{-k} \xi^{-k} \tilde{z}^{\mu}(h_{\xi\theta}(v))
\mathrm{d}\lambda(\xi) = \theta^{k} z^{\mu}_{k}(v),
\]
for any $|\theta|=1$. Since $\tilde{z}^{\mu}$ is $\mathbb
{R}$-homogeneous, the same is $z^{\mu}_{k}$,
hence $z^{\mu}_{k}$ is actually a complex homogeneous function, as
$S^{1}$ and $\mathbb{R}$ generate $\mathbb{C}$ as a monoid. Moreover,
for $v\in\widehat{W}^{k}$ equality $\tilde{z}^{\mu}(h_{\xi}(v)) = \xi
^{k} \tilde{z}^{\mu}(v)$ holds, hence $z^{\mu}_{k}|_{\widehat{W}^{k}} =
\tilde{z}^{\mu}|_{\widehat{W}^{k}}$, and so $z^{\mu}_{k}$ is indeed
an extension of $\hat{z}^{\mu}$.

Lastly, the system of homogeneous functions $(z^{\alpha}_{w}, z^{\mu
}_{k})$ defines a global diffeomorphism $W^{k}\xrightarrow{\simeq}
\mathbb{C}^{|\mathbf{d}|}$ where $\mathbf{d}=(d_{1}, \ldots,
d_{k})$, $d_{j}=\operatorname{dim}_{\mathbb{C}} \widehat{W}^{j}$.
Indeed, it is enough to point that for any $1\leq j\leq k$, the
restrictions of some of these functions (those of weight $j$) to the
core $\widehat{W}^{j}$ define a diffeomorphism $\widehat
{W}^{j}\xrightarrow{\simeq} \mathbb{C}^{d_{j}}$.
\end{proof}

\reftext{Theorem~\ref{thm:equiv_complex}} is a simple consequence of the above result.

\begin{proof}[Proof of \reftext{Theorem~\ref{thm:equiv_complex}}]
We already observed (see Ex. \ref{ex:nice_hgm_str}) that a complex
graded bundle structure on $\tau:E\rightarrow M$ induces a nice\
complex homogeneity structure $(E,h^{E})$ by the associated action of the
homotheties of $E$.

The converse is also true. Indeed, let $h:\mathbb{C}\times
M\rightarrow M$ be a nice\ complex homogeneity structure. Clearly
$h|_{\mathbb{R}}$ is a (real) homogeneity structure on $M$, and thus,
by \reftext{Theorem~\ref{thm:eqiv_real}}, $h_{0}:M\rightarrow M_{0}:=h_{0}(M)$
is a (real) graded bundle of degree, say, $k$. Now on each fiber of
this bundle the action $h$ defines a nice\ homogeneity structure. By
applying \reftext{Lemma~\ref{lem:action_Ek}} we get a complex graded space
structure on each fiber of $h_{0}$. Thus $M$ is indeed a complex graded
bundle.

The equivalence at the level of morphisms is showed analogously to the
holomorphic case: it amounts to show that a smooth map between two
complex graded spaces which intertwines the homothety actions is a
complex graded space morphism. This is precisely the assertion of \reftext{Lemma~\ref{lem:hol_hmg_fun}}.
\end{proof}

\section{Actions of the monoid $\mathcal{G}_{2}$}
\label{sec:g2}
In this section we shall study smooth actions of the monoid $\mathcal
{G}_{2}$ on smooth manifolds.

\paragraph*{The left and right actions of the monoid $\mathcal{G}_{2}$}
Recall (see the end of Section~\ref{sec:perm}) that $\mathcal{G}_{2}$
was introduced as the space of 2nd-jets of punctured
maps $\phi:(\mathbb{R},0)\rightarrow(\mathbb{R},0)$ with the
multiplication induced by the composition. Under the identification of
$\mathcal{G}_{2}$ with $\mathbb{R}^{2}=\{(a,b)\ |\ a,b\in\mathbb
{R}\}$ it reads as (see \reftext{Remark~\ref{rem:g_k_k=1_2}}):
%
\begin{equation}
\label{eqn:G_2_multiplication}
(a,b)(A,B)=(aA,a B+bA^{2})\ .
\end{equation}
Since this multiplication is clearly non-commutative, unlike in the
case of real or complex numbers, we have to distinguish between left
and right actions of $\mathcal{G}_{2}$.

The crucial observation about $\mathcal{G}_{2}$ is that it contains
two submonoids:
\begin{itemize}
\item the multiplicative reals $(\mathbb{R}, \cdot)\simeq\{(a, 0):
a\in\mathbb{R}\}$, corresponding to the 2nd-jets of
punctured maps $\phi(t)=at$ for $a\in\mathbb{R}$,
\item and the additive group $(\mathbb{R}, +) \simeq\{(1, b): b\in
\mathbb{R}\}$.
\end{itemize}
Now to study right (or left) smooth actions of $\mathcal{G}_{2}$ on a
smooth manifold $M$ we use a technique similar to the one used to study
$(\mathbb{C},\cdot)$-actions in Section~\ref{sec:complex}. We begin
by considering the action of $(\mathbb{R},\cdot)\subset\mathcal
{G}_{2}$ which, by \reftext{Theorem~\ref{thm:eqiv_real}}, makes $M$ a (real)
graded bundle. Actually it will be more convenient to speak of the
related weight vector field (see \reftext{Definition~\ref{def:euler_vf}}) in
this case. On the other hand, the action of the additive reals
$(\mathbb{R},+)$ is a flow, i.e. it is encoded by a single (complete)
vector field on $M$. It is now crucial to understand the relation
(compatibility conditions) between these two structures. This can be
done by looking at the formula
%
\begin{equation}
\label{eqn:commutation_submonoids}
(a,0)(1,b/a)=(a,b)=(1,b/a^{2})(a,0)\ ,
\end{equation}
which allows to decompose every element of ${\mathcal{G}}^{\text
{inv}}_{2}=\mathcal{G}_{2} \setminus\{(0, b): b\neq0\}$, the group
of invertible elements of $\mathcal{G}_{2}$, as a product of the elements of the
submonoids $(\mathbb{R},\cdot)$ and $(\mathbb{R},+)$. Since equation \reftext{\eqref{eqn:commutation_submonoids}} describes the commutation of the
two submonoids, it helps to express the compatibility conditions of the
two related structures at the infinitesimal level as the following
result shows. Recall the notion of a homogeneous vector field -- cf.
\reftext{Definition~\ref{def:hgm_real}} and \reftext{Remark~\ref{rem:euler_weight}}.

\begin{lemma}\label{lem:G2_actions_infinitesimally}
Every smooth right (respectively, left) action $H: M \times\mathcal
{G}_{2} \rightarrow M$ (resp., $H: \mathcal{G}_{2}\times M \rightarrow
M$) on a smooth manifold $M$ provides $M$ with:
\begin{itemize}
\item a canonical graded bundle structure $\pi: M\rightarrow
M_{0}:=H_{(0,0)}(M)$ induced by the action of the submonoid $(\mathbb
{R},\cdot)\subset\mathcal{G}_{2}$,
\item and a complete vector field $X\in\mathfrak{X}(M)$ of weight $-1$
(resp., $Y\in\mathfrak{X}(M)$ of weight $+1$) with respect to the
above graded structure on $M$.
\end{itemize}
In other words, any $\mathcal{G}_{2}$-action provides $M$ with two
complete vector fields: the weight vector field $\Delta$ and another
vector field $X$ (respectively, $Y$), such that their Lie bracket satisfies
%
\begin{equation}
\label{eqn:commutator_X_Delta}
[\Delta,X]=-X \quad(\text{resp., $[\Delta, Y]=Y$})\ .
\end{equation}
\end{lemma}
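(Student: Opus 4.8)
The plan is to strip $\mathcal{G}_{2}$ down to its two natural submonoids and then read off how the induced structures interact from the commutation formula \eqref{eqn:commutation_submonoids}. So, given a smooth right action $H:M\times\mathcal{G}_{2}\rightarrow M$, I would first restrict $H$ to $(\mathbb{R},\cdot)\simeq\{(a,0)\}$: this is a smooth $(\mathbb{R},\cdot)$-action, so by \reftext{Theorem~\ref{thm:eqiv_real}} the map $\pi:=H_{(0,0)}:M\rightarrow M_{0}$ is a graded bundle, with weight vector field $\Delta$ and homotheties $h_{a}:=H(\cdot,(a,0))$. Next I would restrict $H$ to the subgroup $(\mathbb{R},+)\simeq\{(1,s)\}$ (it is a submonoid because $(1,s)(1,s')=(1,s+s')$): this is a smooth action of the \emph{group} $(\mathbb{R},+)$, hence a globally defined flow $\phi_{s}:=H(\cdot,(1,s))$, and I let $X$ be its infinitesimal generator, $X(p)=\frac{\mathrm{d}}{\mathrm{d}s}\big|_{s=0}H(p,(1,s))$. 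Completeness of $X$ is then automatic. At this point the only thing left to establish is the identity $[\Delta,X]=-X$, which by \reftext{Remark~\ref{rem:euler_weight}} is exactly the statement that $X$ is homogeneous of weight $-1$ for $\pi$.

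The key step uses \eqref{eqn:commutation_submonoids}. Writing $(a,b)=(a,0)(1,b/a)=(1,b/a^{2})(a,0)$ for $a\neq0$ and feeding both decompositions into the right-action axiom $p\cdot(gg')=(p\cdot g)\cdot g'$, I would obtain the equality of self-maps of $M$
\[
\phi_{b/a}\circ h_{a}=H(\cdot,(a,b))=h_{a}\circ\phi_{b/a^{2}}.
\]
Substituting $s=b/a^{2}$ this reads $\phi_{as}\circ h_{a}=h_{a}\circ\phi_{s}$, equivalently $h_{a}^{-1}\circ\phi_{s}\circ h_{a}=\phi_{s/a}$ for $a>0$. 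Comparing the infinitesimal generators of these two flows in $s$ gives $(h_{a})_{\ast}X=a\,X$. Finally, putting $a=e^{t}$ and recalling that $t\mapsto h_{e^{t}}=\exp(t\cdot\Delta)$ is the flow of $\Delta$, I would differentiate $(h_{e^{t}})_{\ast}X=e^{t}X$ at $t=0$, using $\frac{\mathrm{d}}{\mathrm{d}t}\big|_{t=0}(h_{e^{t}})_{\ast}X=-[\Delta,X]$, to conclude $[\Delta,X]=-X$.

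For a smooth left action $H:\mathcal{G}_{2}\times M\rightarrow M$ I would run the mirror-image argument. The restrictions to $(\mathbb{R},\cdot)$ and $(\mathbb{R},+)$ again produce the graded bundle $\pi$ with weight vector field $\Delta$ and a complete generator $Y$ with flow $\psi_{s}=H((1,s),\cdot)$, but now the left-action axiom $g.(g'.p)=(gg').p$ applied to $(a,b)=(a,0)(1,b/a)=(1,b/a^{2})(a,0)$ composes the two factors in the opposite order and yields $h_{a}\circ\psi_{b/a}=\psi_{b/a^{2}}\circ h_{a}$, hence $\psi_{c}=h_{a}\circ\psi_{ac}\circ h_{a}^{-1}$ and $(h_{a})_{\ast}Y=a^{-1}Y$. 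The same differentiation then gives $[\Delta,Y]=Y$, so $Y$ is homogeneous of weight $+1$.

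I do not anticipate a real obstacle: the lemma reduces to \reftext{Theorem~\ref{thm:eqiv_real}} together with the elementary fact that an action of the group $(\mathbb{R},+)$ is the flow of a complete vector field. The only place that calls for some care is the sign bookkeeping in passing from the conjugation relation $h_{a}^{-1}\circ\phi_{s}\circ h_{a}=\phi_{s/a}$ to the bracket; and it is worth noticing that the difference between $[\Delta,X]=-X$ (right) and $[\Delta,Y]=+Y$ (left) comes precisely from the order in which the two submonoids slide past each other in \eqref{eqn:commutation_submonoids}, which is exactly why the left/right distinction is unavoidable for $\mathcal{G}_{2}$.
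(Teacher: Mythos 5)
Your proposal is correct and follows essentially the same route as the paper: restrict $H$ to the submonoids $(\mathbb{R},\cdot)$ and $(\mathbb{R},+)$ of $\mathcal{G}_{2}$, invoke \reftext{Theorem~\ref{thm:eqiv_real}} to get the graded bundle $\pi:M\to M_0$ with weight vector field $\Delta$, and take $X$ (resp.\ $Y$) to be the complete generator of the $(\mathbb{R},+)$-flow. The only (harmless) difference is in the last step: the paper computes $[\Delta,X]$ via the commutator of flows $X^{-s}\circ\Delta^{-t}\circ X^{s}\circ\Delta^{t}=H_{(1,-ts+o(ts))}$, whereas you extract from \eqref{eqn:commutation_submonoids} the exact conjugation identity $(h_a)_{\ast}X=a\,X$ and differentiate at $a=e^t$, $t=0$ --- which in fact verifies \reftext{Definition~\ref{def:hgm_real}} for weight $-1$ directly, with all signs handled correctly.
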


\begin{proof}
By \reftext{Theorem~\ref{thm:eqiv_real}}, the homogeneity structure $h: \mathbb
{R}\times M\rightarrow M$ obtained as the restriction of $H$ to the
submonoid $(\mathbb{R},\cdot)\subset\mathcal{G}_{2}$ (i.e.,
$h_{a}=H_{(a,0)}$) defines a graded bundle structure on
$h_{0}:M\rightarrow M_{0}$. Clearly, the flow of the corresponding
weight vector field $\Delta$ is given by $t\mapsto H_{(e^{t},0)}$.

Consider first the case when $H$ is a right $\mathcal{G}_{2}$-action.
Let $X\in\mathfrak{X}(M)$ be the infinitesimal generator of the
action of $s\mapsto H_{(1,s)}$.
In order to calculate the Lie bracket of $\Delta$ and $X$ we will
calculate the corresponding commutator of flows, i.e.
\begin{align*}
&X^{-s}\circ\Delta^{-t} \circ X^{s}\circ\Delta^{t} =
H_{(1,-s)}\circ H_{(e^{-t},0)}\circ H_{(1,s)}\circ H_{(e^{t},0)}\\
&\quad =H_{(e^{t},0)(1,s)(e^{-t},0)(1,-s)}
\overset{\text{\reftext{\eqref{eqn:G_2_multiplication}}}}{=}
H_{(1,-ts+o(ts))}= X^{-ts+o(ts)}\ .
\end{align*}
The latter should correspond to the $ts$-flow of $[\Delta, X]$ and
hence \reftext{\eqref{eqn:commutator_X_Delta}} holds.

In the case when $H$ is a left $\mathcal{G}_{2}$-action denote by $Y$
the infinitesimal generator of the action $s\mapsto H_{(1,s)}$. Now
$H_{g}\circ H_{g'} = H_{gg'}$, so the commutator $Y^{-s}\Delta^{-t}
\circ Y^{s}\circ\Delta^{t}$ equals
$H_{(1,-s)(e^{-t},0)(1,s)(e^{t},0)} = Y^{ts + o(ts)}$, hence $[\Delta,
Y]= Y$.
\end{proof}

\begin{example}\label{ex:T2M}
Consider the right $\mathcal{G}_{2}$-action on $\mathrm{T}^{2} M$
described in \reftext{Example~\ref{ex:TkM_Gk}}. We shall use standard
coordinates $(x^{i}, \dot{x}^{i}, \ddot{x}^{i})$ on $\mathrm{T}^{2}
M$.\footnote{If $[\gamma]_{2} \sim(x^{i}, \dot{x}^{i}, \ddot
{x}^{i})$, then $\gamma(t) = (x^{i} + t \dot{x}^{i} + \frac{1}{2}
\ddot{x}^{i} + o(t^{2}))$.} We have
\[
(x^{i},\dot{x}^{i},\ddot{x}^{i}).(a,b)=(x^{i},a\dot
{x}^{i},a^{2}\ddot{x}^{i}+b\dot{x}^{i}).
\]
It is clear that in this case the homogeneity structure on $\mathrm
{T}^{2} M$ is just the standard degree 2 homogeneity structure, the
weight vector field equals $\Delta= \dot{x}^{i}\partial_{\dot
{x}^{i}} + 2\, \ddot{x}^{i} \partial_{\ddot{x}^{i}}$, and that the
additional vector field $X$ of weight $-1$ is simply $X=\dot
{x}^{i}\partial_{\ddot{x}^{i}}$.
\end{example}

\begin{example}
Let us now focus on the left $\mathcal{G}_{2}$-action on $\mathrm
{T}^{2\ast} M$ described in \reftext{Example~\ref{ex:Gk_Tk_starM}}. We shall
use standard coordinates $(p_{i}, p_{ij})$ on $\mathrm{T}^{2\ast
}M$.\footnote{If $[(f, x)] \sim(p_{i}, p_{ij})$ then $f(x)=0$ and
$f(x+h) = p_{i} h^{i} + \frac{1}{2} p_{ij}h^{i} h^{j} + o(|h|^{2})$, where $p_{ij}=p_{ji}$.}
We have
\[
(a, b).(p_{i}, p_{ij}) = (a p_{i}, a p_{ij}+b p_{i}\,p_{j}),
\]
so $\Delta= p_{i}\partial_{p_{i}} + p_{ij}\partial_{p_{ij}}$ and $Y
= p_{i}p_{j}\partial_{p_{ij}}$ has indeed weight $1$ with respect to
the standard vector bundle structure on $\mathrm{T}^{2\ast} M$.
\end{example}

Our goal now is to prove the inverse of \reftext{Lemma~\ref{lem:G2_actions_infinitesimally}}, i.e. to characterize right (resp.,
left) $\mathcal{G}_{2}$-actions in terms of a homogeneity structure
and a complete vector field $X$ of weight $-1$ (resp., $Y$ of weight +1).
Obviously, by our preliminary considerations knowing the actions of the
two canonical submonoids of $\mathcal{G}_{2}$ allows to determine the
action of the Lie subgroup ${\mathcal{G}}^{\text{inv}}_{2}$ of
invertible elements of $\mathcal{G}_{2}$. Yet problems with extending
this action on the whole $\mathcal{G}_{2}$ may appear.

\begin{lemma}
\label{lem:G_2_two}
Let $\tau:M\rightarrow M_{0}$ be a graded bundle (with the associated
weight vector field $\Delta$ and the corresponding homogeneity
structure $h:\mathbb{R}\times M\rightarrow M$) and let $X\in\mathfrak
{X}(M)$ (resp., $Y\in\mathfrak{X}(M)$) be a complete vector field of
weight $-1$ (resp., $+1$) i.e. $[\Delta,X]=-X$ (resp., $[\Delta, Y] =
Y$). Then the formulas
\begin{equation}\label{eqn:full_G2_actions}
p.(a,b):= X^{b/a}(h_{a}(p)) = h_{a}( X^{b/a^{2}}(p))\qquad
\left( \text{resp., }(a, b).p := h_{a}(Y^{b/a}(p))=
Y^{b/a^{2}}(h_{a}(p))\right)
\end{equation}
define a smooth right (resp., left) action of the group of invertible
elements $\mathcal{G}_{2}^{inv}\subset\mathcal{G}_{2}$. Here
$t\mapsto X^{t}$ (resp., $t\mapsto Y^{t}$) denotes the flow of the
vector field $X$ (resp., $Y$).
\end{lemma}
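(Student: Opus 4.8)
The plan is to verify directly that the formulas in \eqref{eqn:full_G2_actions} satisfy the defining identities of a right (resp., left) $\mathcal{G}_2^{\mathrm{inv}}$-action, using only two ingredients: the homogeneity relation $h_s\circ h_t=h_{st}$ and the commutation between the flow of $X$ (resp., $Y$) and the homotheties induced by the weight condition $[\Delta,X]=-X$ (resp., $[\Delta,Y]=Y$). First I would record the key conjugation formula. Since $X$ has weight $-1$, Definition~\ref{def:hgm_real} (in the form $(h_t)_\ast X = t^{-(-1)}X\circ h_t = t\,X\circ h_t$ for $t>0$) gives, at the level of flows, $h_t\circ X^s = X^{st}\circ h_t$ for all $t>0$ and $s\in\mathbb{R}$; equivalently $X^s\circ h_t = h_t\circ X^{s/t}$. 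This is precisely the infinitesimal shadow of \eqref{eqn:commutation_submonoids}, and it already shows that the two expressions in each half of \eqref{eqn:full_G2_actions} agree: $X^{b/a}(h_a(p)) = h_a(X^{(b/a)/a}(p)) = h_a(X^{b/a^2}(p))$. I would note that for $(a,b)\in\mathcal{G}_2^{\mathrm{inv}}$ we have $a\ne 0$ (and $h_a$ makes sense for all real $a$, including negative ones, since $h$ is an action of the whole monoid $(\mathbb{R},\cdot)$, so the weight-$(-1)$ scaling law extends from $t>0$ to all $t\ne0$ by continuity/the group property), so $b/a$ and $b/a^2$ are well defined.

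The second step is smoothness. The map $M\times\mathcal{G}_2^{\mathrm{inv}}\to M$, $(p,(a,b))\mapsto X^{b/a}(h_a(p))$, is a composition of smooth maps: $(p,a)\mapsto h_a(p)$ is smooth because $h$ is a smooth action, $(a,b)\mapsto b/a$ is smooth on $\{a\ne0\}$, and the flow map $(q,t)\mapsto X^t(q)$ is smooth wherever defined — here it is defined for all $t$ because $X$ is complete by hypothesis. Hence the formula defines a smooth map on $M\times\mathcal{G}_2^{\mathrm{inv}}$.

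The third and main step is the action identity $\big(p.(a,b)\big).(A,B) = p.\big((a,b)(A,B)\big)$, where the product on the right is \eqref{eqn:G_2_multiplication}, i.e. $(aA,\,aB+bA^2)$. I would compute the left side using the first form of \eqref{eqn:full_G2_actions} for the outer action and the second form for the inner one:
\[
\big(p.(a,b)\big).(A,B) = X^{B/A}\!\big(h_A\big(h_a(X^{b/a^2}(p))\big)\big)
= X^{B/A}\!\big(h_{aA}(X^{b/a^2}(p))\big).
\]
Now push $h_{aA}$ through $X^{b/a^2}$ using the conjugation law $h_t\circ X^s = X^{st}\circ h_t$ with $t=aA$: this yields $X^{B/A}\circ X^{(b/a^2)\cdot aA}\circ h_{aA} = X^{B/A + bA/a}\circ h_{aA}$ applied to $p$. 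So the left side equals $X^{B/A + bA/a}(h_{aA}(p))$. On the other hand $p.\big((aA,aB+bA^2)\big) = X^{(aB+bA^2)/(aA)}(h_{aA}(p)) = X^{B/A + bA/a}(h_{aA}(p))$, which matches. Finally I would check that $(1,0)$ — the identity of $\mathcal{G}_2$ — acts trivially: $p.(1,0) = X^{0}(h_1(p)) = X^0(p) = p$. The left-action case is entirely parallel: using $[\Delta,Y]=Y$ one gets the conjugation law $h_t\circ Y^s = Y^{s/t}\circ h_t$ (the opposite sign in the exponent, reflecting weight $+1$), and the same bookkeeping with $(a,b).p := h_a(Y^{b/a}(p)) = Y^{b/a^2}(h_a(p))$ verifies $(a,b).\big((A,B).p\big) = \big((a,b)(A,B)\big).p$.

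I expect the only real care to be needed in two places: making sure the exponents of the flows are consistently tracked through the conjugations (an easy place to flip a sign, which is exactly why the weight convention $[\Delta,X]=-X$ versus $[\Delta,Y]=+Y$ must be kept straight), and justifying that the weight-$w$ scaling law for a vector field, originally stated for $t>0$ in \eqref{eqn:def_hgm_vf}, extends to negative $t$ — this follows because $h$ is an action of the whole multiplicative monoid, so $h_{-1}$ is a diffeomorphism commuting appropriately with $\Delta$, and the flow identity $h_t\circ X^s = X^{st}\circ h_t$ then holds for all $t\ne 0$ by the group property of $\{h_t : t\ne0\}$ together with the $t>0$ case and the $t=-1$ case. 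Completeness of $X$ (resp., $Y$) is what makes all the flow expressions globally defined, so no domain issues arise.
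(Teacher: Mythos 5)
Your proof is correct in substance but follows a genuinely different route from the paper's. The paper first observes that $\Delta$ and $X$ generate a copy of $\mathfrak{aff}(\mathbb{R})$, invokes Palais' theorem to integrate this infinitesimal action to an action of the connected, simply connected group $\operatorname{Aff}_+(\mathbb{R})\simeq{\mathcal{G}}^{\text{inv}}_{2+}$ (which in particular shows that the two expressions in \eqref{eqn:full_G2_actions} agree on that component), and then extends to all of ${\mathcal{G}}^{\text{inv}}_{2}$ via the semidirect-product decomposition $(-a,b)=(a,b)(-1,0)=(-1,0)(a,-b)$. You instead verify the action axiom $\bigl(p.(a,b)\bigr).(A,B)=p.\bigl((a,b)(A,B)\bigr)$ by a direct computation using the single conjugation identity $h_t\circ X^s=X^{st}\circ h_t$; your exponent bookkeeping is right ($B/A+bA/a=(aB+bA^{2})/(aA)$, matching \eqref{eqn:G_2_multiplication}), the well-definedness, smoothness and unit checks are fine, and the left-action case is indeed symmetric. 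Your argument is more elementary (no Palais, no case split on the sign of $a$) and treats both components of ${\mathcal{G}}^{\text{inv}}_{2}$ at once; what the paper's route buys is that it only ever needs the flow identity for $t>0$ on the identity component, isolating the sign issue in one explicit extension step.

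The one genuinely under-justified point in your write-up is exactly the one you flag: the validity of $h_t\circ X^s=X^{st}\circ h_t$ for $t<0$. Your stated reason --- that $h_{-1}$ ``commutes appropriately with $\Delta$'' --- does not yield $(h_{-1})_\ast X=-X\circ h_{-1}$: commuting with $\Delta$ is a property shared by homogeneous fields of every weight and carries no sign information. The correct justification, which the paper also relies on when it asserts $(h_a)_\ast X_p=aX_{h_a(p)}$ ``for every $a\in\mathbb{R}$'', is structural: by Remark~\ref{rem:euler_weight} and Lemma~\ref{lem:hol_hmg_fun_R}, the hypothesis $[\Delta,X]=-X$ forces $X=\sum_\alpha P_\alpha\,\partial_{y^{\alpha}_{w^{\alpha}}}$ in graded coordinates with each $P_\alpha$ a homogeneous polynomial of weight $w^{\alpha}-1$, so the scaling law $(h_a)_\ast X=a\,X$ is a polynomial identity in $a$, valid for all $a\neq 0$ and not only for $a>0$. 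With that substitution your proof is complete.
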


\begin{proof}
We shall restrict our attention to the case of the right action. The
reasoning for the case of the left action is analogous.

Note that the Lie algebra generated by vector fields $\Delta$ and $X$
is a non-trivial two-dimensional Lie algebra, thus it is isomorphic to
the Lie algebra $\mathfrak{aff}(\mathbb{R})$ of the Lie group
$$
\operatorname{Aff}(\mathbb{R})=\left\{
\begin{pmatrix} c & d\\ 0& 1
\end{pmatrix}
: c\neq0,\, d\in\mathbb{R}\right\}\subset\operatorname{Gl}_{2}(\mathbb{R})
$$
of affine transformations of $\mathbb{R}$. The identification
${\mathcal{G}}^{\text{inv}}_{2} \simeq\operatorname{Aff}(\mathbb
{R})$ is given by
\[
(a, b)\mapsto
\begin{pmatrix} 1/a & b/a^{2}\\ 0& 1
\end{pmatrix}
.
\]
Let $\operatorname{Aff}_{+}(\mathbb{R})$ be the subgroup of
orientation-preserving affine transformations of $\mathbb{R}$,
\[
\operatorname{Aff}_{+}(\mathbb{R}) = \left\{
\begin{pmatrix} c & d\\ 0& 1
\end{pmatrix}
: c>0,\, d\in\mathbb{R}\right\}.
\]
The subgroup $\operatorname{Aff}_{+}(\mathbb{R})$ is the connected
and simply-connected Lie group integrating
$\mathfrak{aff}(\mathbb{R})$. Clearly under the above identification
it is isomorphic to ${\mathcal{G}}^{\text{inv}}_{2+} = \{(a, b)\in
\mathcal{G}_{2}: a>0\}$. Thus, due to Palais' theorem and according to \reftext{\eqref{eqn:commutation_submonoids}},
formula \reftext{\eqref{eqn:full_G2_actions}} is a well-defined action of the
group ${\mathcal{G}}^{\text{inv}}_{2+}$ on $M$, i.e., both formulas
$p._{1}(a,b):=X^{b/a}(h_{a}(p))$ and $p._{2}(a,b):=h_{a}(
X^{b/a^{2}}(p))$ coincide for $(a,b)\in{\mathcal{G}}^{\text
{inv}}_{2+}$ and the resulting map is indeed a right action of
${\mathcal{G}}^{\text{inv}}_{2+}$.

Our goal now is to show that \reftext{\eqref{eqn:full_G2_actions}} is a
well-defined action of the whole ${\mathcal{G}}^{\text{inv}}_{2}$. It
is straightforward to check that $p._{1}(-1,0)=p._{2}(-1,0)=h_{-1}(p)$.
Now let us check that the formulas for $._{1}$ and $._{2}$ coincide for
elements of ${\mathcal{G}}^{\text{inv}}_{2}\setminus{\mathcal
{G}}^{\text{inv}}_{2+}$. Indeed, observe first that by \reftext{Definition~\ref{def:hgm_real}}, since $X$ is homogeneous of
weight~$-1$, for every $a\in
\mathbb{R}$
\[
(h_{a})_{\ast}X_{p}=a X_{h_{a}(p)}\ .
\]
Integrating the above equality we obtain the following result for flows:
\[
h_{a}(X^{t}(p))=X^{t a}(h_{a}(p))\ ,
\]
for every $a,t\in\mathbb{R}$.
Using this result we get for $a>0$
\[
p._{1}(-a,b)=X^{-b/a}(h_{-a}(p))=h_{-a}(X^{b/a^{2}}(p))=p._{2}(-a,b)\ ,
\]
i.e., \reftext{\eqref{eqn:full_G2_actions}} is well-defined on the whole
${\mathcal{G}}^{\text{inv}}_{2}$. To check that this is indeed an
action of ${\mathcal{G}}^{\text{inv}}_{2}$ note that
\[
p.(-a,b)=h_{-a}(X^{b/a^{2}}(p))=h_{-1}\left[
h_{a}(X^{b/a^{2}}(p))\right] =\left[ p.(a,b)\right] .(-1,0)
\]
and
\[
p.(-a,b)=X^{-b/a}(h_{-a}(p))=X^{-b/a}(h_{a}(h_{-1}(p)))=\left[
p.(-1,0)\right] .(a,-b)\ .
\]
In other words, the operation $p\mapsto p.(a,b)$ is compatible with the
following decomposition
%
\begin{equation}
\label{eqn:sd_product}
(-a,b)=(a,b)(-1, 0) =(-1, 0)(a,-b)\ .
\end{equation}
Now it suffices to observe that the latter formula allows to express
every multiplication of two elements in ${\mathcal{G}}^{\text
{inv}}_{2}$ as a composition of multiplications of elements of
${\mathcal{G}}^{\text{inv}}_{2+}$ and $(-1,0)$ (in other words,
${\mathcal{G}}^{\text{inv}}_{2}$ is a semi-direct product of
${\mathcal{G}}^{\text{inv}}_{2+}$ and $C_{2}\simeq\{(\pm1, 0)\}$).
Since formula \reftext{\eqref{eqn:full_G2_actions}} is multiplicative with
respect to ${\mathcal{G}}^{\text{inv}}_{2+}$ and $C_{2}$ and respects \reftext{\eqref{eqn:sd_product}}, it is a well-defined action of the whole
${\mathcal{G}}^{\text{inv}}_{2}$.
\end{proof}

We have thus shown that the infinitesimal data related with the right
(resp., left) action of $\mathcal{G}_{2}$ on a smooth manifold $M$,
i.e., a weight vector field $\Delta$ together with a complete vector
field $X$ (resp., $Y$) on $M$ satisfying \reftext{\eqref{eqn:commutator_X_Delta}}, integrates to a right (resp., left) action of
the Lie group ${\mathcal{G}}^{\text{inv}}_{2}$ on $M$. However, there
is no guarantee that this action will extend to the action of the whole
$\mathcal{G}_{2}\supset{\mathcal{G}}^{\text{inv}}_{2}$. This will
happen if formula \reftext{\eqref{eqn:full_G2_actions}} has a well-defined and
smooth extension to $a=0$. In particular situations this condition can
be checked by a direct calculation, yet no general criteria are known
to us.

In the forthcoming paragraphs we shall study (local) conditions of this
kind after restrict ourselves to the cases when the graded bundle $(M,
\Delta)$ is of low degree. The cases of left and right actions turned
out to be essentially different and so we treat them separately.

\paragraph*{Right $\mathcal{G}_{2}$-actions of degree at most~3}
Let us now classify (locally) all possible right $\mathcal
{G}_{2}$-actions on a smooth manifold $M$ such that the associated
graded bundle structure $(M,\Delta)$ is of degree at most 3. That is,
locally on $M$ we have graded coordinates
$(x^{i},y^{s}_{1},y_{2}^{S},y_{3}^{\sigma})$ where the lower index
indicates the degree.
By the results of \reftext{Remark~\ref{rem:euler_weight}}, the general formula
for a vector field $X$ of degree $-$1 in such a setting is
%
\begin{equation}
\label{eqn:field_weight_-1}
X=F^{s}(x)\partial_{y_{1}^{s}}+G^{S}_{s}(x) y_{1}^{s}\partial
_{y_{2}^{S}} + \left( H^{\sigma}_{S}(x) y^{S}_{2} + \frac{1}{2}
I^{\sigma}_{sr}(x) y_{1}^{s}y_{1}^{r}\right) \partial_{y_{3}^{\sigma
}}\ ,
\end{equation}
where $F^{s}$, $G^{S}_{s}$, $H^{\sigma}_{S}$, $I^{\sigma}_{sr}$ are
smooth functions on the base. The following result characterizes these
fields $X$ which give rise to a right action of the monoid $\mathcal
{G}_{2}$ on $M$:
%
\begin{lemma}\label{lem:right_g2_deg_less_4}
Let $(M,\Delta)$ be a graded bundle of degree at most 3 and let $X$ be
a weight $-1$ vector field on $M$ given locally by formula \reftext{\eqref{eqn:field_weight_-1}}. Then the right action $H:M\times{\mathcal
{G}}^{\text{inv}}_{2}\rightarrow M$ defined in \reftext{Lemma~\ref{lem:G_2_two}} extends to a smooth right action of $\mathcal{G}_{2}$ on
$M$ if and only if $F^{s}=0$ and $H^{\sigma}_{S} G^{S}_{s}=0$ for
every indices $s$ and $\sigma$. Equivalently,
$X$ is a degree~$-1$ vector field tangent to the fibration
$M=M^{3}\rightarrow M^{1}$, such that the differential weight $-2$
operator $X\circ X$ vanishes on all functions on $M$ of weight less or
equal $3$.
\end{lemma}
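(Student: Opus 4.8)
The plan is to reduce the problem to an explicit computation in local graded coordinates. Recall from \reftext{Lemma~\ref{lem:G_2_two}} that for $a\neq0$ the action is given by $p.(a,b)=h_{a}(X^{b/a^{2}}(p))=X^{b/a}(h_{a}(p))$, and that $\{(a,b):a\neq0\}$ is open and dense in $\mathcal{G}_{2}\simeq\mathbb{R}^{2}$. Hence a smooth extension of this map to $M\times\mathcal{G}_{2}$, if it exists, is unique; moreover, once a smooth extension exists it is automatically a right action, because the identity $(p.g).g'=p.(g\cdot g')$ already holds whenever $g,g'$ (hence also $g\cdot g'$) have nonzero first coordinate, and both sides of it depend continuously on $(g,g',p)$. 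So everything comes down to deciding whether the coordinate expression of $(a,b,p)\mapsto h_{a}(X^{b/a^{2}}(p))$, defined for $a\neq0$, extends smoothly across $a=0$. We work in graded coordinates $(x^{i},y^{s}_{1},y^{S}_{2},y^{\sigma}_{3})$ as in \reftext{\eqref{eqn:field_weight_-1}}.

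First I would compute the flow of $X$. As $X$ is homogeneous of weight $-1$ it sends a function of weight $w$ to one of weight $w-1$ (\reftext{Remark~\ref{rem:euler_weight}}), so, the bundle having degree $\leq 3$, $X^{4}$ kills every coordinate function; in particular $X$ is complete and its flow acts on coordinates through the terminating series $(X^{t})^{\ast}f=\sum_{j=0}^{3}\tfrac{t^{j}}{j!}X^{j}f$. Computing $X,X^{2},X^{3}$ on $x^{i},y^{s}_{1},y^{S}_{2},y^{\sigma}_{3}$ from \reftext{\eqref{eqn:field_weight_-1}}, then setting $t=b/a^{2}$ and composing with $h_{a}$ (which multiplies a weight-$w$ coordinate by $a^{w}$), one obtains for $a\neq0$ a formula polynomial in $b$, $a$ and $a^{-1}$ and in the coordinates of $p$. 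The only terms carrying negative powers of $a$ are: $\tfrac{b}{a}F^{s}$ in the $y^{s}_{1}$-component; $\tfrac{b^{2}}{2a^{2}}G^{S}_{s}F^{s}$ in the $y^{S}_{2}$-component; and $\tfrac{b^{2}}{2a}\bigl(H^{\sigma}_{S}G^{S}_{s}+I^{\sigma}_{sr}F^{r}\bigr)y^{s}_{1}$ together with $\tfrac{b^{3}}{6a^{3}}\bigl(H^{\sigma}_{S}G^{S}_{s}+I^{\sigma}_{sr}F^{r}\bigr)F^{s}$ in the $y^{\sigma}_{3}$-component.

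The statement then reads off. The expression extends smoothly across $a=0$ if and only if all of these negative powers of $a$ drop out. The term $\tfrac{b}{a}F^{s}$ forces $F^{s}=0$; granting $F^{s}=0$, the $y^{S}_{2}$-term vanishes identically and the two $y^{\sigma}_{3}$-terms collapse to $\tfrac{b^{2}}{2a}H^{\sigma}_{S}G^{S}_{s}y^{s}_{1}$ and $0$, so one further needs $H^{\sigma}_{S}G^{S}_{s}=0$; these two conditions visibly remove every negative power of $a$. (They are necessary, not merely sufficient: if $F\neq0$, then $\tfrac{b}{a}F^{s}$ has no limit as $(a,b)\to(0,b_{0})$ with $b_{0}\neq0$.) Conversely, when $F^{s}=0$ and $H^{\sigma}_{S}G^{S}_{s}=0$ the formula becomes the honest polynomial map $x^{i}\mapsto x^{i}$, $y^{s}_{1}\mapsto a\,y^{s}_{1}$, $y^{S}_{2}\mapsto a^{2}y^{S}_{2}+b\,G^{S}_{s}y^{s}_{1}$, $y^{\sigma}_{3}\mapsto a^{3}y^{\sigma}_{3}+ab\bigl(H^{\sigma}_{S}y^{S}_{2}+\tfrac12 I^{\sigma}_{sr}y^{s}_{1}y^{r}_{1}\bigr)$, which is smooth on $M\times\mathcal{G}_{2}$ and coincides with $p.(a,b)$ for $a\neq0$; by the first paragraph it is the desired right $\mathcal{G}_{2}$-action.

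It remains to identify these conditions with the reformulation, which also makes manifest that they are coordinate-independent, so the local analysis globalizes. Indeed, $F^{s}=0$ says precisely that $X$ --- which by \reftext{\eqref{eqn:field_weight_-1}} has no $\partial_{x^{i}}$-component --- is tangent to the fibration $M=M^{3}\to M^{1}$; and, granting this, the weight $-2$ operator $X\circ X$ annihilates $x^{i}$, $y^{s}_{1}$ and $y^{S}_{2}$ (since $X^{2}(y^{S}_{2})=G^{S}_{s}F^{s}=0$), while $X^{2}(y^{\sigma}_{3})=(H^{\sigma}_{S}G^{S}_{s}+I^{\sigma}_{sr}F^{r})y^{s}_{1}=H^{\sigma}_{S}G^{S}_{s}y^{s}_{1}$, so $X\circ X$ kills all functions of weight $\leq 3$ exactly when $H^{\sigma}_{S}G^{S}_{s}=0$. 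The main (and essentially only) work is the bookkeeping of $X$, $X^{2}$, $X^{3}$ on the weight-$3$ coordinates and their conjugation by $h_{a}$; there is no conceptual obstacle, the two points deserving care being the density argument of the first paragraph and the verification that the obstruction terms genuinely fail to have a limit as $a\to0$.
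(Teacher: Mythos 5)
Your proof is correct and follows essentially the same route as the paper's: compute the flow of $X$ explicitly (your terminating exponential series is the same computation as the paper's ODE integration), conjugate by $h_a$ with $t=b/a^2$, and read off that smooth extendability across $a=0$ is exactly the vanishing of the coefficients $F^s$ and $H^\sigma_S G^S_s$ of the negative powers of $a$. Your added remarks --- that density of $\{a\neq 0\}$ makes the extension unique and automatically an action, and that the invariant reformulation via tangency and $X\circ X$ lets the local analysis globalize --- are correct and tidy up points the paper leaves implicit.
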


\begin{proof}
In order to find the flow $t\mapsto X^{t}$ we need to solve the
following system of ODEs
\begin{align*}
\dot{x}^{i}&=0\\
\dot{y}_{1}^{s}&= F^{s}(x)\\
\dot{y}_{2}^{S} &= G^{S}_{s}(x)y_{1}^{s}(t)\\
\dot{y}_{3}^{\sigma}&= H^{\sigma}_{S}(x) y_{2}^{S}(t)+\frac{1}{2}
I^{\sigma}_{sr}(x) y_{1}^{s}(t)y_{1}^{r}(t)\ ,
\end{align*}
which gives the following output
\begin{align*}
x^{i}(t)&=x^{i}(0),\\
y_{1}^{s}(t)&=y_{1}^{s}(0) + t F^{s},\\
y_{2}^{S}(t)&=y_{2}^{S}(0)+ tG^{S}_{s} y_{1}^{s}(0)+\frac{1}{2} t^{2}
G^{S}_{s}F^{s},\\
y_{3}^{\sigma}(t)&=y_{3}^{\sigma}(0)+ t (H^{\sigma}_{S}
y_{2}^{S}(0)+\frac{1}{2}I^{\sigma}_{sr}
y_{1}^{s}(0)y_{1}^{r}(0))+\frac{1}{2} t^{2}(H^{\sigma}_{S} G^{S}_{s}
y_{1}^{s}(0) + I^{\sigma}_{sr} F^{s} y_{1}^{r}(0)) \\
&\quad {}+ \frac{1}{6} t^{3} (H^{\sigma}_{S} G^{S}_{s}F^{s}+I^{\sigma}_{sr}
F^{s} F^{r})\ .
\end{align*}
Now, by \reftext{Lemma~\ref{lem:G_2_two}}, the action of $(a, b)\in\mathcal
{G}_{2}$ on $p\in M$ should be defined as $h_{a}\left(
X^{b/a^{2}}(p)\right) $, that is, it affects the coordinate
$y_{w}^{\alpha}$ of weight $w$ by $y_{w}^{\alpha}\mapsto a^{w}
y_{w}^{\alpha}(b/a^{2})$. Thus we have
\begin{align*}
H_{(a,b)}^{\ast}x^{i}&=x^{i},\\
H_{(a,b)}^{\ast}y_{1}^{s}&=a y_{1}^{s}+ \frac{b}{a} F^{s},\\
H_{(a,b)}^{\ast}y_{2}^{S}&=a^{2} y_{2}^{S}+ b G^{S}_{s}
y_{1}^{s}+\frac{1}{2} \frac{b^{2}}{a^{2}} G^{S}_{s} F^{s},\\
H_{(a,b)}^{\ast}y_{3}^{\alpha}&=a^{3} y_{3}^{\sigma}+ a b (H^{\sigma
}_{S} y_{2}^{S}+ I^{\sigma}_{sr} y_{1}^{s} y_{1}^{r})+\frac{1}{2}
\frac{b^{2}}{a}(H^{\alpha}_{S} G^{S}_{s} y_{1}^{s} + 2 I^{\alpha
}_{sr} F^{s} y_{1}^{r})+\frac{1}{6} \frac{b^{3}}{a^{3}} (H^{\sigma
}_{S} G^{S}_{s} F^{s} + I^{\alpha}_{sr} F^{s} F^{r})\, .
\end{align*}
Now it is clear that the action $H_{(a,b)}$ depends smoothly on $(a,b)$
if and only if $F^{s}=0$ and $H^{\sigma}_{S} G^{S}_{s}=0$.

The vector field $X$ is tangent to the fibration $M^{3}\rightarrow
M^{1}$ if and only if $F^{s}=0$. Then the condition on the differential
operator $X\circ X$ means that $0 = X(X(y^{\sigma}_{3})) =
X(X(y^{s}_{1} y^{S}_{2}))$ which simplifies to $H^{\sigma}_{S} G^{S}_{s}=0$.
\end{proof}

Note that in degree $1$ (i.e., when $(M, \Delta)$ is a vector bundle) $X$ has to be the zero vector field, hence
$
v.(a, b) = a\cdot v
$
for any $(a, b)\in\mathcal{G}_2$ and $v\in M$. 

In degree $2$  the only possibility is $X=G^S_s y_1^s\partial_{y_2^S}$. In geometric terms, $(G^S_s)$ defines a vector bundle morphism
$$
\phi: F^1 \rightarrow \widehat{F^2}, \quad \phi(x, y^s_1) = (x, \widehat{y}^S_2 = G^S_s(x) y^s_1),
$$
covering $\operatorname{id}_{M_0}$ where $M=F^2\rightarrow F^1\rightarrow M_0$ is the tower of affine bundle projections \eqref{eqn:tower_grd_spaces} associated with $(M, \Delta)$. Thus,  there is a one-to-one correspondence between degree $2$ right $\mathcal{G}_2$-actions and vector bundle morphisms $\phi$ as above. The correspondence is defined by the formula 
$$
v.(a, b) = h_a(v) + b\, \phi(v),
$$
where $v\in F^2$; $h_a$, for $a\in \mathbb{R}$, are homotheties of $(F^2, \Delta)$; and 
$
+ : F^2\times_{M_0}\widehat{F^2} \rightarrow F^2
$  
is the canonical action of the core bundle on a graded bundle.
Obviously in higher degrees finding the precise conditions for $X$ gets more complicated (yet is still doable in finite time) and more classes of admissible weight -1 vector fields appear.

\paragraph*{Left $\mathcal{G}_{2}$-actions}
In case of left $\mathcal{G}_{2}$-actions we meet a problem of
integrating a vector field $Y$ of weight 1. Even if the associated
graded bundle $(M, \Delta)$ is a vector bundle (a graded bundle of
degree 1), a vector field of weight 1 has a general form
\[
\frac{1}{2} F_{ij}^{k}(x) y^{i} y^{j} \partial_{y^{k}} + F^{a}_{i}(x)
y^{i} \partial_{x^{a}}
\]
and, in general, is not integrable in quadratures. Therefore, the
problem of classifying left $\mathcal{G}_{2}$-action seems to be more
difficult.
We will solve it in the simplest case when $(M, \Delta)$ is a vector bundle.

\begin{lemma}\label{lem:left_g2_action_vb}
Let $\tau:E\rightarrow M$ be a vector bundle. There is a one-to-one
correspondence between smooth left $\mathcal{G}_{2}$-actions on $E$
such that the multiplicative submonoid $(\mathbb{R},\cdot)\subset
\mathcal{G}_{2}$ acts by the homotheties of $E$ and symmetric
bi-linear operations $\bullet: E\times_{M} E\rightarrow E$ such that
for any $v\in E$
%
\begin{equation}\label{eqn:Identity}
v\bullet(v\bullet v) = 0\ .
\end{equation}
This correspondence is given by the following formula
%
\begin{equation}\label{eqn:left_G_2-action}
(a, b).v = a\,v + b\, v\bullet v\ ,
\end{equation}
where $(a,b)\in\mathcal{G}_{2}$ and $v\in E$.
\end{lemma}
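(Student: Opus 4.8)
The plan is to exploit the structure from Lemma~\ref{lem:G2_actions_infinitesimally} together with the fact that a weight $+1$ vector field on a vector bundle $E$ which is moreover tangent to the fibres of $\tau:E\to M$ must be fibrewise a homogeneous quadratic vector field, hence encoded by a symmetric bilinear map on fibres. First I would establish the direction ``action $\Rightarrow$ operation''. Given a smooth left $\mathcal{G}_2$-action $(a,b).v$ with $(\mathbb{R},\cdot)$ acting by homotheties of $E$, Lemma~\ref{lem:G2_actions_infinitesimally} yields a complete weight $+1$ vector field $Y$ with $[\Delta,Y]=Y$. By Remark~\ref{rem:euler_weight}, in local linear fibre coordinates $(x^a,y^i)$ of the vector bundle, $Y$ has components that are homogeneous of weight $2$, so $Y=\tfrac12 F^k_{ij}(x)y^iy^j\partial_{y^k}+F^a_i(x)y^i\partial_{x^a}$. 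The key point is that for a \emph{left} action the flow of $Y$ is $s\mapsto H_{(1,s)}$, and $H_{(1,s)}$ commutes with $h_a=H_{(a,0)}$ only via the monoid relation $(1,s)(a,0)=(a,sa^2)=(a,0)(1,s)$; wait — actually from $(a,0)(1,s)=(a, a s)$ and $(1,s)(a,0)=(a, s a^2)$ these do \emph{not} commute, which is exactly why $[\Delta,Y]=Y\neq 0$. However, I claim $Y$ must be \emph{vertical}, i.e.\ $F^a_i=0$: this follows because $H_{(1,s)}$ is a fibrewise map — indeed $h_0\circ H_{(1,s)}=H_{(0,0)(1,s)}=H_{(0,0)}=h_0$ shows $H_{(1,s)}$ covers the identity on $M_0=M$, so its generator $Y$ is $\tau$-vertical. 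Hence $Y=\tfrac12 F^k_{ij}(x)y^iy^j\partial_{y^k}$, and setting $(v\bullet w)^k:=\tfrac12(F^k_{ij}+F^k_{ji})v^iw^j$ — or more invariantly $v\bullet v:=Y(v)$ regarded as an element of the fibre $E_{\tau(v)}$ via the canonical identification $\mathrm{T}^{\mathrm{vert}}_v E_{\tau(v)}\cong E_{\tau(v)}$ — defines a fibrewise symmetric bilinear operation $\bullet:E\times_M E\to E$.

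Next I would verify formula~\eqref{eqn:left_G_2-action} and the cubic identity~\eqref{eqn:Identity}. Since $Y$ is vertical quadratic, its flow on the fibre $E_p\cong\mathbb{R}^n$ solves $\dot y^k=(y\bullet y)^k$; but a homogeneous quadratic ODE need not integrate nicely in general — here, however, the existence of a \emph{global $\mathcal{G}_2$-action} forces strong constraints. Concretely, applying Lemma~\ref{lem:G_2_two} the action on the invertible part is $(a,b).v=h_a(Y^{b/a}(v))=Y^{b/a^2}(h_a(v))$, and smoothness at $a=0$ of $(a,b)\mapsto(a,b).v$ is the extension condition. Writing $Y^s(v)$ as a power series in $s$: $Y^s(v)=v+s\,(v\bullet v)+\tfrac{s^2}{2}\cdot 2\,v\bullet(v\bullet v)+\cdots$, one computes $h_a(Y^{b/a^2}(v))=a v+ b\,(v\bullet v)+\tfrac{b^2}{a}\,v\bullet(v\bullet v)+O(b^3/a^2)$; smoothness at $a=0$ (evaluating, e.g., on the linear fibre coordinates $y^k$, which are weight $1$) forces precisely the vanishing of the $b^2/a$-coefficient, i.e.\ $v\bullet(v\bullet v)=0$, which is~\eqref{eqn:Identity}. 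Once $v\bullet(v\bullet v)=0$, the polarized identities show all higher terms vanish too, so $Y^s(v)=v+s\,(v\bullet v)$ exactly, whence $(a,b).v=h_a(v)+b\,v\bullet v=a v+b\,v\bullet v$, which is~\eqref{eqn:left_G_2-action}.

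For the converse direction ``operation $\Rightarrow$ action'', I would start from a symmetric bilinear $\bullet:E\times_M E\to E$ satisfying~\eqref{eqn:Identity} and \emph{define} $(a,b).v:=a v+b\,v\bullet v$, then check directly that this is a left $\mathcal{G}_2$-action. This is a computation using the multiplication rule $(a,b)(A,B)=(aA,aB+bA^2)$ from~\eqref{eqn:G_2_multiplication}: one must verify $(a,b).\big((A,B).v\big)=\big((a,b)(A,B)\big).v$. Expanding the left side gives $a(Av+B\,v\bullet v)+b\,(Av+B\,v\bullet v)\bullet(Av+B\,v\bullet v)$, and by bilinearity and symmetry this is $aA v+aB\,v\bullet v+b\big(A^2\,v\bullet v+2AB\,v\bullet(v\bullet v)+B^2\,(v\bullet v)\bullet(v\bullet v)\big)$. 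The terms $v\bullet(v\bullet v)$ vanish by~\eqref{eqn:Identity}; the term $(v\bullet v)\bullet(v\bullet v)$ also vanishes, since polarizing $v\bullet(v\bullet v)=0$ in the direction of $w$ and specializing appropriately (or directly: $(v\bullet v)\bullet(v\bullet v)$ appears as a coefficient in $\big((v+v\bullet v)\bullet(v+v\bullet v)\big)$-type expansions controlled by the identity) yields $0$. Hence the left side equals $aA v+(aB+bA^2)\,v\bullet v=\big((a,b)(A,B)\big).v$, as required. Smoothness is obvious since the formula is polynomial in $(a,b)$, and by construction $(\mathbb{R},\cdot)\ni a\mapsto(a,0).v=a v$ is the homothety action. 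The two constructions are mutually inverse: starting from an action we recover $\bullet$ from $Y=\partial_b|_{b=0}H_{(1,b)}$, and starting from $\bullet$ the generated $Y$ is $v\mapsto v\bullet v$.

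The main obstacle I anticipate is the careful handling of the cancellation of the auxiliary terms $v\bullet(v\bullet v)$ and $(v\bullet v)\bullet(v\bullet v)$ in both directions — in particular making rigorous the claim that the cubic identity~\eqref{eqn:Identity} is \emph{exactly} (not merely sufficient for) the smoothness of the action at $a=0$, and that it suffices to kill all obstructions rather than just the leading one. One must argue that once the $b^2/a$ term vanishes, the flow $Y^s$ truncates after the linear term in $s$, which is where completeness of $Y$ and the polarization identities are genuinely used; a clean way is to note that $v\bullet(v\bullet v)=0$ for all $v$ polarizes to $v\bullet(w\bullet w)+2w\bullet(v\bullet w)=0$, and iterating this together with symmetry forces every degree-$\geq 3$ iterated $\bullet$-monomial to vanish, so the flow of the quadratic field $Y$ is globally affine in the time parameter. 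The rest is routine bookkeeping with~\eqref{eqn:G_2_multiplication}.
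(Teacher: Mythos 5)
Your proposal is correct and follows the same overall strategy as the paper: reduce to a single fibre (your observation that $H_{(0,0)}\circ H_{(1,s)}=H_{(0,0)}$ is the paper's computation $\tau((a,b).v)=((0,0)(a,b)).v=\tau(v)$ in disguise), extract the vertical weight-$1$ quadratic field $Y$, read off $\bullet$ from $Y(v)=v\bullet v$, and check that associativity of \eqref{eqn:left_G_2-action} reduces to $2AbB\,v\bullet(v\bullet v)+bB^2(v\bullet v)\bullet(v\bullet v)=0$, with the second identity following from \eqref{eqn:Identity} by polarization. The one genuinely different ingredient is how you extract the formula from smoothness at $a=0$. The paper differentiates $(t,tb).v=t\,v(b)$ at $t=0$ and, using the decomposition $(t,tb)=(t,0)\cdot\ldots$ together with $(0,tb)=(tb,0)(0,1)$, concludes in one stroke that $v(b)=v+b\,(0,1).v$, i.e.\ that the integral curves of $Y$ are straight lines; the cubic identity is then obtained afterwards from associativity. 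You instead expand the flow $Y^s(v)$ in powers of $s$ and argue that smoothness kills the coefficient of $b^2/a$, obtaining \eqref{eqn:Identity} first and then truncating the flow. Both routes work; the paper's is slightly slicker because it never needs to control the higher Taylor coefficients of the flow.

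Two places in your version need tightening. First, the expansion $a\,v+b\,v\bullet v+\frac{b^2}{a}\,v\bullet(v\bullet v)+O(b^3/a^2)$ is only valid for $|b/a|$ small, so it does not directly describe the behaviour near $a=0$ with $b$ fixed; to make the deduction rigorous, differentiate $(a,b).v$ twice in $b$ at $b=0$ (giving $2\,v\bullet(v\bullet v)/a$) and use continuity of this second derivative as $a\to 0$. Second, your claim that polarizing \eqref{eqn:Identity} forces \emph{every} degree-$\ge 3$ iterated $\bullet$-monomial to vanish is too strong: the full polarization only gives $u\bullet(v\bullet w)+v\bullet(u\bullet w)+w\bullet(u\bullet v)=0$, which does not annihilate each trilinear term separately. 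What you actually need — and what is true — is that every iterated monomial with all arguments equal to the \emph{same} $v$ vanishes: by commutativity any such tree with at least three leaves contains either $v\bullet(v\bullet v)$ or $(v\bullet v)\bullet(v\bullet v)$ as a subtree, and the latter vanishes by the paper's substitution $w=v'\bullet v'$ into the polarized identity. With those two repairs your argument is complete.
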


\begin{proof}
We shall denote the action of an element $(a,b)\in\mathcal{G}_{2}$ on
$v\in E$ by $(a,b).v$. Observe first that we can restrict our attention
to a single fiber of $E$. Indeed, since $\tau(v)=(0,0).v$, we have
$\tau((a, b).v)=(0,0).(a, b).v = (0,0).v =\tau(v)$ and thus $(a,b).v$
belongs to the same fiber of $E$ as $v$ does. In consequence, without
any loss of generality, we may assume that $E$ is a vector
space.\smallskip

By the results of \reftext{Lemma~\ref{lem:G2_actions_infinitesimally}}, every
left $\mathcal{G}_{2}$-action on $E$ induces a weight 1 homogeneous
vector field $Y\in\mathfrak{X}(E)$. The flow of such a $Y$ at time
$t$ corresponds to the action of an element $(1,t)\in\mathcal{G}_{2}$.

Choose now a basis $\{e_{i}\}_{i\in I}$ of $E$ and denote by $\{y^{i}\}
_{i\in I}$ the related linear coordinates. In this setting (cf.~\reftext{Remark~\ref{rem:euler_weight}}) $Y$ writes as
\[
Y = \frac{1}{2} F_{ij}^{k}\ y^{i} y^{j} \partial_{y^{k}}, \quad\text
{where}\quad F_{ij}^{k} = F_{ji}^{k}\ .
\]
Let us now define the product $\bullet$ on base elements of $E$ by the formula
\[
e_{i} \bullet e_{j} = F_{ij}^{k} e_{k}\ ,
\]
and extend it be-linearly to an operation $\bullet:E\times
_{M}E\rightarrow E$. In other words, $Y(v)=v\bullet v$, where we use
the canonical identification of the vertical tangent vectors of
$\mathrm{T}E$ with elements of $E$.
\smallskip

We shall now show that the action of $\mathcal{G}_{2}$ is given by
formula \reftext{\eqref{eqn:left_G_2-action}}. Recall that by \reftext{Lemma~\ref{lem:G_2_two}} the action of $(a,b)\in{\mathcal{G}}^{\text{inv}}_{2}$
on $v\in E$ is given by
%
\begin{equation}\label{eqn:G_2_inv_on_E}
(a, b).v = a\cdot v(b/a)\ ,
\end{equation}
where $t\mapsto v(t):=(1,t).v$ denotes the integral curve of $Y$
emerging from $v(0)=v$ at $t=0$. The question is whether the above
formula extends smoothly to the whole $\mathcal{G}_{2}$.

Note that for $t\neq0$ we have $(t,tb).v=t v(b)$. Thus, assuming the
existence of a smooth extension of \reftext{\eqref{eqn:G_2_inv_on_E}} to the
whole $\mathcal{G}_{2}$, we have
\[
\left.\frac{\mathrm{d}^{}}{\mathrm{d}t^{}}\right|_{t=0} (t,
tb).v=v(b)\ .
\]
On the other hand, by the Leibniz rule we can write
\begin{align*}
\left.\frac{\mathrm{d}^{}}{\mathrm{d}t^{}}\right|_{t=0} (t,
tb).v&=\left.\frac{\mathrm{d}^{}}{\mathrm{d}t^{}}\right|_{t=0} (t,
0).v + \left.\frac{\mathrm{d}^{}}{\mathrm{d}t^{}}\right|_{t=0} (0,
t b).v=\left.\frac{\mathrm{d}^{}}{\mathrm{d}t^{}}\right|_{t=0} (t,
0).v + \left.\frac{\mathrm{d}^{}}{\mathrm{d}t^{}}\right|_{t=0} (t
b,0).(0,1).v\\
&=\left.\frac{\mathrm{d}^{}}{\mathrm{d}t^{}}\right|_{t=0} t\cdot v+
\left.\frac{\mathrm{d}^{}}{\mathrm{d}t^{}}\right|_{t=0} tb \cdot
(0,1).v= v+b\cdot(0,1).v\ .
\end{align*}
We conclude that for every $b\in\mathbb{R}$
%
\begin{equation}\label{eqn:action_G2}
v(b)=v+b\cdot(0,1).v\ ,
\end{equation}
i.e., integral curves of $Y$ are straight lines or constant curves.
Differentiating the above formula with respect to $b$ we get
$(0,1).v=Y(v)=v\bullet v$. Using this and \reftext{\eqref{eqn:G_2_inv_on_E}} we
get for $(a,b)\in{\mathcal{G}}^{\text{inv}}_{2}$
\[
(a,b).v=a\cdot v(b/a)=a\left( v+b/a\cdot v\bullet v\right) =a\cdot
v+b\cdot v\bullet v\ .
\]
Clearly this formula extends smoothly to the whole $\mathcal{G}_{2}$.
We have thus proved that any smooth $\mathcal{G}_{2}$-action on $E$
such that $(\mathbb{R},\cdot)\subset\mathcal{G}_{2}$ acts by the
homotheties of $E$ is of the form \reftext{\eqref{eqn:left_G_2-action}}.

Clearly formula \reftext{\eqref{eqn:left_G_2-action}} considered for some, a
priori arbitrary, bi-linear operation $\bullet$ defines a left
$\mathcal{G}_{2}$-action if and only if $(aA, aB+bA^{2}).v\overset{\text{\reftext{\eqref{eqn:G_2_multiplication}}}}{=}(a,b)(A,B).v$ for every $v\in E$
and $(a,b),(A,B)\in\mathcal{G}_{2}$. It is a matter of a simple
calculation to check that this requirement leads to the following condition:
\[
2 AbB v\bullet(v\bullet v) + bB^{2} (v\bullet v)\bullet(v\bullet v)=0.
\]
Since $v$ and $a$, $b$, $A$ and $B$ were arbitrary this is equivalent
to $v\bullet(v\bullet v)=0$ and $(v\bullet v)\bullet(v\bullet v)=0$
for every $v\in E$. To end the proof it amounts to show that this
latter condition is induced by the former one. Indeed, after a short
calculation formula \reftext{\eqref{eqn:Identity}} considered for $v=v'+t\cdot
w$ leads to the following condition
\[
t\left[ w\bullet(v'\bullet v')+2v'\bullet(v'\bullet w)\right]
+t^{2}\left[ v'\bullet(w\bullet w)+2w\bullet(w\bullet v')\right]
=0\ .
\]
Thus, as $t\in\mathbb{R}$ was arbitrary, $w\bullet(v'\bullet
v')+2v'\bullet(v'\bullet w)=0$ for every $v',w\in E$. In particular,
taking $w=v'\bullet v'$ and using \reftext{\eqref{eqn:Identity}} we get
$(v'\bullet v')\bullet(v'\bullet v')=0$. This ends the proof. 
\end{proof}

\paragraph*{Actions of the monoid of 2 by 2 matrices}
Let $\mathcal{G}:= \operatorname{M}_{2\times2}(\mathbb{R})$ be the
monoid of $2$ by $2$ matrices with the natural matrix multiplication.
We shall end our considerations in this section by studying smooth
actions of this structure on manifolds.

We have a canonical isomorphism $\mathcal{G}\simeq\mathcal{G}^{\text
{op}}$ which sends a matrix to its transpose. Thus, unlike the case of
$\mathcal{G}_{2}$, left and right $\mathcal{G}$-actions are in
one-to-one correspondence.
Moreover, any $\mathcal{G}$-action gives rise to left and right
$\mathcal{G}_{2}$-action as there is a canonical
monoid embedding $\mathcal{G}_{2}\rightarrow\mathcal{G}$, $(a,
b)\mapsto\left(
\begin{array}{cc}
a & b \\
0 & a^{2}
\end{array}
\right) $. This observation allows to prove easily the following result.

\begin{lemma}\label{lem:matrix} Any $\mathcal{G}$-action on a
manifold $M$ gives rise to a double graded bundle $(M, \Delta_{1},
\Delta_{2})$ equipped with two complete vector fields $X$, $Y$ of
weights $(1, -1)$ and $(-1, 1)$ respectively, such that $[X, Y] =
\Delta^{1}-\Delta^{2}$.
\end{lemma}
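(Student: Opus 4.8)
The plan is to exploit the canonical monoid embedding $\iota:\mathcal{G}_2\hookrightarrow\mathcal{G}=\operatorname{M}_{2\times 2}(\mathbb{R})$, $(a,b)\mapsto\left(\begin{smallmatrix}a & b\\ 0 & a^2\end{smallmatrix}\right)$, together with a second embedding obtained by composing $\iota$ with transposition (equivalently, the embedding into lower-triangular matrices). Given a $\mathcal{G}$-action on $M$, precomposition with these two embeddings produces two $\mathcal{G}_2$-actions; since $\mathcal{G}\simeq\mathcal{G}^{\text{op}}$ via transpose, I can arrange that one of them is a right $\mathcal{G}_2$-action and the other a left $\mathcal{G}_2$-action. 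By \reftext{Lemma~\ref{lem:G2_actions_infinitesimally}}, the right action endows $M$ with a graded bundle structure (weight vector field $\Delta_1$) together with a complete weight $-1$ vector field, and the left action endows $M$ with a graded bundle structure (weight vector field $\Delta_2$) together with a complete weight $+1$ vector field. The key point to check is that the two homogeneity structures arising this way — both being restrictions of the single $\mathcal{G}$-action to the diagonal matrices $\left(\begin{smallmatrix}a & 0\\ 0 & a^2\end{smallmatrix}\right)$ in one case and $\left(\begin{smallmatrix}a^2 & 0\\ 0 & a\end{smallmatrix}\right)$ in the other — are \emph{compatible}, i.e. their flows commute, because all diagonal matrices commute inside $\mathcal{G}$. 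Commuting homogeneity structures are exactly what a double graded bundle is, so $(M,\Delta_1,\Delta_2)$ is a double graded bundle.

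Next I would pin down the bi-weights of $X$ and $Y$. The vector field $X$ is the infinitesimal generator of the one-parameter subgroup $s\mapsto\left(\begin{smallmatrix}1 & s\\ 0 & 1\end{smallmatrix}\right)$, while $Y$ generates $s\mapsto\left(\begin{smallmatrix}1 & 0\\ s & 1\end{smallmatrix}\right)$. To read off the weight of $X$ with respect to $\Delta_1$ and with respect to $\Delta_2$, I compute the relevant flow commutators in $\mathcal{G}$, exactly as in the proof of \reftext{Lemma~\ref{lem:G2_actions_infinitesimally}}. Writing $D_1(t)=\left(\begin{smallmatrix}e^{t} & 0\\ 0 & e^{2t}\end{smallmatrix}\right)$ (whose action-flow is that of $\Delta_1$) and $D_2(t)=\left(\begin{smallmatrix}e^{2t} & 0\\ 0 & e^{t}\end{smallmatrix}\right)$ (that of $\Delta_2$), a direct matrix computation gives $D_1(t)^{-1}\left(\begin{smallmatrix}1 & s\\ 0 & 1\end{smallmatrix}\right)D_1(t)=\left(\begin{smallmatrix}1 & e^{t}s\\ 0 & 1\end{smallmatrix}\right)$ and $D_2(t)^{-1}\left(\begin{smallmatrix}1 & s\\ 0 & 1\end{smallmatrix}\right)D_2(t)=\left(\begin{smallmatrix}1 & e^{-t}s\\ 0 & 1\end{smallmatrix}\right)$, which translate into $[\Delta_1,X]=X$ and $[\Delta_2,X]=-X$, i.e. $X$ has bi-weight $(1,-1)$; the symmetric computation for the lower-triangular subgroup gives $Y$ bi-weight $(-1,1)$. (Here I should be careful about the conventions relating the two submonoids of $\mathcal{G}$ to the submonoid $(\mathbb{R},\cdot)\subset\mathcal{G}_2$ and hence to $\Delta_1$ versus $\Delta_2$; a sign/ordering bookkeeping check is needed, but it is routine.)

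Finally, the bracket relation $[X,Y]=\Delta_1-\Delta_2$ is obtained by working inside the Lie algebra $\mathfrak{gl}_2(\mathbb{R})$: the generators of $X$, $Y$, $\Delta_1$, $\Delta_2$ correspond (up to the sign conventions fixed above) to $E_{12}=\left(\begin{smallmatrix}0 & 1\\ 0 & 0\end{smallmatrix}\right)$, $E_{21}=\left(\begin{smallmatrix}0 & 0\\ 1 & 0\end{smallmatrix}\right)$, $\operatorname{diag}(1,2)$, $\operatorname{diag}(2,1)$, and $[E_{12},E_{21}]=E_{11}-E_{22}=\operatorname{diag}(1,-1)=\operatorname{diag}(1,2)-\operatorname{diag}(2,1)$. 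Since the assignment of a vector field on $M$ to an element of $\mathfrak{gl}_2(\mathbb{R})$ coming from a $\mathcal{G}$-action is a Lie algebra anti-homomorphism (or homomorphism, depending on the left/right convention), the matrix identity transports to $[X,Y]=\Delta_1-\Delta_2$ on $M$. The main obstacle, and the only place demanding real care rather than routine verification, is the sign and left-versus-right bookkeeping: making sure that the two $\mathcal{G}_2$-actions extracted from the single $\mathcal{G}$-action are genuinely one left and one right, that their associated weight vector fields are correctly identified as $\Delta_1$ and $\Delta_2$ (not swapped), and that the resulting bracket comes out as $\Delta_1-\Delta_2$ with the stated sign; once the conventions are fixed consistently, everything follows from \reftext{Lemma~\ref{lem:G2_actions_infinitesimally}} applied twice plus elementary $2\times 2$ matrix algebra.
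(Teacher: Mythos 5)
Your overall strategy is the same as the paper's---extract two commuting diagonal $(\mathbb{R},\cdot)$-submonoids of $\mathcal{G}$ to obtain the double grading, take $X$, $Y$ to be the infinitesimal generators of the two unipotent one-parameter subgroups, and read off all brackets from $2\times 2$ matrix algebra---but you make a genuinely different choice of diagonal submonoids. The paper uses $G_1=\{\operatorname{diag}(t,1)\}$ and $G_2=\{\operatorname{diag}(1,t)\}$, so its $\Delta_1,\Delta_2$ are the two ``partial'' weight vector fields (the ones realized explicitly in the example on $\mathrm{J}^2_{(0,0)}(\mathbb{R}^2,M)$). Your two $\mathcal{G}_2$-embeddings instead single out $\{\operatorname{diag}(a,a^2)\}$ and $\{\operatorname{diag}(a^2,a)\}$, whose weight vector fields are $\Delta_1+2\Delta_2$ and $2\Delta_1+\Delta_2$. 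These still commute (so you still get a double graded bundle, just a different, less canonical one), and since $(\Delta_1+2\Delta_2)-(2\Delta_1+\Delta_2)=\Delta_2-\Delta_1$ the bi-weights of $X,Y$ and the bracket identity come out in the same form up to interchanging the two labels; so your route does establish the lemma as stated. What it buys is that completeness of $X$, $Y$ and the relations $[\Delta,X]=\mp X$ come for free from \reftext{Lemma~\ref{lem:G2_actions_infinitesimally}} applied twice instead of being re-derived.

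Two concrete sign slips remain, both of the kind you flagged as needing care, and both must be fixed before the proof is complete. First, for a right action the conjugation $D_1(t)^{-1}\left(\begin{smallmatrix}1 & s\\ 0 & 1\end{smallmatrix}\right)D_1(t)=\left(\begin{smallmatrix}1 & e^{t}s\\ 0 & 1\end{smallmatrix}\right)$ gives $\Phi^t\circ X^s\circ\Phi^{-t}=X^{e^t s}$, i.e.\ $(\Phi^t)_*X=e^tX$ and hence $[\Delta_1,X]=-X$, not $+X$; indeed $+X$ would contradict your own invocation of \reftext{Lemma~\ref{lem:G2_actions_infinitesimally}}, which forces $X$ to have weight $-1$ with respect to the weight vector field of the right $\mathcal{G}_2$-action that produces it. Second, $\operatorname{diag}(1,2)-\operatorname{diag}(2,1)=\operatorname{diag}(-1,1)=-(E_{11}-E_{22})$, so the displayed identity $[E_{12},E_{21}]=\operatorname{diag}(1,2)-\operatorname{diag}(2,1)$ is off by a sign. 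Neither error is structural (and, for what it is worth, the paper's own proof asserts $w(X)=(-1,1)$ and $[X,Y]=\Delta_2-\Delta_1$, which disagrees with both the lemma's statement and the worked example, so the convention trouble is genuine on all sides), but as written your two key displayed computations do not yet yield the stated conclusion; they do once the right-action sign conventions are carried through consistently.
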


\begin{proof} Since the homogeneity structures defined by the actions of
the submonoids $G_{1} = \{\operatorname{diag}(t, 1):t\in\mathbb{R}\}
$, $G_{2}=\{\operatorname{diag}(1, t):t\in\mathbb{R}\}$,
$G_{1}\simeq(\mathbb{R}, \cdot)\simeq G_{2}$, commute, the
corresponding weight vector fields $\Delta_{1}$, $\Delta_{2}$ also
commute and give rise to a double graded structure $(M, \Delta_{1},
\Delta_{2})$. Define vector fields $X$, $Y$ as infinitesimal actions
of the subgroups $ \left(
\begin{array}{cc}
1 & t \\
0 & 1
\end{array}
\right) $ and $\left(
\begin{array}{cc}
1 & 0 \\
t & 1
\end{array}
\right) $, respectively. It is straightforward to check, as we did for
$\mathcal{G}_{2}$-actions, that $[\Delta_{1}, X] = - X$, $[\Delta
_{2}, X] = X$, so $X$ has weight $w(X)=(-1,1)$. Similarly, $[\Delta
_{1}, Y] = Y$, $[\Delta_{2}, Y] = -Y$, so $w(Y)=(1,-1)$, and moreover
\[
Y^{-s}\circ X^{-t}\circ Y^{s} \circ X^{t} =
\begin{pmatrix}
1-ts & -t^{2}s\\
s^{2}t & 1 + st + o(st)
\end{pmatrix}
\]
so $[X, Y] = \Delta_{2}-\Delta_{1}$ as we claimed.
\end{proof}
%
\begin{example} Let $M$ be a manifold and consider the space $\mathrm
{J}^{2}_{(0,0)}(\mathbb{R}\times\mathbb{R}, M)$ of all 2nd-jets at $(0,0)$ of maps $\gamma: \mathbb{R}^{2} \rightarrow
M$. Given local coordinates $(x^{j})$ on $M$, the adapted local
coordinates $(x_{00}^{j}, x_{10}^{j}, x_{01}^{j}, x_{20}^{j},
x_{11}^{j}, x_{02}^{j})$ on $\mathrm{J}^{k}_{(0,0)}(\mathbb{R}^{2},
M)$ of $[\gamma]_{2}$ are defined as coefficients of the Taylor
expansion
%
\begin{equation}
\gamma(t,s) = (\gamma^{j}(t,s)), \quad\gamma^{j}(t,s) = x_{00}^{j}
+ x_{10}^{j} t + x_{01}^{j} s +
x_{20}^{j} \frac{t^{2}}{2} + x_{11}^{j} ts + x_{02}^{j} \frac
{s^{2}}{2} + o(t^{2}, ts, s^{2}).
\end{equation}
The right action of $A\in\mathcal{G}$ on $[\gamma]_{2}\in M$ equals
$[\gamma(at+bs, ct+ds)]_{2}$ and reads as
%
\begin{align}
(x_{00}^{j},  x_{10}^{j}, x_{01}^{j}, x_{20}^{j}, x_{11}^{j}, x_{02}^{j}).
\begin{pmatrix}
a& b\\ c& d
\end{pmatrix}
&= (x_{00}^{j}, a x_{10}^{j} + c x_{01}^{j}, b x_{10}^{j} +
d x_{01}^{j}, a^{2} x_{20}^{j} + 2 ac x_{11}^{j} + c^{2} x_{02}^{j}, ab x_{20}^{j}
\nonumber\\
&\quad {}
+
(ad+bc) x_{11}^{j} + cd x_{02}^{j}, b^{2} x_{20}^{j} + 2bd x_{11}^{j} +
d^{2} x_{02}^{j})
\end{align}
Hence the action of $
\begin{pmatrix}1 & t \\ 0 &1
\end{pmatrix}
$ yields a vector field $X = x_{10}^{j} \partial_{x_{01}^{j}}
+{x_{20}^{j}}\,\partial_{x_{11}^{j}} + 2\,x_{11}^{j}\partial
_{x_{02}^{j}}$ of weight $(1, -1)$. Similarly, the action of $
\begin{pmatrix}1 & 0 \\ t &1
\end{pmatrix}
$ gives rise to a vector field $Y = x_{01}^{j} \partial_{x_{10}^{j}} +
x_{02}^{j} \partial_{x_{11}^{j}} + 2 x_{11}^{j} \partial
_{x_{20}^{j}}$ of weight $(-1, 1)$. We have
\[
[X, Y] = x_{10}^{j}\partial_{x_{10}^{j}} - x_{01}^{j}\partial
_{x_{01}^{j}} + 2 x_{20}^{j}\partial_{x_{20}^{j}} -2
x_{02}^{j}\partial_{x_{02}^{j}} = \Delta_{1} - \Delta_{2},
\]
where $\Delta_{1} = x_{10}^{j}\partial_{x_{10}^{j}} + 2
x_{20}^{j}\partial_{x_{20}^{j}}+ x_{11}^{j}\partial_{x_{11}^{j}}$,
$\Delta_{2} = x_{01}^{j}\partial_{x_{01}^{j}}+x_{11}^{j} \partial
_{x_{11}^{j}} + 2 x_{02}^{j}\partial_{x_{02}^{j}}$ are commuting
weight vector fields. This example has a direct generalization for the
case of higher order $(1, 1)$-velocities.
\end{example}

\section{On actions of the monoid of real numbers on supermanifolds}
\label{sec:super}

\paragraph*{Super graded bundles}
The notions of a super vector bundle (see e.g. \cite{BCC_sVB_2011})
and a graded bundle generalize naturally to the notion of a \emph
{super graded bundle}, i.e., a graded bundle in the category of
supermanifolds. The latter is a \emph{super fiber bundle} (see e.g.,
\cite{BCC_sVB_2011})
$\pi: \mathcal{E}\rightarrow\mathcal{M}$ in which one can
distinguish a class of $\mathbb{N}$-graded fiber coordinates so that
transition functions preserve this gradation (\reftext{Definition~\ref{def:s_grd_bndl}}).
On the other hand, super graded bundles are a particular example of
non-negatively graded manifolds in the sense of Voronov \cite
{Voronov:2001qf}. These are defined as supermanifolds with a privileged
class of atlases in which one assigns $\mathbb{N}_{0}$-weights to
particular coordinates. Coordinates of positive weights are
`cylindrical' and coordinate changes are decreed to be polynomials
which preserve $\mathbb{Z}_{2}\times\mathbb{N}_{0}$-gradation. The
coordinate parity is not determined by its $\mathbb{N}_{0}$-weight.

Our goal in this section is to prove a direct analog of \reftext{Theorem~\ref{thm:eqiv_real}} in supergeometry: $(\mathbb{R}, \cdot)$-actions on
supermanifolds are in one-to-one correspondence with super graded bundles.

To fix the notation, given a supermanifold defined by its structure
sheaf $(M, \mathcal{O}_{\mathcal{M}})$, we shall usually denote it
shortly by $\mathcal{M}$. Here $M := |\mathcal{M}|$ is a topological
space called the \emph{body of $\mathcal{M}$}. Elements of $\mathcal
{O}_{\mathcal{M}}(U)$ will be called \emph{local functions} on
$\mathcal{M}$. For an open subset $U$ of $M$ let $\mathcal
{J}_{\mathcal{M}}(U)$ be the ideal of nilpotent elements in $\mathcal
{O}_{\mathcal{M}}(U)$. The quotient sheaf $\mathcal{O}_{\mathcal
{M}}/\mathcal{J}_{\mathcal{M}}$ defines a structure of a real smooth
manifold on the body $|\mathcal{M}|$. For local functions $f, g\in
\mathcal{O}_{\mathcal{M}}(U)$ a formula $f=g + o(\mathcal
{J}_{\mathcal{M}}^{i})$ means that $f-g\in(\mathcal{J}_{\mathcal
{M}}(U))^{i}$.

The definition of a super graded bundle, alike its classical analog,
will be given in steps. We begin by introducing the notion of a super
graded space, which is, basically speaking, a superdomain $\mathbb
{R}^{m|n}$ equipped with an atlas of global graded coordinates.

\begin{definition}\label{def:s_grd_space}
Let $\mathbf{d}:= (\mathbf{d}_{\bar{0}}|\mathbf{d}_{\bar{1}})$,
where $\mathbf{d}_{\varepsilon} = (d_{\varepsilon,1}, \ldots,
d_{\varepsilon,k})$, $\varepsilon\in\mathbb{Z}_{2}$, $1\leq i\leq
k$ are sequences of non-negative integers, and let $|\mathbf
{d}_{\varepsilon}|:=\sum_{i=1}^{k} d_{\varepsilon, i}$.

A \emph{super graded space of rank $\mathbf{d}$} is a supermanifold
$\mathrm{W}$ isomorphic\footnote{In the context of supermanifolds we
should rather speak about isomorphism than diffeomorphism. Since there
is no concept of a topological supermanifold, there is no need to
distinguish between homeomorphisms, $C^{1}$-diffeomorphism or smooth
diffeomorphisms.} to a superdomain $\mathbb{R}^{|\mathbf{d}_{\bar
{0}}| \big\vert \vert \mathbf{d}_{\bar{1}}|}$ and equipped with an
equivalence class of graded coordinates.
Here we assume that the number of even (resp. odd) coordinates of
weight $i$ is equal to $d_{\bar{0}, i}$ (resp. $d_{\bar{1}, i}$) where
$1\leq i\leq k$.
Two systems of graded coordinates are \emph{equivalent} if they are
related by a polynomial transformation with coefficients in $\mathbb
{R}$ which preserve both the parity and the weights (cf. \reftext{Definition~\ref{def:gr_space}}).

A \emph{morphism} between super graded spaces $W_{1}$ and $W_{2}$ is a
map $\Phi: W_{1}\rightarrow W_{2}$ which in some (and thus any) graded
coordinates writes as a polynomial respecting the $\mathbb
{N}_{0}\times\mathbb{Z}_{2}$-gradation.
\end{definition}

Informally speaking, a super graded bundle is a collection of super
graded spaces parametrized
by a base supermanifold.

\begin{definition}\label{def:s_grd_bndl}
A \emph{super graded bundle of rank $\mathbf{d}$} is a super fiber
bundle $\pi:\mathcal{E}\rightarrow\mathcal{M}$ with the typical
fiber $\mathbb{R}^{\mathbf{d}}$ considered as a super graded space of
rank $\mathbf{d}$. In other words, there exists a cover $\{\mathcal
{U}_{i}\}$ of the supermanifold $\mathcal{M}$ such that the total
space $\mathcal{E}$ is obtained by gluing trivial super graded bundles
$\mathcal{U}_{i}\times\mathbb{R}^{\mathbf{d}}\rightarrow\mathcal
{U}_{i}$ by means of transformations $\phi_{ij}: \mathcal
{U}_{ij}\times\mathbb{R}^{\mathbf{d}}\rightarrow\mathcal
{U}_{ij}\times\mathbb{R}^{\mathbf{d}}$ of the form
%
\begin{eqnarray*}[cc]
y^{a'} &= \sum_{I, J} Q^{a'}_{I, J}(x, \theta) y^{a_{1}}\ldots
y^{a_{i}} \xi^{A_{1}}\ldots\xi^{A_{j}}, \\
\xi^{A'} &= \sum_{I, J} Q^{A'}_{I, J}(x, \theta) y^{a_{1}}\ldots
y^{a_{i}} \xi^{A_{1}}\ldots\xi^{A_{j}}\ .
\end{eqnarray*}
Here $\mathcal{U}_{ij}:= \mathcal{U}_{i}\cap\mathcal{U}_{j}$;
$Q^{a'}_{I, J}$ and $Q^{A'}_{I, J}$ are local functions of (super)
coordinate functions $(x^{i}, \theta^{\alpha})$ on $\mathcal
{U}_{ij}$; $(y^{a},\xi^{A})$ and $(y^{a'},\xi^{A'})$ are graded super
coordinates on fibers $\mathbb{R}^{\mathbf{d}}$; and the summation is
over such sets of indices $I=(a_{1}, \ldots, a_{i})$ and $J= (A_{1},
\ldots, A_{j})$ that the parity and the weight of each monomial in the
sums on the right coincides with the parity and the weight of the
corresponding coordinate on the left.

The notion of a \emph{morphism} $\Phi: \mathcal{E}\rightarrow
\mathcal{E}'$ between super graded bundles is clear; it is enough to
assume that $\Phi$ is a morphism of supermanifolds such that the
corresponding algebra map $\Phi^{*}: \mathcal{O}_{\mathcal
{E}'}(|\mathcal{E}'|) \rightarrow\mathcal{O}_{\mathcal
{E}}(|\mathcal{E}|)$ preserves the $\mathbb{N}_{0}$-gradation.
\end{definition}

\begin{example} Higher tangent bundles have their analogs in supergeometry.
Given a supermanifold $\mathcal{M}$ a higher tangent bundle $\mathrm
{T}^{k} \mathcal{M}$ is a natural example of a super graded
bundle.\footnote{In a natural way higher tangent bundles correspond to
the (purely even) Weil algebra $\mathbb{R}[\varepsilon]/\langle
\varepsilon^{k+1}\rangle$ (see \cite
{Kolar_Michor_Slovak_nat_oper_diff_geom_1993}). In general, any super
Weil algebra gives rise to a Weil functor that can be applied to a
supermanifold (cf. \cite{BF_Weil_supermanifolds}).} For $k=2$ and
local coordinates $(x^{A})$ on $\mathcal{M}$ (even or odd) one can
introduce natural coordinates $(x^{A},\dot{x}^{B}, \ddot{x}^{C})$ on
$\mathrm{T}^{2}\mathcal{M}$ where coordinates $\dot{x}^{A}$ and
$\ddot{x}^{A}$ share the same parity as $x^{A}$ and are of weight 1
and 2, respectively. Standard transformation rules apply:
\[
x^{A'}= x^{A'}(x), \quad\dot{x}^{A'} = \dot{x}^{B}\frac{\partial
x^{A'}}{\partial x^{B}}, \quad\ddot{x}^{A'} = \ddot{x}^{B}\frac
{\partial x^{A'}}{\partial x^{B}} + \dot{x}^{C}\dot{x}^{B} \frac
{\partial^{2} x^{A'}}{\partial x^{B}\partial x^{C}}.
\]
\end{example}

\paragraph*{Homogeneity structures in the category of supermanifolds}
Also the notion of a homogeneity structure easily generalizes to the
setting of supergeometry.

\begin{definition}\label{def:super_hgm_structure}
A \emph{homogeneity structure on a supermanifold $\mathcal{M}$} is a
smooth action $h:\mathbb{R}\times\mathcal{M}\to\mathcal{M}$ of the
multiplicative monoid $(\mathbb{R}, \cdot)$ of
real numbers, i.e., $h$ is a morphism of supermanifolds such that the
following diagram
\[
\xymatrix{\mathbb{R}\times\mathbb{R}\times\mathcal{M}\ar
[d]_{m\times\operatorname{id}_\mathcal{M}} \ar[rr]^{\operatorname
{id}_\mathbb{R}\times h} && \mathbb{R}\times\mathcal{M}\ar[d]^h\\
\mathbb{R}\times\mathcal{M}\ar[rr]^h && \mathcal{M}
}
\]
commutes (here $m:\mathbb{R}\times\mathbb{R}\rightarrow\mathbb{R}$
denotes the standard multiplication) and that $h_{1} := h|_{\{1\}\times
\mathcal{M}}: \mathcal{M}\rightarrow\mathcal{M}$ is the identity
morphism. In other words,
$h$ is a morphism of supermanifolds defined by a collection of maps
$h_{t}:\mathcal{M}\to\mathcal{M}$, $t\in\mathbb{R}$ such that
$h_{ts}^{*} = h_{t}^{*}\circ h_{s}^{*}$ for any $t, s\in\mathbb{R}$
and that $h_{1} = \operatorname{id}_{\mathcal{M}}$.

A \emph{morphism} of two homogeneity structures $(\mathcal
{M}_{1},h_{1})$ and $(\mathcal{M}_{2},h_{2})$ is a morphism $\Phi
:\mathcal{M}_{1}\rightarrow\mathcal{M}_{2}$ of supermanifolds
intertwining the actions $h_{1}$ and $h_{2}$. Clearly, homogeneity
structures on supermanifolds with their morphism form a \emph{category}.

Per analogy to the standard (real) case, we say that a local function
$f\in\mathcal{O}_{\mathcal{M}}(U)$, where $U$ is an open subset of $|M|$,
is called \emph{homogeneous} of \emph{weight} $w\in\mathbb{N}$ if
\[
h_{t}^{*}(f) = t^{w}\cdot f,
\]
for any $t\in\mathbb{R}$. We assume here that the carrier $U$ of $f$
is preserved by the action $h$, i.e. $\underline{h_{t}}(U)\subset U$
for any $t\in\mathbb{R}$.
\end{definition}

\begin{remmark}\label{rem:super_hgm_structure}
Observe that given a homogeneity structure $h$ on a supermanifold
$\mathcal{M}$ the induced maps $\underline{h_{t}}: |\mathcal
{M}|\rightarrow|\mathcal{M}|$ equip the body $|\mathcal{M}|$ with a
(standard) \emph{homogeneity structure}, and so $\underline{h_{0}}:
|\mathcal{M}|\rightarrow|\mathcal{M}|_{0}$ is a (real) \emph{graded
bundle} over $|\mathcal{M}|_{0}:= h_{0}(|\mathcal{M}|)$.

Note also that, analogously to the standard case, every super graded
bundle structure $\pi:\mathcal{E}\rightarrow\mathcal{M}$ provides
$\mathcal{E}$ with a \emph{canonical homogeneity structure}
$h^{\mathcal{E}}$ defined locally in an obvious way. We call it an
action of the \emph{homotheties of $\mathcal{E}$}. Obviously, a
morphism of super graded bundles $\Phi:\mathcal{E}_{1}\rightarrow
\mathcal{E}_{2}$ induces a morphism of the related homogeneity
structures $(\mathcal{E}_{1},h^{\mathcal{E}_{1}})$ and $(\mathcal
{E}_{2},h^{\mathcal{E}_{2}})$.
\end{remmark}

Alike in the standard case, homogeneous functions on super graded
bundles are polynomial in graded coordinates.

\begin{lemma}\label{lem:super_morphisms}
Let $f$ be a homogeneous function on a trivial super graded bundle
$\mathcal{U}\times\mathbb{R}^{\mathbf{d}}$, where $\mathcal{U}$ is
a superdomain and $\mathbf{d}= (\mathbf{d}_{\bar{0}}|\mathbf
{d}_{\bar{1}})$ is as above. Then $f$ is a homogeneous polynomial in
graded fiber coordinates.
\end{lemma}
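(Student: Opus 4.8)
The plan is to reduce the statement to its classical counterpart, Lemma~\ref{lem:hol_hmg_fun_R}, by expanding $f$ along the odd directions. First I would fix notation: denote by $(x^{i})$ the even and $(\theta^{\alpha})$ the odd coordinates on the base superdomain $\mathcal{U}$ (all of weight $0$), and by $(y^{a})$ the even and $(\xi^{A})$ the odd fiber coordinates, of positive weights $w^{a}$ and $w^{A}$. Since $\mathcal{U}\times\mathbb{R}^{\mathbf{d}}$ is a superdomain, $f$ has a unique finite expansion
\[
 f=\sum_{J,K} f_{J,K}(x,y)\,\theta^{J}\xi^{K},
\]
where $J$, $K$ run over strictly increasing multi-indices in the $\theta$'s and the $\xi$'s, and each $f_{J,K}$ is a genuine smooth function of the even coordinates $(x,y)$. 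The canonical homotheties act by $h_{t}^{*}x^{i}=x^{i}$, $h_{t}^{*}\theta^{\alpha}=\theta^{\alpha}$, $h_{t}^{*}y^{a}=t^{w^{a}}y^{a}$, $h_{t}^{*}\xi^{A}=t^{w^{A}}\xi^{A}$ (Remark~\ref{rem:super_hgm_structure}), so writing $w^{K}:=\sum_{A\in K}w^{A}$ and $t\cdot y:=(t^{w^{a}}y^{a})$ the homogeneity relation $h_{t}^{*}f=t^{w}f$ becomes, after matching the (linearly independent) monomials $\theta^{J}\xi^{K}$ and using that everything depends smoothly on $t\in\mathbb{R}$,
\[
 f_{J,K}(x,\,t\cdot y)=t^{\,w-w^{K}}\,f_{J,K}(x,y),\qquad t\in\mathbb{R},
\]
for each pair $(J,K)$.

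Next I would analyze these relations. If $w-w^{K}<0$, the left-hand side is continuous in $t$ at $t=0$ while the right-hand side is unbounded as $t\to 0^{+}$ unless $f_{J,K}(x,y)=0$; hence $f_{J,K}\equiv 0$ in that case. If $w-w^{K}\ge 0$, then for every fixed value of $x$ in the body of $\mathcal{U}$ the function $y\mapsto f_{J,K}(x,y)$ is homogeneous of weight $w-w^{K}$ on the graded space $\mathbb{R}^{\mathbf{d}_{\bar{0}}}$ in the sense of Definition~\ref{def:hgm_real}, so by Lemma~\ref{lem:hol_hmg_fun_R} it is a polynomial in the $y^{a}$ homogeneous of weight $w-w^{K}$; thus $f_{J,K}(x,y)=\sum_{\beta}c_{\beta}^{J,K}(x)\,y^{\beta}$, the sum taken over multi-indices $\beta$ with $\sum_{a}\beta_{a}w^{a}=w-w^{K}$, and the coefficients $c_{\beta}^{J,K}(x)=\tfrac{1}{\beta!}\,\partial_{y}^{\beta}f_{J,K}(x,0)$ are smooth in $x$ because $f_{J,K}$ is. Substituting back and collecting the $\theta$'s into the coefficients yields
\[
 f=\sum_{K,\beta}\Big(\sum_{J}c_{\beta}^{J,K}(x)\,\theta^{J}\Big)\,y^{\beta}\xi^{K},
\]
which displays $f$ as a polynomial in the graded fiber coordinates $(y^{a},\xi^{A})$ with coefficients in $\mathcal{O}_{\mathcal{U}}$; since each surviving monomial $y^{\beta}\xi^{K}$ has weight $\sum_{a}\beta_{a}w^{a}+w^{K}=w$, this polynomial is homogeneous of weight $w$, which is exactly the assertion.

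The routine parts here are the expansion of a superfunction into odd monomials with smooth even coefficients and the bookkeeping with the weights. The one place where something genuine happens is the passage from ``$f_{J,K}$ is homogeneous'' to ``$f_{J,K}$ is polynomial'': this is precisely where the classical Lemma~\ref{lem:hol_hmg_fun_R} is invoked, applied fibrewise with the base point $x$ treated as a smooth parameter (smoothness of the resulting coefficients being automatic from Taylor's formula), together with the elementary blow-up argument that kills the coefficients with $w^{K}>w$. I expect this to be the main conceptual point; everything else is formal manipulation inside the structure sheaf.
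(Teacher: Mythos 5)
Your argument is correct and follows essentially the same route as the paper's proof: expand $f$ in the odd monomials $\theta^{J}\xi^{K}$ with smooth even coefficients, match monomials under $h_{t}^{*}$, and invoke the purely even Lemma~\ref{lem:hol_hmg_fun_R} to conclude that each coefficient is polynomial in the $y^{a}$. You merely spell out two points the paper leaves implicit (that coefficients of would-be negative weight vanish, and that the resulting polynomial coefficients depend smoothly on the base point), so there is nothing to add.
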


\begin{proof} This follows directly from a corresponding result for purely
even graded bundles. Indeed, let $f\in\mathcal{C}^{\infty}(x,
y^{a}_{w})[\theta, \xi^{A}]$ be an even or odd, homogeneous function
on $\mathcal{U}\times\mathbb{R}^{\mathbf{d}}$:
\[
f=\sum_{I, J} f_{I, J}(x, y^{a}) \xi^{A_{1}} \ldots\xi^{A_{i}}
\theta^{B_{1}} \ldots\theta^{B_{j}},
\]
where $(y^{a}, \xi^{A})$ are graded coordinates on $\mathbb
{R}^{\mathbf{d}}$, and the summation goes over sequences
$I = \{A_{1}<\ldots<A_{i}\}$, $J = \{B_{1}<\ldots<B_{j}\}$. Then
\[
h_{t}^{*}f = \sum_{I, J} f_{I, J}(x, t^{\mathbf{w}(a)} y^{a})
t^{\mathbf{w}(A_{1})+\ldots+\mathbf{w}(A_{i})} \xi^{A_{1}} \ldots
\xi^{A_{i}} \theta^{B_{1}} \ldots\theta^{B_{j}},
\]
so $h_{t}^{*}f = t^{w} f$ implies that the coefficients $f_{I, J}$ are
real functions of weight
$w - (\mathbf{w}(A_{1})+\ldots\mathbf{w}(A_{i}))\geq0$, thus
polynomials in $y^{a}$.
\end{proof}

In what follows we will need the following technical result which
allows to construct homogeneous coordinates under mild technical conditions.
%
\begin{lemma}\label{lem:better_coord_for_h} Consider a superdomain
$\mathcal{M}= U\times\Pi\mathbb{R}^{s}$ with $U\subset\mathbb
{R}^{r}$ being an open set and introduce super coordinates
$(y^{1},\ldots,y^{r},\xi^{i},\ldots,\xi^{s})$ on $\mathcal{M}$,
i.e. $y$'s are even and $\xi$'s are odd coordinates on $\mathcal{M}$.
Let $h$ be an action of the monoid $(\mathbb{R}, \cdot)$ on $\mathcal
{M}$ such that
\[
h_{t}^{*}(y^{a})= t^{\mathbf{w}(a)}\,y^{a} + o(\mathcal{J}_{\mathcal
{M}}), \quad\text{and}\quad h_{t}^{*}(\xi^{i}) = t^{\mathbf{w}(i)}\,
\xi^{i} + o(\mathcal{J}_{\mathcal{M}}^{2})\ .
\]
Then
\[
\left( \frac{1}{\mathbf{w}(a)!} \left.\frac{\mathrm{d}^{\mathbf
{w}(a)}}{\mathrm{d}t^{\mathbf{w}(a)}}\right|_{t=0} h_{t}^{*}(y^{a}),
\frac{1}{\mathbf{w}(i)!} \left.\frac{\mathrm{d}^{\mathbf
{w}(i)}}{\mathrm{d}t^{\mathbf{w}(i)}}\right|_{t=0} h_{t}^{*}(\xi
^{i})\right)
\]
are graded coordinates on the superdomain $\mathcal{M}$.
\end{lemma}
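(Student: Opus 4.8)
The plan is to show that the proposed functions, which I will denote by $\widetilde{y}^a$ and $\widetilde{\xi}^i$, differ from the original coordinates $y^a,\xi^i$ only by nilpotent corrections of sufficiently high order, so that the change of coordinates is invertible, and that they are genuinely homogeneous of the prescribed weights. First I would observe that for a local function $f\in\mathcal{O}_{\mathcal{M}}(U)$ the map $t\mapsto h_t^*(f)$ is smooth in $t$, so the derivatives $\frac{\mathrm{d}^k}{\mathrm{d}t^k}\big|_{t=0}h_t^*(f)$ make sense; moreover from the cocycle identity $h_{ts}^*=h_t^*\circ h_s^*$ one gets, by differentiating $k$ times in $t$ at $t=0$ and using the chain rule (Faà di Bruno), that the function $g:=\frac{1}{k!}\frac{\mathrm{d}^k}{\mathrm{d}t^k}\big|_{t=0}h_t^*(f)$ satisfies $h_s^*(g)=s^k g$, i.e. $g$ is homogeneous of weight $k$. (This is the super analogue of the standard fact from the proof of Theorem~\ref{thm:eqiv_real}; the argument is purely formal and insensitive to parity.) Applying this with $f=y^a$, $k=\mathbf{w}(a)$ and with $f=\xi^i$, $k=\mathbf{w}(i)$ shows that $\widetilde{y}^a$ is homogeneous of weight $\mathbf{w}(a)$ and $\widetilde{\xi}^i$ is homogeneous of weight $\mathbf{w}(i)$. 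Parity is preserved since $h_t^*$ is an even algebra map, so $\widetilde{y}^a$ is even and $\widetilde{\xi}^i$ is odd.

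Next I would check that $(\widetilde{y}^a,\widetilde{\xi}^i)$ is actually a coordinate system. By the hypothesis $h_t^*(y^a)=t^{\mathbf{w}(a)}y^a+o(\mathcal{J}_{\mathcal{M}})$, Taylor-expanding the smooth function $t\mapsto h_t^*(y^a)$ and picking out the coefficient of $t^{\mathbf{w}(a)}$ gives $\widetilde{y}^a=y^a+(\text{term in }\mathcal{J}_{\mathcal{M}})$; note the correction term is a homogeneous function of weight $\mathbf{w}(a)$ lying in the nilpotent ideal, hence a polynomial in the $\xi$'s and the positively-graded $y$'s with coefficients vanishing appropriately—in any case it carries no constant or linear-in-$\widetilde{y}$ part that could spoil invertibility. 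Similarly $h_t^*(\xi^i)=t^{\mathbf{w}(i)}\xi^i+o(\mathcal{J}_{\mathcal{M}}^2)$ yields $\widetilde{\xi}^i=\xi^i+(\text{term in }\mathcal{J}_{\mathcal{M}}^2)$; since $\mathcal{J}_{\mathcal{M}}^2$ consists of functions that are at least quadratic in the odd generators (after reducing mod the even nilpotents, which are absent here as $U$ is an honest open set), the correction is quadratic-or-higher in the $\xi$'s. Therefore the Jacobian of the transformation $(y,\xi)\mapsto(\widetilde{y},\widetilde{\xi})$, evaluated on the reduced manifold, is the identity, and the odd-odd block is likewise unipotent; by the inverse function theorem for supermanifolds the transformation is an isomorphism, so $(\widetilde{y}^a,\widetilde{\xi}^i)$ is a valid super coordinate system on $\mathcal{M}$.

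The main obstacle, and the step requiring the most care, is the bookkeeping of where the correction terms live: one must verify that the weight-$\mathbf{w}(a)$ homogeneous part extracted from $h_t^*(y^a)$ really reduces to $y^a$ modulo terms that do not obstruct invertibility, and—more delicately—that the condition ``$o(\mathcal{J}_{\mathcal{M}}^2)$'' on the odd coordinates is exactly what is needed so that $\widetilde{\xi}^i\equiv\xi^i$ modulo functions at least quadratic in the odd variables (a correction of order only $\mathcal{J}_{\mathcal{M}}$, i.e. possibly linear in another $\xi^j$, would still be fine for invertibility here, but the sharper hypothesis is what will be available when this lemma is invoked). I would make this precise by writing $\mathcal{O}_{\mathcal{M}}(U)=\mathcal{C}^\infty(U)\otimes\Lambda(\xi^1,\dots,\xi^s)$, noting $\mathcal{J}_{\mathcal{M}}(U)=\mathcal{C}^\infty(U)\otimes\Lambda^{\geq 1}$ and $\mathcal{J}_{\mathcal{M}}(U)^2=\mathcal{C}^\infty(U)\otimes\Lambda^{\geq 2}$, and then reading off the homogeneous components directly. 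Once the linear-algebra structure of the leading part of the coordinate change is identified as unipotent, invertibility is automatic and the proof concludes.
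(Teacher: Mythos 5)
Your proposal is correct and follows essentially the same route as the paper's proof: extract the weight-$k$ Taylor coefficient $f^{[k]}=\frac{1}{k!}\left.\frac{\mathrm{d}^k}{\mathrm{d}t^k}\right|_{t=0}h_t^*f$, show it is homogeneous because $h_s^*\circ h_t^*=h_{st}^*$ commutes with differentiation at $t=0$, and conclude that the resulting coordinate change is identity-plus-nilpotent (respectively, unipotent in the odd block) and hence invertible. The one inaccuracy is your parenthetical claim that a correction to $\xi^i$ merely in $\mathcal{J}_{\mathcal{M}}$ (i.e.\ possibly linear in the other $\xi^j$) would still be fine for invertibility --- in general it would not, since the odd--odd block $\delta^i_j+c^i_j$ could be singular --- but as you actually use the stated $o(\mathcal{J}_{\mathcal{M}}^{2})$ hypothesis, this aside does not affect the argument.
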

\begin{proof} We remark that if
\[
h_{t}^{*} f = \sum_{I\subset\{1, \ldots, s\}} g_{I}(t, y^{1}, \ldots
, y^{r}) \xi^{I} \in\mathcal{C}^{\infty}(\mathbb{R}\times U)[\xi
^{1}, \ldots, \xi^{s}]
\]
is a function on $\mathcal{M}$, then the function $f^{[k]}:=\frac
{1}{k!} \left.\frac{\mathrm{d}^k}{\mathrm{d}t^k}\right|_{t=0}
h_{t}^{*} f$ is well-defined as $h$ is smooth and is given by
\[
f^{[k]} = \frac{1}{k!} \sum_{I\subset\{1, \ldots, s\}}\xi^{I}\,
\left.\frac{\mathrm{d}^k}{\mathrm{d}t^k}\right|_{t=0} g_{I}(t,
y^{1}, \ldots, y^{r}) \in\mathcal{C}^{\infty}(U)[\xi^{1}, \ldots,
\xi^{s}].
\]
Now, since for any morphism $\phi:\mathcal{M}\rightarrow\mathcal
{M}$, $(\operatorname{id}_{\mathbb{R}}\times\phi)^{*}: \mathcal
{O}_{\mathbb{R}\times\mathcal{M}}\rightarrow\mathcal{O}_{\mathbb
{R}\times\mathcal{M}}$ commutes with the operators $\left.\frac
{\mathrm{d}^k}{\mathrm{d}t^k}\right|_{t=0}: \mathcal{O}_{\mathbb
{R}\times\mathcal{M}}\rightarrow\mathcal{O}_{\mathbb{R}\times
\mathcal{M}}$, we get
\[
h_{s}^{*} f^{[k]} = \frac{1}{k!} \left.\frac{\mathrm{d}^k}{\mathrm
{d}t^k}\right|_{t=0} h^{*}_{s} h^{*}_{t} f = \frac{1}{k!} \left
.\frac{\mathrm{d}^k}{\mathrm{d}t^k}\right|_{t=0} h^{*}_{st} f =
s^{k} \left.\frac{\mathrm{d}^k}{\mathrm{d}t^k}\right|_{t=0}
h^{*}_{t} f = s^{k}\, f^{[k]}\ ,
\]
that is, $f^{[k]}$ is $h$-homogeneous of weight $k$. In particular,
\[
(y^{a})^{[\mathbf{w}(a)]} = y^{a} + o(\mathcal{J}_{\mathcal{M}})
\quad\text{and}\quad(\xi^{i})^{[\mathbf{w}(i)]} = \xi^{i} +
o(\mathcal{J}_{\mathcal{M}}^{2}),
\]
are homogeneous with respect to $h$. To prove that these are true
coordinates on $\mathcal{M}$ observe that the matrices $\left( \frac
{\partial(y^{a})^{[\mathbf{w}(a)]}}{\partial{y^{b}}}\right) $ and
$\left( \frac{\partial(\xi^{i})^{[\mathbf{w}(i)]}}{\partial\xi
^{j}}\right) $ are invertible, so the result follows. 
\end{proof}

\paragraph*{The main result} We are now ready to prove that \reftext{Theorem~\ref{thm:eqiv_real}} generalizes to the supergeometric context.

\begin{theorem}\label{thm:main_super}
The categories of super graded bundles (with connected bodies) and
homogeneity structures on supermanifolds (with connected bodies) are
equivalent. At the level of objects this equivalence is provided by the
following two constructions
\begin{itemize}
\item With every super graded bundle $\pi:\mathcal{E}\rightarrow
\mathcal{M}$ one can associate a homogeneity structure $(\mathcal
{E},h^{\mathcal{E}})$, where $h^{\mathcal{E}}$ is the action by the
homotheties of $\mathcal{E}$.
\item Given a homogeneity structure $(\mathcal{M},h)$ on a
supermanifold $\mathcal{M}$, the map $h_{0}:\mathcal{M}\rightarrow
\mathcal{M}_{0}:=h_{0}(\mathcal{M})$ provides $\mathcal{M}$ with a
canonical structure of a super graded bundle such that $h$ is the related action by homotheties.
\end{itemize}
At the level of morphisms: every morphism of super graded bundles is a
morphism of the related homogeneity structures and, conversely, every
morphism of homogeneity structures on supermanifolds respects the
canonical super graded bundle structures.
\end{theorem}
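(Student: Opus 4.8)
The plan is to follow the strategy of the holomorphic case (proof of Theorem~\ref{thm:equiv_holom}), but with the ``global'' jet-bundle embedding replaced by a local construction built on Lemmas~\ref{lem:super_morphisms} and~\ref{lem:better_coord_for_h}. The passage from a super graded bundle to a homogeneity structure is the action by homotheties and has already been recorded in Remark~\ref{rem:super_hgm_structure}, together with the fact that morphisms of super graded bundles become morphisms of homogeneity structures; the same remark notes that a morphism of super graded bundles visibly intertwines homotheties. So the substance of the theorem is to show that a homogeneity structure $h:\mathbb{R}\times\mathcal{M}\to\mathcal{M}$ canonically turns $h_0:\mathcal{M}\to\mathcal{M}_0:=h_0(\mathcal{M})$ into a super graded bundle whose homotheties are $h$, that this construction inverts the previous one, and that it sends intertwining morphisms to morphisms of super graded bundles.

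First I would reduce to the body and localize. By Remark~\ref{rem:super_hgm_structure} the reduced action $\underline{h}$ makes the (connected) body $|\mathcal{M}|$ a graded bundle $\underline{h_0}:|\mathcal{M}|\to|\mathcal{M}_0|$ in the sense of Theorem~\ref{thm:eqiv_real}; denote its degree by $k$. Since $h_0$ is an idempotent morphism of supermanifolds, $\mathcal{M}_0$ is a sub-supermanifold of $\mathcal{M}$ and $h_0$ a retraction onto it (the supergeometric version of Theorem~1.13 in \cite{Kolar_Michor_Slovak_nat_oper_diff_geom_1993}, proved along the same lines; alternatively this will drop out of the local model below). As the whole construction is local over the base and $\underline{h_t}$ preserves $\underline{h_0}^{-1}(V)$ for $V\subset|\mathcal{M}_0|$, it suffices to equip a superdomain $\mathcal{U}$ of the form $\mathcal{M}|_{\underline{h_0}^{-1}(V)}$, carrying the restricted homogeneity structure, with graded fiber coordinates over $V$. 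On $\mathcal{U}$ I would first produce coordinates meeting the hypotheses of Lemma~\ref{lem:better_coord_for_h}. For the even directions: Theorem~\ref{thm:eqiv_real} gives body coordinates split into weight-$0$ coordinates $x^i$ pulled back from $|\mathcal{M}_0|$ and fiber coordinates $y^a_w$ of positive weights $w$; choosing even functions on $\mathcal{M}$ reducing to these yields even coordinates with $h_t^{*}x^i=x^i+o(\mathcal{J}_\mathcal{M})$ and $h_t^{*}y^a_w=t^{w}y^a_w+o(\mathcal{J}_\mathcal{M})$. For the odd directions the weights are read off the action induced by $h$ on the rank-$s$ vector bundle over $|\mathcal{M}|$ associated with $\mathcal{J}_\mathcal{M}/\mathcal{J}_\mathcal{M}^{2}$: $h$ induces a smooth $(\mathbb{R},\cdot)$-action on this bundle by vector-bundle morphisms covering $\underline{h}$, so (by the vector-space analysis over a fixed point of $|\mathcal{M}_0|$, extended over a neighbourhood, or by Theorem~\ref{thm:eqiv_real} in the relative setting) it decomposes into subbundles of pure non-negative integral weights, with multiplicities constant on the connected body. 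Lifting a weighted frame to odd functions $\xi^j\in\mathcal{J}_\mathcal{M}(\mathcal{U})$ gives odd coordinates with $h_t^{*}\xi^j=t^{\mathbf{w}(j)}\xi^j+o(\mathcal{J}_\mathcal{M}^{2})$. Together with the weight-$0$ odd coordinates $\theta^\alpha$ pulled back from $\mathcal{M}_0$, the system $(x^i,\theta^\alpha,y^a_w,\xi^j)$ satisfies the assumptions of Lemma~\ref{lem:better_coord_for_h}.

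Applying Lemma~\ref{lem:better_coord_for_h} replaces these by honest coordinates on $\mathcal{U}$ that are now \emph{exactly} homogeneous of the prescribed weights; in them $h$ acts precisely as the homothety action of $V\times\mathbb{R}^{\mathbf{d}}$, where $\mathbf{d}=(\mathbf{d}_{\bar0}|\mathbf{d}_{\bar1})$ records the numbers of even/odd coordinates of each positive weight, and $\mathcal{M}_0$ is locally cut out by the vanishing of the positive-weight coordinates. Thus $h_0$ is locally trivial as a super graded bundle. On an overlap of two such trivializations, each positive-weight fiber coordinate of one chart is a homogeneous function, hence by Lemma~\ref{lem:super_morphisms} a polynomial in the graded fiber coordinates of the other chart that is homogeneous of the same weight and of the same parity — exactly the form of transition functions demanded in Definition~\ref{def:s_grd_bndl}. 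Hence $h_0:\mathcal{M}\to\mathcal{M}_0$ is a super graded bundle whose homotheties reproduce $h$, and the two object-level assignments are mutually inverse. For morphisms: if $\Phi:\mathcal{M}_1\to\mathcal{M}_2$ intertwines $h^1,h^2$, then $\Phi\circ h^1_0=h^2_0\circ\Phi$ shows $\Phi$ maps $\mathcal{M}_{1,0}$ to $\mathcal{M}_{2,0}$ and covers a morphism of bases, while pulling back graded fiber coordinates of $\mathcal{M}_2$ gives homogeneous functions on $\mathcal{M}_1$, which by Lemma~\ref{lem:super_morphisms} are fiberwise polynomials of matching weight and parity in the graded coordinates of $\mathcal{M}_1$; so $\Phi$ is a morphism of super graded bundles, and conversely. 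This gives the asserted equivalence of categories.

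I expect the main obstacle to be the odd-coordinate part of the second paragraph: identifying the correct (integral, non-negative) weights of the odd directions through the induced $(\mathbb{R},\cdot)$-action on $\mathcal{J}_\mathcal{M}/\mathcal{J}_\mathcal{M}^{2}$ and arranging coordinates that meet the precise $o(\mathcal{J}_\mathcal{M})$ and $o(\mathcal{J}_\mathcal{M}^{2})$ tolerances required to invoke Lemma~\ref{lem:better_coord_for_h}. Once this is in place, Lemmas~\ref{lem:super_morphisms} and~\ref{lem:better_coord_for_h} do the remaining work essentially formally, and the ancillary point that $\mathcal{M}_0$ is a sub-supermanifold is comparatively routine (it is the super analogue of the idempotent-image argument, and in any case follows from the local normal form).
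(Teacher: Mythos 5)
Your proposal is correct and follows essentially the same route as the paper: localize over the base, produce coordinates that are homogeneous modulo $\mathcal{J}_{\mathcal{M}}$ (even) and modulo $\mathcal{J}_{\mathcal{M}}^{2}$ (odd), correct them to exactly homogeneous ones with Lemma~\ref{lem:better_coord_for_h}, and handle transition functions and morphisms with Lemma~\ref{lem:super_morphisms}. The only divergence is in how the approximate odd weights are produced: the paper writes $\mathcal{M}=\Pi E$ locally, transfers $h$ to a genuine $(\mathbb{R},\cdot)$-action $\widetilde{h}$ on the real total space $E$ commuting with the fiberwise homotheties $H$, and invokes the double graded bundle theorem of \cite{JG_MR_gr_bund_hgm_str_2011} to obtain bi-homogeneous linear coordinates $\widetilde{Y}^{A}_{w}=\gamma^{A}_{B}(x,y)Y^{B}$, which are then mimicked on the odd generators as $\widetilde{\xi}^{A}=\gamma^{A}_{B}\xi^{B}$ --- your weight decomposition of the induced action on $\mathcal{J}_{\mathcal{M}}/\mathcal{J}_{\mathcal{M}}^{2}$ is the same device, and that double-homogeneity-structure theorem is the precise citation that makes your ``decomposes into subbundles of pure non-negative integral weights'' step rigorous over a neighbourhood rather than a single point.
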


\begin{proof}
The crucial part of the proof is to show that given a homogeneity
structure $h:\mathbb{R}\times\mathcal{M}\rightarrow\mathcal{M}$ on
a supermanifold $\mathcal{M}$ one can always find an atlas with
homogeneous coordinates on $\mathcal{M}$. First we observe that without
loss of generality we may assume that $\mathcal{M}$ has a simple form,
namely $\mathcal{M}$ is isomorphic with $U\times\mathbb{R}^{\mathbf
{d}}\times\Pi\mathbb{R}^{q}$ for some small open subset 
$U\subset\mathbb{R}^{n}$, i.e. $\mathcal{M}$ has a second, other than $h$,
homogeneity structure associated with a vector bundle $E= U\times
\mathbb{R}^{\mathbf{d}}\times\mathbb{R}^{q}\to U\times\mathbb
{R}^{\mathbf{d}}$. Using the fact that these graded bundle structures
are compatible, and transferring the homogeneity structure $h$ to the
real manifold $E$ (with some loss of information) we are able to
construct graded coordinates for $\mathcal{M}$ but modulo $\mathcal
{J}_{\mathcal{M}}^{2}$. Then we evoke \reftext{Lemma~\ref{lem:better_coord_for_h}} to finish the proof.

Assume that $h:\mathbb{R}\times\mathcal{M}\rightarrow\mathcal{M}$
is a homogeneity structure on a supermanifold $\mathcal{M}$.
Recall (see \reftext{Remark~\ref{rem:super_hgm_structure}}) that $h$ induces a
canonical homogeneity structure $\underline{h}$ on the body $|\mathcal{M}|$.
Since we work locally we may assume without any loss of generality that
$|\mathcal{M}|_{0}:= \underline{h}_{0}(|\mathcal{M}|)$ is an open
contractible subset $U\subset\mathbb{R}^{n}$, and $|\mathcal{M}| =
U\times\mathbb{R}^{\mathbf{d}}$ is a trivial graded bundle over $U$
of rank $\mathbf{d}=(d_{1}, \ldots, d_{k})$. Thus
we may assume that $\mathcal{M}= \Pi E$ where $E=U\times\mathbb
{R}^{\mathbf{d}}\times\mathbb{R}^{q}$ is a trivial vector bundle
over $|\mathcal{M}| = U\times\mathbb{R}^{\mathbf{d}}$ with the
typical fiber $\mathbb{R}^{q}$. Note, that we do not need to refer to
Batchelor's theorem \cite{Gaw_77,Batchelor_str_sMnflds} and
the argument works even for holomorphic actions of the monoid $(\mathbb
{C}, \cdot)$ on complex supermanifolds (see \reftext{Remark~\ref{rem:complex_sMnflds}}).

Consider now local coordinates $(x^{i}, y^{a}_{w}, Y^{A})$ on $E$ where
$(x^{i},y^{a}_{w})$ are graded coordinates on the base and $Y^{A}$ are
linear coordinates on fibers. Let $(\xi^{A})$ be odd coordinates on
$\Pi E$ corresponding to $(Y^{A})$.
Recall that $\mathcal{J}_{\mathcal{M}}(|\mathcal{M}|)=\langle\xi
^{A}\rangle$ denotes
the nilpotent radical of $\mathcal{O}_{\mathcal{M}}(|\mathcal{M}|)$.
Since $(x^{i},y^{a}_{w})$ are graded coordinates with respect to
$\underline{h}$ and since $h_{t}$ respects the parity for each $t\in
\mathbb{R}$, the general form of $h_{t}$ must be
%
\begin{equation}\label{eqn:h_t_form}
\begin{cases}
h_{t}^{*}(x^{i}) &= x^{i} + o(\mathcal{J}_{\mathcal{M}}^{2}),\\
h_{t}^{*}(y^{a}_{w}) &= t^{w} \,y^{a}_{w} + o(\mathcal{J}_{\mathcal
{M}}^{2}),\\
h_{t}^{*}(\xi^{A}) &= \alpha^{A}_{B}(t,x^{i},y^{a}_{w}) \xi^{B} +
o(\mathcal{J}_{\mathcal{M}}^{2}),
\end{cases}
\end{equation}
where $\alpha^{A}_{B}$ are smooth functions.

The action $h$ defines an action $\widetilde{h}$ of the monoid
$(\mathbb{R}, \cdot)$ on $E$ which is given by
%
\begin{equation}\label{eqn:action_h_tilde}
\widetilde{h}_{t}^{*}(x^{i})=x^{i}, \quad\widetilde
{h}_{t}^{*}(y^{a})= t^{w(y^{a})} \,y^{a}, \quad\text{and}\quad
\widetilde{h}_{t}^{*}(Y^{A}) = \alpha^{A}_{B}(t,x,y) Y^{B}.
\end{equation}
Indeed, by reducing $h_{t}^{*}: \mathcal{O}_{\mathcal{M}}(|\mathcal
{M}|) \rightarrow\mathcal{O}_{\mathcal{M}}(|\mathcal{M}|)$ modulo
$\mathcal{J}_{\mathcal{M}}^{2}(|\mathcal{M}|)$ we obtain an
endomorphism of $\mathcal{O}_{\mathcal{M}}(|\mathcal{M}|)/\mathcal
{J}_{\mathcal{M}}^{2}(|M|) = \mathcal{C}^{\infty}(x^{i}, y^{a}_{w})
\oplus\xi^{A} \cdot\mathcal{C}^{\infty}(x^{i}, y^{a}_{w}) \simeq
\mathcal{C}^{\infty}(x^{i}, y^{a}_{w}) \oplus Y^{A} \cdot\mathcal
{C}^{\infty}(x^{i}, y^{a}_{w})$, thus $\widetilde{h}_{t}\circ
\widetilde{h}_{s} = \widetilde{h}_{ts}$ and $\widetilde{h}_{t}$ does
not depend on a particular choice of linear coordinates $Y^{A}$ on $E$.
It follows from \reftext{Theorem~\ref{thm:eqiv_real}} that $E$ is a graded
bundle over $E_{0}:=\widetilde{h}_{0}(E)$, whose homotheties coincide
with the maps $\widetilde{h}_{t}$. Note that the inclusions $U\times\{
0\} \times\{0\} \subset E_{0} \subset U\times\{0\}\times\mathbb
{R}^{q}$ can be proper.

Our goal now is to find graded coordinates on $E$ out of non-homogeneous
coordinates $(x^{i}, y^{a}, Y^{A})$ and then mimic the same changes of
coordinates in order to define a graded coordinate system on the
supermanifold $\mathcal{M}= \Pi E$ out of a non-homogeneous one
$(x^{i}, y^{a}, \xi^{A})$.

Denote by $H$ the homotheties related with the vector bundle structure
on $\tau: E \rightarrow|\mathcal{M}|=U\times\mathbb{R}^{\mathbf
{d}}$. A~fundamental observation that follows from \reftext{\eqref{eqn:action_h_tilde}} is that the actions $H$ and $\widetilde{h}$
commute, i.e.,
\[
H_{s} \circ\widetilde{h}_{t} = \widetilde{h}_{t}\circ H_{s}
\]
for every $t,s\in\mathbb{R}$.
Thus $(E, \widetilde{h}, H)$ is a \emph{double homogeneity structure}
and, by Theorem 5.1 of \cite{JG_MR_gr_bund_hgm_str_2011}, a double
graded bundle:
\[
\xymatrix{
E=U\times\mathbb{R}^{\mathbf{d}}\times\mathbb{R}^q \ar
[d]^{\widetilde{h}_0}\ar[rr]^{H_0} && U\times\mathbb{R}^{\mathbf{d}}
\ar[d]^{\widetilde{h}_0|_{U\times\mathbb{R}^{\mathbf{d}}}} \\
E_0 \ar[rr]^{H_0|_{E_0}} && U.
}
\]
Moreover, the above-mentioned result implies that we can complete
graded coordinates $(x^{i}, y^{a}_{w})$ on $U\times\mathbb
{R}^{\mathbf{d}}$ which are constant along fibers of the projection
$H_{0}$ with graded coordinates $\tilde{Y}^{A}_{w}$ of bi-weight $(w,
1)$, where $0\leq w\leq k$ so that $(x^{i}, y^{a}_{w}, \widetilde
{Y}^{A}_{w})$ is a system of bi-graded coordinates for $(E, \widetilde
{h}, H)$.

Since both $(Y^{A})$ and $(\widetilde{Y}^{A}_{w})$ are linear
coordinates for the vector bundle $H_{0}:E\rightarrow U\times\mathbb
{R}^{\mathbf{d}}$ they are related~by
%
\begin{equation}\label{eqn:tilde_Y_A}
\widetilde{Y}^{A}_{w} = \gamma^{A}_{B}(x, y) Y^{B}
\end{equation}
for some functions $\gamma^{A}_{B}$ on $U\times\mathbb{R}^{d}$.

Let us define $\widetilde{\xi}^{A}:= \gamma^{A}_{B}(x,y)\cdot\xi
^{B}$, i.e. using the same functions as in \reftext{\eqref{eqn:tilde_Y_A}}. By
applying $\widetilde{h}_{t}^{*}$ to \reftext{\eqref{eqn:tilde_Y_A}} we get
\[
t^{\mathbf{w}(A)} \gamma^{A}_{C}(x, y)Y^{C}
\overset{\text{\reftext{\eqref{eqn:tilde_Y_A}}}}{=} t^{\mathbf{w}(A)} \widetilde{Y}_{w}^{A} =
\widetilde{h}_{t}^{\ast}(\widetilde{Y}_{w}^{A}) \overset{\text{\reftext{\eqref{eqn:tilde_Y_A}}}}{=}\widetilde{h}_{t}^{*}(\gamma^{A}_{B}(x, y))
\widetilde{h}_{t}^{*}(Y^{B}) \overset{\text{\reftext{\eqref{eqn:action_h_tilde}}}}{=}
\gamma^{A}_{B}(x, t^{w} y^{a}_{w})\alpha^{B}_{C}(t, x, y) Y^{C},
\]
hence
$t^{\mathbf{w}(A)} \gamma^{A}_{C} = \gamma^{A}_{B}(x, t^{w}
y^{a}_{w})\alpha^{B}_{C}(t, x, y)$, and
%
\begin{equation}
\begin{aligned}[c]
h_{t}^{*}(\widetilde{\xi}^{A})&=h_{t}^{*}(\gamma
^{A}_{B})h_{t}^{*}(\xi^{B}) \overset{\text{\reftext{\eqref{eqn:h_t_form}}}}{=} \gamma
^{A}_{B}(x, t^{w} y^{a}_{w} + o(\mathcal{J}_{\mathcal
{M}}^{2}))(\alpha^{B}_{C}(t, x, y) \xi^{C} + o(\mathcal{J}_{\mathcal
{M}}^{2})) = \\
&= \gamma^{A}_{B}(x, t^{w} y^{a}_{w}) \alpha^{B}_{C}(t, x, y)\xi^{C}
+ o(\mathcal{J}_{\mathcal{M}}^{2}) = t^{\mathbf{w}(A)} \gamma
^{A}_{C}(x, y) \xi^{C} + o(\mathcal{J}_{\mathcal{M}}^{2}) = \\
&= t^{\mathbf{w}(A)} \widetilde{\xi}^{A} + o(\mathcal{J}_{\mathcal
{M}}^{2}).
\end{aligned}
\end{equation}
We obtain a graded coordinate system for $\mathcal{M}$ due to
\reftext{Lemma~\ref{lem:better_coord_for_h}}.

The equivalence at the level of morphism follows directly from \reftext{Lemma~\ref{lem:super_morphisms}}. This result implies that locally any
supermanifold morphism respecting the homogeneity structures is a
homogeneous polynomial in graded coordinates, i.e. it is a morphism of
the related super graded bundles (cf. \reftext{Definition~\ref{def:s_grd_bndl}}).
\end{proof}

\begin{remmark}\label{rem:complex_sMnflds} Using the same methods one
can prove an analog of above result for holomorphic supermanifolds (a
super-version of complex manifolds): a holomorphic action of $(\mathbb
{C},\cdot)$ on a holomorphic supermanifold $\mathcal{M}$ gives rise
to a graded holomorphic super coordinate system for $\mathcal{M}$.
Indeed, the proof of \reftext{Lemma~\ref{lem:better_coord_for_h}} can be
rewritten in a holomorphic setting. The other result we need to
complete the proof of \reftext{Theorem~\ref{thm:main_super}} in the holomorphic
context is that two holomorphic commuting $(\mathbb{C}, \cdot)$
actions on a complex manifold $M$ give rise to $\mathbb{N}_{0}\times
\mathbb{N}_{0}$ graded coordinate system on $M$ (an analog of Theorem~5.1
\cite{JG_MR_gr_bund_hgm_str_2011}). This can be justified using a
double graded version of \reftext{Lemma~\ref{lem:complex_gr_sspace}} and the
fact that $M$ can be considered as a substructure of $\mathrm
{J}^{r}\mathrm{J}^{s} M$ for some $r$, $s$ (a double holomorphic
homogeneity substructure). Details are left to the Reader.
\end{remmark}

\section*{Acknowledgments}
This research was supported by the {Polish National Science Centre} grant
under the contract number {DEC-2012/06/A/ST1/00256}.

The question of characterizing the actions of the multiplicative monoid
$(\mathbb{C},\cdot)$ occurred during the discussion between
Professors Stanis{\l}aw L. Woronowicz and Janusz Grabowski at the
seminar on the results of \cite{JG_MR_gr_bund_hgm_str_2011}. The
problem of characterizing $\mathcal{G}_{k}$-actions was originally
posted by Professor Janusz Grabowski. We would like to thank them for
the inspiration and encouragement to undertake this research.

\bibliographystyle{amsalpha}
\bibliography{biblMonoids}
\end{document}